\documentclass[11pt]{article}

\usepackage[T1]{fontenc}
\usepackage{amsmath,amssymb,amsthm,mathtools}
\usepackage[a4paper,margin=30mm]{geometry}
\usepackage[colorlinks=true,allcolors=blue,hypertexnames=false]{hyperref}

\newtheorem{theorem}{Theorem}[section]
\newtheorem{lemma}[theorem]{Lemma}
\newtheorem{proposition}[theorem]{Proposition}
\newtheorem{corollary}[theorem]{Corollary}
\theoremstyle{definition}
\newtheorem{definition}[theorem]{Definition}
\newtheorem{example}[theorem]{Example}

\newtheorem*{openproblem}{Open problem}
\theoremstyle{remark}
\newtheorem{remark}[theorem]{Remark}

\newcommand{\T}{\mathbb T}
\newcommand{\C}{\mathbb C}
\newcommand{\R}{\mathbb R}
\newcommand{\Z}{\mathbb Z}
\newcommand{\N}{\mathbb N}
\newcommand{\Q}{\mathbb Q}
\newcommand{\CP}{\mathbb{CP}^1}
\newcommand{\eu}{\mathrm{e}}
\newcommand{\ii}{\mathrm{i}}
\newcommand{\co}{\mathfrak{m}}
\DeclareMathOperator{\Pic}{Pic}
\newcommand{\Proj}[1]{\mathbb{P}(#1)}

\DeclareMathOperator{\adj}{adj}
\DeclareMathOperator{\diag}{diag}
\newcommand{\ip}[2]{\langle #1,\, #2\rangle}
\newcommand{\norm}[1]{\lVert #1\rVert}
\newcommand{\abs}[1]{\lvert #1\rvert}

\title{A Chern-Class Obstruction to Measurable Eigensections\\
of Analytic Quasi-Periodic Bundle Cocycles}
\author{Ahmadreza Azimifard}
\date{August 2026}
\hypersetup{pdfauthor={Ahmadreza Azimifard}}

\makeatletter
\renewcommand{\@maketitle}{%
  \newpage
  \null
  \vskip 2em%
  \begin{center}%
    {\LARGE \@title \par}%
    \vskip 1.5em%
    {\large \@author \par}%
    \vskip 1em%
    {\large \@date}%
  \end{center}%
  \par
  \vskip 1.5em}
\makeatother

\begin{document}
\maketitle

\begin{abstract}
We consider linear cocycles over an ergodic translation of the two-torus, acting on a
Hermitian vector bundle presented by unitary sewing matrices, together with a continuous
invariant complex line subbundle $L$. Our main result is line-resolved: if $c_1(L)\neq 0$,
then, under a measurable phase-coboundary hypothesis---which we prove to be automatic
for rank-two analytic cocycles with a dominated splitting over a Diophantine
translation---$L$ carries no nonzero measurable eigensection, for any eigenvalue. The
abstract obstruction holds in every rank $n\ge1$; the analytic discharge of the
hypothesis is a rank-two theorem. In the analytic dominated Diophantine regime the
obstruction is moreover \emph{exact}: writing $k$ for the winding vector of the
multiplier's periodic phase, $L$ carries a nonzero measurable eigensection if and only if
$c_1(L)=0$ and $k=0$, in which case the eigenvalues realised on $L$ form a dense coset
$\lambda_0\{\eu^{-2\pi\ii\nu\cdot\tau}\}$ of the circle of radius $\eu^{\lambda_L}$, all
geometrically simple within $L$, i.e.\ with one-dimensional corresponding eigensection
spaces carried by $L$, and with real-analytic nowhere-vanishing eigensections. The
Diophantine hypothesis is
not removable: over a Liouville translation there are analytic dominated data with
$c_1(L)=k=0$ carrying nothing, the obstruction lying in the modulus of the multiplier
rather than in its phase. The mechanism is a normalization on the
universal cover which converts the first Chern class into the magnetic charge of a
scalar quasi-periodicity class; the eigensection equation becomes a fixed-point equation
for a unitary operator lying in an irrational magnetic-translation family, and the
classical covariance argument (one eigenvector generates uncountably many mutually
orthogonal ones) yields the contradiction. The covariance mechanism itself is classical;
what appears to be new is the identification of $c_1$ of an invariant subbundle---an
$H^2$-invariant, invisible over one-frequency bases---as a magnetic charge obstructing
measurable eigensections, together with the Stein-theoretic construction of an analytic
normalized gauge on nontrivial line bundles over strip tori that discharges the
phase-coboundary hypothesis. The result does not by itself exclude point spectrum of the
cocycle: eigensections may exist, and do exist in examples, on topologically trivial
invariant lines; the charge and winding vector of a line decide whether it contributes to the point
spectrum at all, while the location of its contribution is fixed by the mean multiplier
data through $\lambda_0$. Equal-exponent (elliptic), nonuniformly hyperbolic, and
noninvertible regimes are not treated here.
Selected statements have been formalized and kernel-checked in a private Lean development,
including the main obstruction at the integer-charge level. The conventional
first-Chern-class-to-charge identification and the Stein/Cousin-II input remain external,
and no claim of complete formal verification is made.
\end{abstract}

\section{Introduction}\label{sec:intro}

Let $\tau=(\alpha,\beta)\in\R^2$ and let $S\colon w\mapsto w+\tau$ be the corresponding
translation of the two-torus $\T^2=\R^2/\Z^2$, assumed ergodic (equivalently:
$1,\alpha,\beta$ are rationally independent). A \emph{quasi-periodic bundle cocycle} is a
continuous invertible-matrix-valued map $A$ on $\R^2$, equivariant with respect to a
unitary sewing presentation of a Hermitian vector bundle $V\to\T^2$, so that the pair
$(S,A)$ acts on measurable sections of $V$ by the weighted translation
$(T_AF)(w)=A(w-\tau)F(w-\tau)$. Such operators arise as transfer operators of
quasi-periodic Schr\"odinger-type and Jacobi-type models, as matrix weighted shifts, and
---the application that motivated this work---as vector-valued Zak transforms of
finite Weyl polynomials in time-frequency analysis (Section~\ref{sec:zak}).

The eigenvalue equation for $T_A$ is the \emph{eigensection equation}
\begin{equation}\label{eq:eigen}
A(w)\,F(w)=\lambda\,F(w+\tau)\qquad\text{for a.e.\ }w\in\R^2 ,
\end{equation}
for a measurable section $F$ of $V$ and $\lambda\in\C$. A nonzero measurable solution
spans, almost everywhere, a measurable invariant line; when the cocycle admits a
dominated splitting $E^s\oplus E^u$ by continuous invariant line subbundles, every
nonzero measurable eigensection is in fact carried by $E^s$ or by $E^u$ almost everywhere
(Lemma~\ref{lem:carried}). The question addressed here is: \emph{which invariant lines
can carry an eigensection?}

Over a one-frequency base $\T^1$ the topological invariants available for this question
are of $H^1$-type: the winding (degree) of the multiplier of an invariant line, and the
degree-type invariants of the cocycle map itself. These control ergodicity and spectral
properties of skew products and weighted shifts in a classical body of work (Anzai
\cite{Anzai1951}, Furstenberg \cite{Furstenberg1961}, Gabriel--Leman\-czyk--Liardet
\cite{GLL1991}, Iwanik--Leman\-czyk--Rudolph \cite{ILR1993}, Fr\k{a}czek
\cite{Fraczek2000,Fraczek2004}), and they enter the modern analytic theory of
one-frequency cocycles through the winding of the invariant multiplier
(Avila--Jitomirskaya--Sadel \cite{AJS2014}). Over a two-dimensional base a genuinely new
invariant appears: an invariant line subbundle $L\subset V$ has a first Chern class
$c_1(L)\in H^2(\T^2;\Z)\cong\Z$, which has no one-frequency counterpart. To our
knowledge, no obstruction to point spectrum based on this $H^2$-invariant appears in
the literature for bundle cocycles, beyond the classical homogeneous (nilflow) case
discussed in the related-work section below; the closest topological results for
quasi-periodic cocycles over higher tori concern the existence of dominated
splittings (Duarte--Klein \cite{DuarteKlein2019}) or label spectral gaps by
$H^1$-type Schwartzman data (Li--Wu \cite{LiWu2025}), and the conceptually adjacent
Wannier-bundle literature obstructs \emph{well-localized frames} of spectral
subspaces rather than eigensections (see below). In the opposite, constructive
direction, the very recent four-point HRT construction \cite{DDSWY2026} builds a
smooth eigensection on a dominated invariant line certified to be topologically
\emph{trivial}---the case our obstruction leaves open---and is compared in the
related-work section.

\subsection*{Main results}

Fix the conventions of Section~\ref{sec:setting}. Our first main theorem is abstract and
conditional on a single cohomological hypothesis. For a continuous invariant line $\ell$
with normalized unit section $e$ of charge $m$ (Section~\ref{sec:charge}) let
$q(w)=\ip{e(w+\tau)}{A(w)e(w)}$ be its multiplier, $p=(q/\abs q)\,\eu^{2\pi\ii m\alpha
w_2}$ its $\Z^2$-periodic phase, $k\in\Z^2$ the winding vector of $p$ and $g\colon
\T^2\to\R$ its continuous zero-winding phase, so that $p=\eu^{2\pi\ii(k\cdot w+g(w))}$.

\begin{quote}
\textbf{Theorem A} (Theorem~\ref{thm:A}). \emph{Let $n\ge1$, let $(U_1,U_2)$ be a
continuous unitary sewing system on $\R^2$ with values in $U(n)$,
$A\colon\R^2\to GL(n,\C)$ a continuous equivariant cocycle, and assume
$1,\alpha,\beta$ are rationally independent over $\Q$. Let $\ell$ be a continuous
invariant line with $c_1(L)\neq0$, and assume the measurable phase-coboundary hypothesis
\textup{(H2)}: $g-\langle g\rangle$ is a measurable additive coboundary of the rotation
$S$. Then for every $\lambda\in\C$, every measurable section $F$ of $V$ satisfying
\eqref{eq:eigen} and $F(w)\in\ell(w)$ a.e.\ vanishes almost everywhere.}
\end{quote}

Theorem~A requires neither domination, a Lyapunov gap, a Diophantine condition, nor an
$L^2$ hypothesis, and it applies to every rank $n\geq1$ and every $\lambda\in\C$; the
case $\lambda=0$ is immediate from the invertibility of $A$, and ergodicity enters only
in the indicated step. Theorem~B verifies (H2) for analytic
dominated rank-two cocycles using the M\"obius-contraction and Stein-normalization
arguments below. No higher-rank analogue is asserted.

\begin{quote}
\textbf{Theorem B} (Theorem~\ref{thm:B}). \emph{Let $(U_1,U_2,A)$ be strip-admissible
analytic data \textup{(Definition~\ref{def:stripadm})} with values in $U(2)$ and
$GL(2,\C)$, admitting a dominated splitting $E^s\oplus E^u$ by continuous invariant line
subbundles \textup{(Definition~\ref{def:dom})}, over a translation $\tau$ satisfying the
Diophantine condition \textup{DC} \textup{(Definition~\ref{def:DC})}. If $L\in\{E^s,E^u\}$
has $c_1(L)\neq0$, then $L$ carries no nonzero measurable eigensection, for any
$\lambda\in\C$.}
\end{quote}

\noindent
The engine of Theorem~B is a lemma with \emph{no} hypothesis on the Chern class
(Lemma~\ref{lem:analytic}): either line of the splitting admits a real-analytic
normalized unit section, in which $\log\abs q$ and $g$ are real analytic and both
cohomological equations have real-analytic solutions. Because it is charge-free, the same
lemma drives the converse.

\begin{quote}
\textbf{Theorem D} (Theorem~\ref{thm:dich}). \emph{Assume the strip-admissible rank-two data,
dominated splitting, and Diophantine translation of Theorem~B. Let $L$ be any continuous
invariant line---necessarily $E^s$ or $E^u$
\textup{(Lemma~\ref{lem:onlytwo})}. Then $L$ carries a nonzero measurable eigensection if
and only if $c_1(L)=0$ and $k=0$. When both vanish, the eigenvalues realised on $L$ are
exactly $\lambda_0\{\eu^{-2\pi\ii\nu\cdot\tau}:\nu\in\Z^2\}$ with
$\lambda_0=\exp(\langle\log\abs q\rangle+2\pi\ii\langle g\rangle)$, a dense subset of the
circle of radius $\eu^{\lambda_L}$; each is geometrically simple within $L$, i.e.\ its
corresponding eigensection space carried by $L$ is one-dimensional, and every measurable
eigensection carried by $L$ is a.e.\ real analytic and nowhere vanishing
\textup{(Theorem~\ref{thm:class})}.}
\end{quote}

\noindent
The Diophantine hypothesis cannot be deleted from the sufficiency direction of
Theorem~D: over a Liouville translation there are analytic dominated data with
$c_1(L)=0$ and $k=0$ whose line carries no measurable eigensection
(Theorem~\ref{thm:sharp}). We prove nothing about the necessity of the remaining
hypotheses --- domination, analyticity, invertibility, rank two --- and
Example~\ref{ex:elliptic} shows the conclusion can survive the failure of one of them. What fails in Theorem~\ref{thm:sharp} is not the phase
hypothesis \textup{(H2)}, which holds trivially, but the cohomological equation for
$\log\abs q$---the half of the multiplier that the obstruction direction discards
(Remark~\ref{rem:asym}). The two directions are therefore not mirror images at the
abstract level; they become one only in the analytic--Diophantine regime.

Combining Theorem~B with a Birkhoff-type classification of carried lines gives an
operator-level statement:

\begin{quote}
\textbf{Corollary C} (Corollary~\ref{cor:C}). \emph{Assume the strip-admissible rank-two
data, dominated splitting, and Diophantine translation of Theorem~B. Then every nonzero
measurable eigensection of $T_A$ is carried a.e.\ by $E^s$ or $E^u$.
Consequently, if both $c_1(E^s)\neq0$ and $c_1(E^u)\neq0$, then $T_A$ has no eigenvalues
at all; if exactly one vanishes, every eigensection is carried a.e.\ by the topologically
trivial line. With Theorem~D the two invariants of each line decide which lines
contribute to $\sigma_{\mathrm{pt}}(T_A)$; the coset each contributes is located by
$\lambda_0$, which these invariants do not determine
\textup{(Corollary~\ref{cor:D})}.}
\end{quote}

The conclusion is genuinely line-resolved and cannot be upgraded to a bundle-level
statement: on the same nontrivial bundle ($c_1(V)[\T^2]=1$), an analytic \emph{unitary}
equivariant cocycle with equal Lyapunov exponents can have a dense family of analytic
eigensections, all carried by a trivial invariant line, and so can an analytic dominated
cocycle (Examples~\ref{ex:elliptic}--\ref{ex:dominated}). Example~\ref{ex:both} shows
Corollary~C is not vacuous.

\subsection*{What is classical and what appears to be new}

The final contradiction in Theorem~A is a covariance argument of classical lineage: a
unitary $W$ satisfying $WM_s=\chi(s)M_sW$ for a one-parameter unitary family $M_s$ and a
nonconstant character family $\chi$ cannot have an eigenvector in a separable Hilbert
space, since one eigenvector would generate uncountably many eigenvectors with distinct
eigenvalues. This is the Weyl-commutation mechanism (Weyl, von Neumann
\cite{vonNeumann1931}); its incarnations include Zak's magnetic translation groups
\cite{Zak1964} and the translation-covariance proof of absence of bound states for Stark
Hamiltonians (Avron--Herbst \cite{AvronHerbst1977}); the one-frequency, charge-zero limit
of our phase equation is the classical coboundary obstruction of Anzai skew products
\cite{Anzai1951,Furstenberg1961}. This covariance mechanism is classical.

What appears to be new is: (i) the observation that after a universal-cover
normalization, the first Chern class of the \emph{carrying line}---not of the ambient
bundle---becomes precisely the magnetic charge $m$ of the scalar quasi-periodicity class
of the eigensection equation, so that $c_1(L)\neq0$ places the phase walk in an
irrational magnetic translation family regardless of eigenvalue and of multiplier
winding (Sections~\ref{sec:charge}--\ref{sec:magnetic}); (ii) the resulting theorem
itself, an $H^2$-obstruction to measurable eigensections, which requires at least two
base frequencies and whose only antecedents known to us are the classical
homogeneous nilflow case and a nonrigorous physics counterpart
(Section~\ref{sec:related}); and
(iii) the exactness of the obstruction in the analytic dominated Diophantine regime
(Theorem~\ref{thm:dich}): the two invariants $m$ and $k$ are, respectively, the
obstruction to trivializing $L$ and the obstruction to a global logarithm of the
multiplier once $L$ is trivialized (Lemma~\ref{lem:log}), and once both vanish only a
small-divisor problem remains, which the Diophantine condition solves---so the list of
obstructions is complete; and (iv) the analytic normalization (Section~\ref{sec:stein}): a dominated invariant line of
a strip-admissible analytic cocycle extends holomorphically to a strip, and Stein theory
for the quotient---a product of annuli, on which $\Pic\cong H^2$ by Cartan's Theorem~B
and the exponential sequence---produces a \emph{real-analytic} unit section with the
standard charge automorphy for \emph{any} value of the Chern class; this is what makes
the phase data analytic and the small-divisor discharge of (H2) possible.

We emphasize the limits of the conclusion. Theorem~D is a statement about the
strip-admissible dominated regime over a Diophantine translation, and Theorem~\ref{thm:sharp}
shows the arithmetic hypothesis is not decoration. Nothing is proved in the elliptic,
nonuniformly hyperbolic, or noninvertible regimes, nor in rank $n>2$, and nothing here
decides linear independence of any time-frequency configuration; these questions lie
outside the scope of the present paper.

\subsection*{Related work}\label{sec:related}

\emph{Topological invariants of quasi-periodic cocycles.} Duarte--Klein
\cite{DuarteKlein2019} obstruct the existence of dominated splittings in prescribed
homotopy classes of cocycles over higher-dimensional tori; their invariant is the
homotopy class of the cocycle map and their conclusion concerns continuous invariant
subbundles, not eigensections. Avila--Jitomirskaya--Sadel \cite{AJS2014} develop the
one-frequency theory; their Lemma~6.4 and its proof identify the acceleration with
minus the winding of the holomorphic invariant multiplier, an $H^1$-statement that our
winding vector $k$ generalizes but that cannot see $c_1$. Li--Wu \cite{LiWu2025} label
spectral gaps of
symplectic cocycles over ergodic bases by the Schwartzman group, again $H^1$-data.
Herman-type analytic examples and Bjerkl\"ov's $C^2$ two-frequency Schr\"odinger examples
show that positive Lyapunov exponent need not by itself yield uniform hyperbolicity
\cite{Herman1983,Bjerklov2007}.

\emph{Skew products and weighted shifts over rotations.} For circle extensions
$T_\varphi(x,y)=(x+\alpha,y+\varphi(x))$ with $\deg\varphi\neq0$, ergodicity for every
irrational $\alpha$ holds for absolutely continuous $\varphi$ \cite{GLL1991}, with
countable Lebesgue spectrum in the nonzero fibers when $\varphi'$ has bounded variation
\cite{ILR1993}; consequently a $C^1$ circle-valued cocycle of nonzero degree is never
measurably cohomologous to a constant. These are one-frequency, $H^1$-type antecedents
of Proposition~\ref{prop:winding}. For $SU(2)$-valued cocycles, nonzero degree forces
non-ergodicity of the skew product and measurable reduction to the maximal torus
(Fr\k{a}czek \cite{Fraczek2000,Fraczek2004})---a useful contrast showing that
higher-rank degree phenomena need not exclude reducibility. The dictionary between
eigenvalues of matrix weighted shifts and invariant subbundles with multipliers is
classical \cite{Antonevich1996,LatushkinStepin1991}.

\emph{The Wannier/Bloch-bundle line.} The statement ``$c_1\neq0$ forbids well-localized
sections'' has a well-developed history for spectral projections of periodic and
magnetic Schr\"odinger operators: Thouless \cite{Thouless1984}; triviality under
time-reversal (Panati \cite{Panati2007}); the localization--topology correspondence of
Monaco--Panati--Pisante--Teufel \cite{MPPT2018} ($\langle X^2\rangle<\infty$ for a
composite Wannier basis iff the Bloch bundle is trivial); Parseval-frame substitutes in
the nontrivial case (Cornean--Monaco--Moscolari \cite{CMM2019}); non-periodic extensions
\cite{MMP2023,LuStubbs2024}; and, on the covariant-operator side, the
localization dichotomies of Bellissard--van~Elst--Schulz-Baldes \cite{BES1994} and the
delocalization theorems for random Landau Hamiltonians \cite{GKS2007}. These concern
frames of infinite-dimensional spectral subspaces and quantitative decay in physical
space; none excludes \emph{measurable} eigensections of a cocycle over an ergodic torus
translation. In time-frequency analysis, the Balian--Low theorem is a Chern-number
$\pm1$
obstruction at critical density whose classical proof runs through the forced zero (or
winding) of a continuous Zak transform \cite{BHW1995}; our Theorem~A may be regarded as a
measurable-category cousin in which continuity is replaced by ergodicity of the base
dynamics.

\emph{The homogeneous antecedent: nilflows.} The oldest $H^2$-flavored instance of
the mechanism of Theorem~A is classical: $L^2$ of a Heisenberg nilmanifold
decomposes under the central circle action into character subspaces, and the $m$-th
character subspace is canonically a space of sections of a line bundle of degree $m$
over the base $\T^2$ (the Weil--Brezin--Zak picture). Green's theorem on the spectra
of ergodic nilflows \cite{AGH1963}---see also Parry \cite{Parry1969} for the
affine/discrete-time case, and Anzai \cite{Anzai1951} for the skew-product germ---%
gives countable Lebesgue spectrum, in particular \emph{no} eigenfunctions, in every
nonzero central character. In our language this is the conclusion of Theorem~A for a
special homogeneous cocycle in the model charge gauge (exact model multipliers,
$k=0$, constant $A$ along the model sewing). Theorem~A is the inhomogeneous
bundle-cocycle version: arbitrary continuous unitary sewing, arbitrary invertible
equivariant $A$ of any rank, arbitrary multiplier winding, arbitrary eigenvalue,
and merely measurable eigensections---at the price of the coboundary hypothesis
(H2), which Theorem~B discharges in the analytic dominated regime.

\emph{The physics antecedent: quasiperiodically driven qubits.} In the physics
literature, Crowley--Martin--Chandran \cite{CMC2019} (following Martin--Refael--%
Halperin \cite{MRH2017}) study two-tone driven qubits---in our language, smooth
$SU(2)$-valued two-frequency cocycles over a Kronecker flow---and classify them by
the Chern number $C_j$ of a quasi-energy band over the torus of drive phases. Three
of their findings frame ours. First, in the commensurate-approximant limit and
under an explicitly stated differentiability hypothesis on the quasi-energy, they
derive the dispersion identity $\nabla_{\theta_0}\varepsilon_j=
(C_j/2\pi)(-\Omega_2,\Omega_1)$ (their Eq.~(27)), which already precludes a
band-constant quasi-energy when $C_j\neq0$. Second, they show---constructively, in
the smooth category, with an irrationality-measure (Diophantine-type) ingredient
entering exactly where our small divisors do---that a band admits a smooth
monodromy gauge if and only if $C_j=0$ (their Eqs.~(38)--(40) and Appendix~B).
Third, they argue, by a Hall-response analogy with Chern insulators in a weak
electric field and by inverse-participation-ratio numerics, that nonzero $C_j$ goes
along with delocalization of quasi-energy states on the frequency lattice and with
quantized energy pumping between the drives. These are physical arguments and
numerics, presented as such: no theorem-level statement is made, the analysis is
confined to the smooth/localized dichotomy, and merely measurable quasi-energy
states are not considered. Theorem~B gives a related rigorous obstruction in the
dominated analytic Diophantine setting: a nonzero Chern class of the carrying line
excludes measurable eigensections. The elliptic unitary regime considered in those works
lies outside the unconditional reach of the present method, although Theorem~A still
applies whenever (H2) can be verified.

\emph{Recent constructive counterparts.} Faulhuber, Petersen, van Velthoven, and
Voigtlaender construct a twelve-point counterexample
\cite{FPVV2026}. Oussa \cite{Oussa4pt2026} and Dai--Deng--Shi--Wu--Yang
\cite{DDSWY2026} independently construct four-point counterexamples using their own
computer-assisted certificates. In the latter work, an interval computation proves that
the displayed vector-Zak cocycle has a topologically trivial dominated contracting line,
and the authors construct a smooth eigensection on that line. These are existence
constructions, whereas Theorem~\ref{thm:Bex} is a criterion, so neither kind of result
implies the other. The three sources are recent unrefereed preprints, and references here
are to the cited versions.

\emph{Time-frequency motivation.} The vector-Zak reduction of \cite{FPVV2026}
(Faulhuber, Petersen, van Velthoven, Voigtlaender), which disproved the
HRT conjecture \cite{HRT1996}, produces exactly the class of bundle cocycles studied
here, over the translation by the cubic vector $\tau^*=(2^{1/3}-1,\,2^{2/3}-1)$, on a
bundle with $c_1(V)[\T^2]=1$; see Section~\ref{sec:zak}, where the present results are
specialized to that setting. We make no claim about linear independence of
time-frequency systems.

\subsection*{Organization}

Section~\ref{sec:setting} fixes conventions. Sections~\ref{sec:charge}--\ref{sec:magnetic}
prove Theorem~A. Sections~\ref{sec:strip}--\ref{sec:analytic} prove Theorem~B and its converse.
Section~\ref{sec:corollaries} proves Corollary~C, Theorem~D and the sharpness of its
arithmetic hypothesis, and presents the examples.
Section~\ref{sec:zak} records the vector-Zak specialization. The paper concludes with the
principal open problem. The closing disclosure material states the scope and limitations
of the private Lean development and contains the acknowledgments, authorship statement,
and AI-assistance disclosure.

\section{Setting and conventions}\label{sec:setting}

Throughout, $\T^2=\R^2/\Z^2$ with coordinates $w=(w_1,w_2)$, standard generators
$e_1=(1,0)$, $e_2=(0,1)$, Haar (Lebesgue) probability measure $\mu$, and orientation
class $[dw_1\wedge dw_2]$. Inner products on $\C^n$ are conjugate-linear in the
\emph{first} slot. For an invertible linear map $M$ between one-dimensional spaces we
write $\norm M$ for its norm; norms of compositions of maps between lines multiply. We
write $\co(M)=\min_{\abs v=1}\abs{Mv}$ for the co-norm (minimum modulus) of a matrix and
$\sigma_1(M)\ge\sigma_2(M)$ for singular values of $M\in\C^{2\times2}$, so
$\sigma_1=\norm M$ and $\sigma_2=\co(M)$ for invertible $M\in GL(2,\C)$. The translation
is
\[
S w=w+\tau,\qquad \tau=(\alpha,\beta),
\]
and \emph{ergodic} always means: $1,\alpha,\beta$ are rationally independent over $\Q$.
This is equivalent to ergodicity of $S$ on $(\T^2,\mu)$: if $h\in L^2(\T^2)$ is
$S$-invariant, its Fourier coefficients obey
$\hat h_\nu(\eu^{2\pi\ii\nu\cdot\tau}-1)=0$, and rational independence makes
$\eu^{2\pi\ii\nu\cdot\tau}\neq1$ for $\nu\neq0$, so $h$ is constant; the converse is
immediate from the invariant character.

\begin{definition}[sewing system; sections]\label{def:sewing}
A \emph{continuous unitary sewing system} is a pair of continuous maps
$U_1,U_2\colon\R^2\to U(n)$ satisfying the consistency relation
\begin{equation}\label{eq:K1}
U_1(w+e_2)\,U_2(w)=U_2(w+e_1)\,U_1(w),\qquad w\in\R^2. \tag{K1}
\end{equation}
It presents a Hermitian vector bundle $V=V_U\to\T^2$ of rank $n$ as the quotient of
$\R^2\times\C^n$ by the $\Z^2$-action generated by
$(w,\xi)\mapsto(w+e_j,U_j(w)\xi)$; \eqref{eq:K1} is exactly the cocycle condition for
this action, which is free, properly discontinuous, and fiberwise unitary. Continuous
(resp.\ measurable) \emph{sections} of $V$ are identified with continuous (resp.\
measurable) $F\colon\R^2\to\C^n$ satisfying
\begin{equation}\label{eq:section}
F(w+e_j)=U_j(w)F(w)\qquad(j=1,2;\ \text{everywhere, resp.\ a.e.}).
\end{equation}
Since the $U_j$ are unitary, $\abs{F(\cdot)}$ descends to $\T^2$, and
$\norm F^2:=\int_{[0,1)^2}\abs F^2\,d\mu$ defines the section Hilbert space
$\mathcal H_U$ (measurable sections with finite norm, modulo null sets), a separable
Hilbert space.
\end{definition}

\begin{definition}[cocycle; transfer operator; eigensections]\label{def:cocycle}
A \emph{continuous equivariant cocycle} is a continuous $A\colon\R^2\to GL(n,\C)$
satisfying
\begin{equation}\label{eq:K2}
A(w+e_j)\,U_j(w)=U_j(Sw)\,A(w)\qquad(j=1,2,\ w\in\R^2). \tag{K2}
\end{equation}
By \eqref{eq:K2}, $A(w+e_j)=U_j(Sw)A(w)U_j(w)^{-1}$, so $\norm{A(\cdot)}$ and
$\co(A(\cdot))$ descend to continuous positive functions on $\T^2$; in particular
$0<c_0\le\co(A)\le\norm A\le C_0<\infty$ automatically. The \emph{transfer operator}
$(T_AF)(w)=A(w-\tau)F(w-\tau)$ maps sections to sections (apply \eqref{eq:K2} at
$w-\tau$ and $S(w-\tau)=w$), and $T_AF=\lambda F$ is
equivalent to the \emph{eigensection equation}
\begin{equation}\label{eq:eigen2}
A(w)F(w)=\lambda\,F(Sw)\qquad\text{a.e.}
\end{equation}
A \emph{measurable eigensection} is a measurable section $F$, not a.e.\ zero, satisfying
\eqref{eq:eigen2} for some $\lambda\in\C$; no integrability is assumed. Since $A(w)$ is
invertible, $\lambda=0$ forces $F=0$ a.e.; all statements below therefore concern
$\lambda\in\C\setminus\{0\}$ but are stated for all $\lambda\in\C$.
\end{definition}

\begin{definition}[invariant line]\label{def:invline}
An \emph{invariant line} is a continuous map $\ell\colon\R^2\to\Proj{\C^n}$ (lines
through $0$ in $\C^n$) with
\begin{equation}\label{eq:invline}
\ell(w+e_j)=U_j(w)\,\ell(w),\qquad A(w)\,\ell(w)=\ell(Sw)\qquad(j=1,2,\ w\in\R^2).
\end{equation}
It induces a continuous line subbundle $L\subset V$ over $\T^2$. A measurable section
$F$ is \emph{carried by $\ell$} if $F(w)\in\ell(w)$ for a.e.\ $w$.
\end{definition}

Here and below $\Proj{\C^n}$ denotes the projective space of $\C^n$; for $n=2$ we write
$\CP$ and use on it the round metric normalized so that in every unitary affine chart
$[r:1]\mapsto r$,
\begin{equation}\label{eq:metric}
ds=\frac{2\,\abs{dr}}{1+\abs r^2},
\end{equation}
i.e.\ the metric of the unit round sphere: the distance between the chart center and
$[r:1]$ is $2\arctan\abs r$, the distance between orthogonal lines is $\pi$ (the
diameter), and the distance between lines at Hermitian angle $\theta\in[0,\pi/2]$ is
$2\theta$. Unitaries act by isometries.

\section{The charge of an invariant line}\label{sec:charge}

Everything in this section is topological: $A$ plays no role.

\begin{lemma}[normalization]\label{lem:norm}
Let $\sigma=(\sigma_1,\sigma_2)\colon\R^2\to S^1\times S^1$ be continuous and satisfy the
consistency relation
\begin{equation}\label{eq:sigmacons}
\sigma_1(w+e_2)\,\sigma_2(w)=\sigma_2(w+e_1)\,\sigma_1(w).
\end{equation}
Then there are a unique $m\in\Z$ and a continuous $\omega\colon\R^2\to S^1$ with
\[
\sigma_j(w)=\rho^{(m)}_j(w)\,\frac{\omega(w+e_j)}{\omega(w)},\qquad
\rho^{(m)}:=\bigl(1,\ \eu^{2\pi\ii m w_1}\bigr).
\]
Moreover $m$ is unchanged if $\sigma$ is replaced by a cohomologous pair
$\sigma_j\cdot\partial_j\mu$, $\partial_j\mu:=\mu(\cdot+e_j)/\mu$, with
$\mu\colon\R^2\to S^1$ continuous.
\end{lemma}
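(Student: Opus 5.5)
We will construct $\omega$ in two gauge steps, then define $m$ as the winding of a loop and show it is an invariant.

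\textbf{Step 1: trivialize $\sigma_1$.} Since $\R^2$ is simply connected, write $\sigma_j=\eu^{2\pi\ii\phi_j}$ with continuous real $\phi_j$. Define $\omega_0(w)$ on the strip $0\le w_1\le 1$ by $\omega_0(w)=\eu^{2\pi\ii w_1\phi_1(0,w_2)}$. Then extend to all $w_1$ by the forced recursion $\omega_0(w+e_1)=\sigma_1(w)\omega_0(w)$, starting from $\omega_0(0,w_2)=1$. Concretely, on $[0,1]\times\R$ interpolate between $1$ and $\sigma_1(0,w_2)$, and then propagate to the other strips by the recursion. Continuity across the lines $w_1\in\Z$ holds by construction. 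Replacing $\sigma$ by $\sigma_j\,\omega_0(w)/\omega_0(w+e_j)$ gives a cohomologous pair with $\sigma_1\equiv1$.

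\textbf{Step 2: analyze $\sigma_2$.} With $\sigma_1\equiv1$, relation \eqref{eq:sigmacons} reads $\sigma_2(w+e_1)=\sigma_2(w)$. So $w_1\mapsto\sigma_2(w_1,w_2)$ is $1$-periodic for each $w_2$. Its winding number in $w_1$ is locally constant in $w_2$, hence constant; call it $m$. Then
\[
\sigma_2(w)\,\eu^{-2\pi\ii m w_1}=\eu^{2\pi\ii\psi(w)}
\]
with $\psi$ continuous and $1$-periodic in $w_1$. To choose the logarithm, first fix it on the line $w_1=0$ continuously in $w_2$, then lift along $w_1$; periodicity follows from the zero winding. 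It remains to remove $\psi$ using a gauge $\omega_1$ that is $1$-periodic in $w_1$ (so $\sigma_1\equiv1$ is preserved) and satisfies $\omega_1(w+e_2)/\omega_1(w)=\eu^{2\pi\ii\psi(w)}$. Take $\omega_1=\eu^{2\pi\ii\theta}$ where $\theta(w_1,w_2)=w_2\psi(w_1,0)$ for $0\le w_2\le1$. Extend by $\theta(w+e_2)=\theta(w)+\psi(w)$; this is the same strip construction as in Step 1, now in the $w_2$ direction, and it keeps $1$-periodicity in $w_1$. Composing the two gauges yields the stated form with $\rho^{(m)}$.

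\textbf{Uniqueness and invariance.} This is the step needing care, since different gauges must give the same $m$. We attach to any consistent pair the integer
\[
N(\sigma)=\deg\Bigl(\text{boundary loop of }[0,1]^2\Bigr),
\]
defined as follows. Set $N(\sigma)=\operatorname{wind}\bigl(w_1\mapsto\sigma_2(w_1,0)\bigr)-\operatorname{wind}\bigl(w_2\mapsto\sigma_1(0,w_2)\bigr)$. This makes sense because \eqref{eq:sigmacons} makes $\sigma_2(1,0)/\sigma_2(0,0)$ equal to $\sigma_1(0,1)/\sigma_1(0,0)$, so the combined path closes up; to be safe, define $N$ as the winding of the concatenated loop
\[
t\mapsto \sigma_2(t,0)\ \text{followed by}\ \sigma_1(0,\cdot)^{-1}
\]
suitably normalized.

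Next we check that $N$ is gauge invariant. Under $\sigma_j\mapsto\sigma_j\partial_j\mu$ the loop changes by the winding of $\mu(w+e_2)/\mu(w)$ along the bottom edge minus that of $\mu(w+e_1)/\mu(w)$ along the left edge. This equals the winding of $\mu$ around $\partial[0,1]^2$, which is $0$ because the square is contractible. Finally, for $\rho^{(m)}$ we have $N=m$. This gives both uniqueness of $m$ and invariance under cohomologous pairs.

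The main obstacle is getting the boundary-loop definition and its signs exactly right, so that it is genuinely closed and gauge-invariant. If the concatenation is awkward, I would fall back on an equivalent argument: if $\rho^{(m)}$ and $\rho^{(m')}$ are cohomologous via some $\mu$, then $\mu$ is $1$-periodic in $w_1$, and $\mu(w+e_2)/\mu(w)=\eu^{2\pi\ii(m-m')w_1}$. Comparing the $w_1$-winding of $\mu(\cdot,w_2+1)$ with that of $\mu(\cdot,w_2)$ then forces $m=m'$.
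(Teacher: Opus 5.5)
Your proposal is correct and follows the paper's proof essentially step for step. It uses the same two gauge steps: first interpolate on a fundamental strip and propagate by the forced recursion to get $\sigma_1\equiv1$, then remove the $e_1$-periodic part of $\sigma_2\,\eu^{-2\pi\ii m w_1}$ by an $e_1$-periodic gauge built the same way in the $e_2$-direction. Your boundary invariant is exactly the paper's constant integer $n(0)$, with the same telescoping proof of gauge invariance; it is best written via lifts $\sigma_j=\eu^{2\pi\ii r_j}$ as $N=[r_2(e_1)-r_2(0)]-[r_1(e_2)-r_1(0)]$ rather than as a ``suitably normalized'' concatenated loop. Your fallback argument (periodicity of $\mu$ in $w_1$ and comparison of $w_1$-windings under $w_2\mapsto w_2+1$) is verbatim the paper's uniqueness step.
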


\begin{proof}
Since $\R^2$ is simply connected, write $\sigma_j=\eu^{2\pi\ii r_j}$ with
$r_j\colon\R^2\to\R$ continuous; a lift is unique up to an additive integer constant.
By \eqref{eq:sigmacons},
\[
n(w):=r_2(w+e_1)+r_1(w)-r_1(w+e_2)-r_2(w)\in\Z
\]
for every $w$; being continuous and integer-valued on connected $\R^2$ it is a constant
$m\in\Z$, independent of the choice of lifts (each additive constant occurs twice with
opposite signs). If $\sigma$ is replaced by $\sigma_j\,\partial_j\mu$ with
$\mu=\eu^{2\pi\ii h}$, $h$ continuous, then $r_j$ is replaced by $r_j+h(\cdot+e_j)-h$
and the four $h$-differences in $n$ telescope to zero; hence $m$ is a cohomology
invariant, and $m=0$ for any coboundary pair.

\emph{Existence of the normal form.} First kill $\sigma_1$: define
$\xi(w):=-w_1\,r_1(0,w_2)$ for $0\le w_1\le1$ and extend to all of $\R^2$ by
$\xi(w+e_1):=\xi(w)-r_1(w)$; the seam values match
($\xi(1,w_2)=-r_1(0,w_2)=\xi(0,w_2)-r_1(0,w_2)$), each extension step pastes continuous
pieces agreeing on the seam, so $\xi$ is continuous and satisfies
$\xi(w+e_1)-\xi(w)=-r_1(w)$ for all $w$. Replacing $\sigma$ by the cohomologous pair
$\sigma_j\,\partial_j\eu^{2\pi\ii\xi}$ we may assume $\sigma_1\equiv1$, $r_1\equiv0$.
Now the constancy relation reads $r_2(w+e_1)-r_2(w)=m$, so
$\eta:=r_2-m\,w_1$ is $e_1$-periodic. Define $\zeta(w):=-w_2\,\eta(w_1,0)$ for
$0\le w_2\le1$ and extend by $\zeta(w+e_2):=\zeta(w)-\eta(w)$; as before $\zeta$ is
continuous, satisfies $\zeta(w+e_2)-\zeta(w)=-\eta(w)$, and is $e_1$-periodic because
$\eta$ is. Replacing $\sigma$ by $\sigma_j\,\partial_j\eu^{2\pi\ii\zeta}$ leaves
$\sigma_1\equiv1$ untouched ($\zeta$ is $e_1$-periodic) and replaces
$\sigma_2=\eu^{2\pi\ii r_2}$ by $\eu^{2\pi\ii(r_2-\eta)}=\eu^{2\pi\ii m w_1}$. Undoing
the two substitutions, $\sigma_j=\rho^{(m)}_j\,\partial_j\omega$ with
$\omega=\eu^{-2\pi\ii(\xi+\zeta)}$.

\emph{Uniqueness of $m$.} If $\rho^{(m)}_j=\rho^{(m')}_j\,\partial_j\omega$, the $j=1$
relation says $\omega$ is $e_1$-periodic, so the winding number $N(w_2)\in\Z$ of the
loop $w_1\mapsto\omega(w_1,w_2)$, $w_1\in[0,1]$, is defined; it is continuous in $w_2$,
hence constant. The $j=2$ relation reads
$\omega(w+e_2)=\eu^{2\pi\ii(m-m')w_1}\,\omega(w)$; comparing windings of the two sides
gives $N=(m-m')+N$, so $m=m'$.
\end{proof}

\begin{lemma}[normalized unit section; charge]\label{lem:charge}
Let $\ell$ be as in Definition~\ref{def:invline} (only the sewing relation in
\eqref{eq:invline} is used). Then there exist a continuous unit section
$e\colon\R^2\to\C^n$, $\abs{e}=1$, $e(w)\in\ell(w)$, and a unique integer $m=m(\ell)$,
the \emph{charge}, with
\begin{equation}\label{eq:K3}
e(w+e_1)=U_1(w)\,e(w),\qquad
e(w+e_2)=\eu^{2\pi\ii m w_1}\,U_2(w)\,e(w). \tag{K3}
\end{equation}
Any two continuous unit sections satisfying \eqref{eq:K3} have the same $m$, and differ
by a continuous $\Z^2$-periodic $S^1$-valued factor.
\end{lemma}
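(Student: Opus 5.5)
The plan is to reduce everything to Lemma~\ref{lem:norm}. First I need some continuous unit section of $\ell$ on $\R^2$. Pulled back to $\R^2$, the map $\ell$ defines a continuous line bundle over a contractible space, so it is trivial. Concretely, I would pick a continuous nonvanishing section $f$ with $f(w)\in\ell(w)$ and set $\tilde e=f/\abs f$. To construct $f$ explicitly, I would use the tautological line bundle over $\Proj{\C^n}$ pulled back along $\ell$; any line bundle over $\R^2$ admits a global nonvanishing section, since $\R^2$ is paracompact and contractible.

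Next I compare the translates. Because $\ell(w+e_j)=U_j(w)\ell(w)$ and $U_j$ is unitary, both $\tilde e(w+e_j)$ and $U_j(w)\tilde e(w)$ are unit vectors in the same line. Hence
\[
\tilde e(w+e_j)=\sigma_j(w)\,U_j(w)\,\tilde e(w)
\]
with $\sigma_j(w)=\ip{U_j(w)\tilde e(w)}{\tilde e(w+e_j)}\in S^1$ continuous. Expanding $\tilde e(w+e_1+e_2)$ in the two orders and using \eqref{eq:K1} gives
\[
\sigma_1(w+e_2)\sigma_2(w)\,U_1(w+e_2)U_2(w)\tilde e(w)
=\sigma_2(w+e_1)\sigma_1(w)\,U_2(w+e_1)U_1(w)\tilde e(w).
\]
The two unitary products agree and $\tilde e(w)\neq0$, so this is exactly \eqref{eq:sigmacons}. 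Lemma~\ref{lem:norm} then produces $m$ and $\omega$ with $\sigma_j=\rho^{(m)}_j\,\omega(\cdot+e_j)/\omega$. Setting $e:=\omega\,\tilde e$ gives
\[
e(w+e_j)=\omega(w+e_j)\,\sigma_j(w)\,U_j(w)\tilde e(w)=\rho^{(m)}_j(w)\,U_j(w)\,e(w),
\]
which is \eqref{eq:K3} once $\omega$ is placed on the correct side of the quotient. If the sign convention comes out inverted, I would simply replace $\omega$ by $\bar\omega$. This sign bookkeeping is the only place where care is needed.

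For uniqueness, let $e,e'$ be two continuous unit sections satisfying \eqref{eq:K3} with charges $m,m'$. Since both lie in the same line, $e'=\mu\,e$ with $\mu=\ip{e}{e'}$ continuous and $S^1$-valued. Substituting into \eqref{eq:K3} gives
\[
\mu(w+e_1)=\mu(w),\qquad \mu(w+e_2)=\eu^{2\pi\ii(m'-m)w_1}\mu(w).
\]
This says $\rho^{(m')}=\rho^{(m)}\,\partial\mu$. The uniqueness part of Lemma~\ref{lem:norm} (the winding comparison) then forces $m=m'$, and with $m=m'$ the same two relations say that $\mu$ is $\Z^2$-periodic. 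The same argument shows that $m$ does not depend on the initial choice of $\tilde e$, since different choices change $\sigma$ only by a coboundary, which the invariance clause of Lemma~\ref{lem:norm} handles.

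The only step that is not pure bookkeeping is the existence of the global unit section on $\R^2$. I would settle it by the triviality of bundles over a contractible base, or by hand: cover $\R^2$ by a grid of closed squares small enough that $\ell$ stays in one affine chart on each square, then build the section by extending over the squares one at a time. The extension works because $S^1$-valued functions on simply connected sets admit continuous logarithms.
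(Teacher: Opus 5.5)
Your proposal is correct and is essentially the paper's proof. Both trivialize the pullback of the tautological bundle over the contractible $\R^2$, read off the unimodular factors $\sigma_j$, derive \eqref{eq:sigmacons} from \eqref{eq:K1}, normalize with Lemma~\ref{lem:norm}, and get uniqueness from its winding comparison.

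Two points need adjusting. First, the sign you hedged on does need fixing. With $\sigma_j=\rho^{(m)}_j\,\omega(\cdot+e_j)/\omega$, the correct gauge is $e:=\bar\omega\,\tilde e$, as in the paper, whereas $e=\omega\tilde e$ yields the factor $\rho^{(m)}_j\bigl(\omega(\cdot+e_j)/\omega\bigr)^2$. Second, in your optional by-hand construction the squares may have to shrink toward infinity, since the lift $\ell$ need not be uniformly continuous on $\R^2$ (Remark~\ref{rem:ucfails}).
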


\begin{proof}
The pullback under $\ell\colon\R^2\to\mathbb P(\mathbb C^n)$ of the tautological line
bundle is a line bundle over the contractible paracompact base $\R^2$, hence trivial
\cite[Cor.~1.8]{HatcherVB}; normalizing a trivializing section fiberwise gives a
continuous unit section $e_0$ of $\ell$. For $j=1,2$, both $e_0(w+e_j)$ and
$U_j(w)e_0(w)$ are unit vectors in the line $\ell(w+e_j)$, so
$e_0(w+e_j)=\sigma_j(w)U_j(w)e_0(w)$ defines continuous
$\sigma_j(w)=\ip{U_j(w)e_0(w)}{e_0(w+e_j)}\in S^1$. Computing $e_0(w+e_1+e_2)$ along the
two edge orders and cancelling the common unit vector using \eqref{eq:K1} yields the
consistency \eqref{eq:sigmacons}. Apply Lemma~\ref{lem:norm} and set
$e:=\overline\omega\,e_0\;(=\omega^{-1}e_0)$; then $e$ satisfies \eqref{eq:K3} with the
lemma's $m$.
If $e,e'$ both satisfy \eqref{eq:K3} with integers $m,m'$, then $e'=\nu e$ with
$\nu\colon\R^2\to S^1$ continuous, and comparing automorphies gives
$\nu(w+e_1)=\nu(w)$ and $\nu(w+e_2)=\eu^{2\pi\ii(m'-m)w_1}\nu(w)$; the winding argument
of Lemma~\ref{lem:norm} forces $m'=m$ and then $\nu$ is $\Z^2$-periodic.
\end{proof}

\begin{lemma}[charge equals degree]\label{lem:degree}
Let $\ell,e,m$ be as in Lemma~\ref{lem:charge} and let $L\subset V$ be the induced line
subbundle over $\T^2$. Then
\[
c_1(L)\,[\T^2]=m
\]
with respect to the orientation $[dw_1\wedge dw_2]$ and the Chern--Weil normalization
$c_1(\nabla)=\tfrac{\ii}{2\pi}F_\nabla$. In particular $m=0$ iff $L$ is topologically
trivial.
\end{lemma}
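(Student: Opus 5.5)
The plan is to avoid computing curvature on $L$ directly, since $L$ and its unit section $e$ are only continuous. Instead, use $e$ to build an explicit isomorphism from $L$ onto a smooth \emph{model} line bundle whose automorphy factor is the scalar charge factor of \eqref{eq:K3}. Then compute $c_1$ of the model by Chern--Weil with an explicit connection form.

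\emph{Step 1: the model bundle.} Consider the map $\Phi\colon\R^2\times\C\to\R^2\times\C^n$, $(w,z)\mapsto(w,z\,e(w))$. It is a fiberwise linear isomorphism onto the total space of the pullback of $\ell$. Under the deck action defining $V$, the point $(w,z e(w))$ goes to $(w+e_j,zU_j(w)e(w))$. By \eqref{eq:K3} this equals $(w+e_j,z\,\overline{\rho^{(m)}_j(w)}\,e(w+e_j))$, since $\abs{\rho^{(m)}_j}=1$. Hence $\Phi$ intertwines the $V$-action with the scalar $\Z^2$-action
\[
(w,z)\mapsto\bigl(w+e_j,\ g_j(w)z\bigr),\qquad g_1\equiv1,\quad g_2(w)=\eu^{-2\pi\ii m w_1}.
\]
Therefore $\Phi$ descends to a continuous Hermitian isomorphism $L\cong L_m$, where $L_m$ is the smooth line bundle over $\T^2$ presented by the smooth multipliers $(g_1,g_2)$; these satisfy the scalar consistency relation. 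Since $c_1$ is a topological invariant of complex line bundles, and a continuous line bundle over a smooth manifold carries a smooth structure unique up to isomorphism, we get $c_1(L)=c_1(L_m)$.

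\emph{Step 2: Chern--Weil on $L_m$.} Sections of $L_m$ are functions $f$ on $\R^2$ with $f(w+e_j)=g_j(w)f(w)$. A connection $\nabla f=df+af$, with $a$ an imaginary $1$-form on $\R^2$, descends exactly when $a(w+e_j)=a(w)-d\log g_j$. Here $d\log g_1=0$ and $d\log g_2=-2\pi\ii m\,dw_1$. The choice
\[
a=2\pi\ii m\,w_2\,dw_1
\]
works: it is $e_1$-periodic, and $a(w+e_2)=a(w)+2\pi\ii m\,dw_1$. Since $a$ is purely imaginary, $\nabla$ is unitary for the standard fiber metric, which is well defined because $\abs{g_j}=1$. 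Its curvature is $F_\nabla=da=-2\pi\ii m\,dw_1\wedge dw_2$, which is $\Z^2$-invariant. Thus $c_1(\nabla)=\tfrac{\ii}{2\pi}F_\nabla=m\,dw_1\wedge dw_2$, and integrating over $[0,1)^2$ with orientation $[dw_1\wedge dw_2]$ gives $c_1(L)[\T^2]=m$.

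\emph{Step 3: triviality.} Complex line bundles over $\T^2$ are classified by $c_1\in H^2(\T^2;\Z)\cong\Z$, so $L$ is trivial iff $m=0$. One can also argue directly. If $m=0$, then $e$ descends to a global nonvanishing section. Conversely, a trivialization yields a unit section satisfying \eqref{eq:K3} with charge $0$, and the uniqueness clause of Lemma~\ref{lem:charge} forces $m=0$.

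The only delicate point is the bookkeeping of signs: the direction of the multiplier (it is $\overline{\rho^{(m)}}$, not $\rho^{(m)}$), the transformation rule for $a$, and the orientation and normalization conventions. A sign slip would produce $-m$, so I would double-check the sign by recomputing with $m=1$ against the standard degree-one model.
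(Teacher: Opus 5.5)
Your proposal is correct and follows the paper's proof essentially step by step. You use the same isomorphism $L\cong L_m$ onto the model bundle with automorphy $f(w+e_2)=\eu^{-2\pi\ii m w_1}f(w)$; you build it on total spaces via $(w,z)\mapsto(w,z\,e(w))$, which is the inverse of the paper's $F\mapsto\ip{e}{F}$. You then use the same connection form $2\pi\ii m\,w_2\,dw_1$, with curvature $-2\pi\ii m\,dw_1\wedge dw_2$, and the same appeal to the classification of line bundles by $c_1$. Your direct argument that $m=0$ iff $L$ is trivial also works and avoids the classification theorem for that part: descend $e$ in one direction; in the other, normalize a nonvanishing section and invoke the uniqueness clause of Lemma~\ref{lem:charge}.
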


\begin{proof}
Sections of $L$ correspond to sections $F$ of $V$ with $F\in\ell$; writing $F=fe$ with
$f=\ip{e}{F}$, the sewing \eqref{eq:section} and \eqref{eq:K3} translate (using
unitarity of $U_j$ and conjugate-linearity in the first slot) into
\begin{equation}\label{eq:C1}
f(w+e_1)=f(w),\qquad f(w+e_2)=\eu^{-2\pi\ii m w_1}f(w).
\end{equation}
Thus $F\mapsto f$ is an isomorphism of $L$ with the smooth model line bundle $L_m$ over
$\T^2$ whose sections are functions with the automorphy \eqref{eq:C1}, i.e.\ the
quotient of $\R^2\times\C$ by $(w,\xi)\sim(w+e_1,\xi)\sim\bigl(w+e_2,
\eu^{-2\pi\ii m w_1}\xi\bigr)$. On $L_m$ the formula
$\nabla f:=df+2\pi\ii m\,w_2\,dw_1\cdot f$ defines a Hermitian connection: the
connection form is imaginary, is $e_1$-periodic, and under $w\mapsto w+e_2$ one checks
$(\nabla f)(w+e_2)=\eu^{-2\pi\ii m w_1}(\nabla f)(w)$, so $\nabla$ descends. Its
curvature is $F_\nabla=2\pi\ii m\,dw_2\wedge dw_1$, whence
\[
\int_{\T^2}\frac{\ii}{2\pi}F_\nabla
=\int_{\T^2} m\,dw_1\wedge dw_2=m .
\]
By the Chern--Weil theorem for Hermitian line bundles, the de Rham class
\[
\left[\frac{\ii}{2\pi}F_\nabla\right]
\]
is the real image of the topological first Chern class of $L_m$
\cite[Sec.~3, Example~3]{Tamvakis2004}. Naturality under the established
bundle isomorphism $L\cong L_m$ identifies this class with the real image of $c_1(L)$.
Consequently $c_1(L)[\T^2]=m$. Since continuous complex line bundles over $\T^2$ are
classified by $c_1\in H^2(\T^2;\Z)\cong\Z$ \cite{HatcherVB,Husemoller}, $m=0$ if and
only if $L$ is topologically trivial.
\end{proof}

\begin{remark}
Only the biconditional $m\neq0\iff c_1(L)\neq0$ is consumed by the proofs below.
Nonvanishing of the Chern number is orientation-independent. Reversing only the
fundamental class changes the sign of the displayed Chern number, while an
orientation-reversing change of the ordered lattice generators also changes the
corresponding charge convention.
\end{remark}

\section{Scalarization and the phase equation}\label{sec:scalar}

From now on $\ell$ is an invariant line (Definition~\ref{def:invline}) with normalized
unit section $e$ and charge $m$.

\begin{lemma}[multiplier]\label{lem:mult}
Define the \emph{multiplier} of $\ell$ in the gauge $e$ by
\[
q(w):=\ip{e(Sw)}{A(w)e(w)},\qquad\text{so that}\qquad A(w)e(w)=q(w)\,e(Sw).
\]
Then $q$ is a continuous zero-free function, and
\begin{equation}\label{eq:qquasi}
q(w+e_1)=q(w),\qquad q(w+e_2)=\eu^{-2\pi\ii m\alpha}\,q(w).
\end{equation}
Consequently $\abs q$ is $\Z^2$-periodic, and
\[
p(w):=\frac{q(w)}{\abs{q(w)}}\;\eu^{2\pi\ii m\alpha w_2}
\]
is continuous, $\Z^2$-periodic and unimodular. Let $k=(k_1,k_2)\in\Z^2$ be its winding
vector and $g\colon\T^2\to\R$ a continuous function with
$p=\eu^{2\pi\ii(k\cdot w+g(w))}$ ($g$ exists since $p\,\eu^{-2\pi\ii k\cdot w}$ has zero
winding along both generators, hence lifts through the exponential
\cite[Prop.~1.33]{HatcherAT}; it is unique up to an additive integer).
\end{lemma}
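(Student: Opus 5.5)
The plan is to compute everything directly from the automorphy relations \eqref{eq:K2}, \eqref{eq:K3}, and the invariance of $\ell$ in \eqref{eq:invline}.

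\emph{Well-definedness and continuity.} Since $A(w)\ell(w)=\ell(Sw)$ and $e(w)\in\ell(w)$ is a unit vector, $A(w)e(w)$ lies in the line $\ell(Sw)$, which is spanned by the unit vector $e(Sw)$. Hence $A(w)e(w)=q(w)e(Sw)$ with $q(w)=\ip{e(Sw)}{A(w)e(w)}$; here conjugate-linearity in the first slot and $\abs{e}=1$ are used. Continuity of $q$ follows from continuity of $e$ and $A$. Zero-freeness follows from invertibility of $A(w)$: $\abs{q(w)}=\abs{A(w)e(w)}\ge\co(A(w))>0$.

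\emph{Quasi-periodicity.} Apply $A(w+e_j)$ to $e(w+e_j)$.
\begin{itemize}
\item For $j=1$, \eqref{eq:K3} and \eqref{eq:K2} give
\[
A(w+e_1)e(w+e_1)=A(w+e_1)U_1(w)e(w)=U_1(Sw)A(w)e(w)=q(w)\,U_1(Sw)e(Sw)=q(w)\,e(Sw+e_1).
\]
Comparing with $q(w+e_1)e(S(w+e_1))$, and noting $S(w+e_1)=Sw+e_1$, we get $q(w+e_1)=q(w)$.
\item For $j=2$, the same chain picks up factors from \eqref{eq:K3}. On the left, $e(w+e_2)=\eu^{2\pi\ii m w_1}U_2(w)e(w)$. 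On the right, $U_2(Sw)e(Sw)=\eu^{-2\pi\ii m(w_1+\alpha)}e(Sw+e_2)$, since the first coordinate of $Sw$ is $w_1+\alpha$. The $w_1$-phases cancel and leave $q(w+e_2)=\eu^{-2\pi\ii m\alpha}q(w)$, which is \eqref{eq:qquasi}.
\end{itemize}
Getting this sign right is the one point needing care.

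\emph{Consequences.} Taking moduli in \eqref{eq:qquasi} shows $\abs q$ is $\Z^2$-periodic. For $p$, the factor $\eu^{2\pi\ii m\alpha w_2}$ is $e_1$-periodic. Under $w_2\mapsto w_2+1$ it gains $\eu^{2\pi\ii m\alpha}$, which exactly cancels the $\eu^{-2\pi\ii m\alpha}$ picked up by $q/\abs q$. So $p$ is continuous, unimodular and $\Z^2$-periodic, and descends to $\T^2$.

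\emph{Winding vector and lift.} Let $k_j$ be the degree of the loop $t\mapsto p(w+te_j)$, $t\in[0,1]$. This degree is integer-valued and continuous in $w$, hence constant. Then $p\,\eu^{-2\pi\ii k\cdot w}$ is a map $\T^2\to S^1$ inducing the zero homomorphism on $\pi_1(\T^2)$. By the lifting criterion \cite[Prop.~1.33]{HatcherAT} it lifts through $t\mapsto\eu^{2\pi\ii t}$ to a continuous $g\colon\T^2\to\R$. Two lifts differ by a continuous integer-valued function on the connected space $\T^2$, hence by an additive integer constant.

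I do not expect a real obstacle; the only delicate step is the phase bookkeeping in the $e_2$-relation.
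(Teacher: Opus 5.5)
Your proposal is correct and follows essentially the same route as the paper. You apply \eqref{eq:K3} and \eqref{eq:K2} to $A(w+e_j)e(w+e_j)$, rewrite $U_j(Sw)e(Sw)$ via \eqref{eq:K3} at $Sw$, and cancel the nonzero vector $e(Sw+e_j)$; the only difference is that you treat $j=1,2$ separately, whereas the paper handles both at once through the factors $\sigma_j$. Your lifting-criterion argument for $g$ is the one the paper cites in the statement itself.
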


\begin{proof}
$A(w)e(w)$ is a nonzero vector in $A(w)\ell(w)=\ell(Sw)$ and $e(Sw)$ spans that line, so
$q(w)\neq0$; continuity is clear, and $\abs{q(w)}=\abs{A(w)e(w)}$. For the
quasi-periodicity, apply \eqref{eq:K3}, \eqref{eq:K2} and the definition three times:
\begin{align*}
A(w+e_j)\,e(w+e_j)&=\sigma_j(w)\,A(w+e_j)U_j(w)e(w)
=\sigma_j(w)\,U_j(Sw)A(w)e(w)\\
&=\sigma_j(w)\,q(w)\,U_j(Sw)e(Sw),
\end{align*}
where $\sigma_1\equiv1$, $\sigma_2(w)=\eu^{2\pi\ii m w_1}$, while
$U_j(Sw)e(Sw)=\sigma_j(Sw)^{-1}e(Sw+e_j)$ and, by definition,
$A(w+e_j)\,e(w+e_j)=q(w+e_j)\,e(Sw+e_j)$. Cancelling the nonzero vector
$e(Sw+e_j)$,
\[
q(w+e_j)=\sigma_j(w)\,\sigma_j(Sw)^{-1}\,q(w),
\]
which is \eqref{eq:qquasi}: for $j=1$ trivially, and for $j=2$ because
$\sigma_2(w)/\sigma_2(Sw)=\eu^{-2\pi\ii m\alpha}$.
Periodicity of $p$: in the $e_1$-direction both factors are
invariant; in the $e_2$-direction the two factors change by $\eu^{-2\pi\ii m\alpha}$ and
$\eu^{+2\pi\ii m\alpha}$.
\end{proof}

\begin{lemma}[scalarization]\label{lem:scalar}
Assume $S$ ergodic. Let $F$ be a measurable section carried by $\ell$ satisfying
\eqref{eq:eigen2} with $\lambda\neq0$ and $F\neq0$ on a set of positive measure. Put
$f:=\ip{e}{F}$ (measurable). Then $F=fe$ a.e., and:
\begin{enumerate}
\item[\textup{(C1)}] $f(w+e_1)=f(w)$ and $f(w+e_2)=\eu^{-2\pi\ii m w_1}f(w)$ a.e.;
in particular $\abs f=\abs F$ descends to a measurable function on $\T^2$;
\item[\textup{(C2)}] $q(w)f(w)=\lambda f(Sw)$ a.e.;
\item[\textup{(C3)}] $f\neq0$ a.e.
\end{enumerate}
\end{lemma}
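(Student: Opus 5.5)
The plan is to treat the three items in turn, each reduced to a pointwise identity that holds off a null set.

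\emph{The decomposition $F=fe$.} Since $e(w)$ is a unit vector spanning $\ell(w)$ and $F(w)\in\ell(w)$ for a.e.\ $w$, we have $F(w)=\ip{e(w)}{F(w)}\,e(w)=f(w)e(w)$ off a null set. Measurability of $f$ is immediate, because $e$ is continuous and $F$ is measurable.

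\emph{(C1).} I substitute $F=fe$ into the section sewing \eqref{eq:section}, $F(w+e_j)=U_j(w)F(w)$, and then use \eqref{eq:K3}:
\[
f(w+e_j)\,e(w+e_j)=f(w)\,U_j(w)e(w)=f(w)\,\sigma_j(w)^{-1}e(w+e_j),
\]
with $\sigma_1\equiv1$ and $\sigma_2=\eu^{2\pi\ii m w_1}$. Cancelling the unit vector $e(w+e_j)$ gives (C1). These translates of null sets are null, and countably many of them are involved, so a single full-measure set serves. The modulus $\abs f=\abs F$ is then $\Z^2$-periodic a.e., hence descends. This is essentially the computation already done in Lemma~\ref{lem:degree}, now carried out in the measurable category.

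\emph{(C2).} I insert $F=fe$ into \eqref{eq:eigen2} and use $A(w)e(w)=q(w)e(Sw)$ from Lemma~\ref{lem:mult}. This gives
\[
f(w)q(w)e(Sw)=\lambda f(Sw)e(Sw).
\]
Cancelling $e(Sw)\neq0$ yields (C2) a.e.; here one needs $S$ to preserve null sets, which holds because $S$ is a translation.

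\emph{(C3), the only step using ergodicity.} Let $Z=\{w:f(w)=0\}$. By (C1), $Z$ is $\Z^2$-invariant up to null sets, so it descends to a measurable set $\bar Z\subset\T^2$. By (C2), together with $q\neq0$ everywhere and $\lambda\neq0$, we get $f(w)=0\iff f(Sw)=0$ a.e., so $\bar Z$ is $S$-invariant mod null sets. Ergodicity forces $\mu(\bar Z)\in\{0,1\}$. The hypothesis that $F\neq0$ on a set of positive measure rules out $\mu(\bar Z)=1$, so $f\neq0$ a.e.

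The only care needed, and the main obstacle, is bookkeeping: converting the invariance of $Z$ on $\R^2$ into invariance of a set on $\T^2$ modulo null sets. I would handle this by working with the periodic function $\abs f$, which is a genuine measurable function on $\T^2$, and then applying ergodicity to the indicator $\mathbf 1_{\{\abs f=0\}}$.
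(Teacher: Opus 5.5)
Your proof is correct and follows the paper's argument step by step. You obtain the decomposition $F=fe$ from $\abs e=1$, prove (C2) by cancelling the unit vector $e(Sw)$, and prove (C3) by showing that $\{f=0\}$ descends to an $S$-invariant set modulo null sets and invoking ergodicity. The only cosmetic difference is in (C1): you cancel $e(w+e_j)$ after substituting $F=fe$ on both sides, whereas the paper computes $\ip{\sigma_j U_j e}{U_j F}=\overline{\sigma_j}\,f$ using unitarity of $U_j$; the two are equivalent.
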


\begin{proof}
$F(w)\in\ell(w)$ a.e.\ and $\abs e=1$ give $F=\ip{e}{F}e=fe$ a.e., with
$\abs F=\abs f$. (C1): using \eqref{eq:section} and \eqref{eq:K3}, for a.e.\ $w$,
\[
f(w+e_j)=\ip{\sigma_j(w)U_j(w)e(w)}{U_j(w)F(w)}
=\overline{\sigma_j(w)}\,f(w)=\sigma_j(w)^{-1}f(w),
\]
by unitarity of $U_j$ and conjugate-linearity in the \emph{first} slot, the middle
expression carrying a complex conjugate $\overline{\sigma_j(w)}$ and the last equality
holding because $\abs{\sigma_j}=1$. (C2): insert $F=fe$
into \eqref{eq:eigen2} and use $A e=q\,e\circ S$:
$q(w)f(w)\,e(Sw)=\lambda f(Sw)\,e(Sw)$; cancel the unit vector. (C3): by (C1) the set
$\{f=0\}$ is $\Z^2$-invariant modulo null, hence descends to a measurable subset of
$\T^2$; by (C2) with $q\neq0\neq\lambda$ it is $S$-invariant modulo null; ergodicity
gives it measure $0$ or $1$, and measure $1$ is excluded because
$\mu\{F\neq0\}=\mu\{f\neq0\}>0$.
\end{proof}

\begin{lemma}[descalarization]\label{lem:desc}
Conversely, let $\ell$ be a continuous invariant line with normalized unit section $e$ of
charge $m$ and multiplier $q$, let $\lambda\in\C$, and let $f\colon\R^2\to\C$ be measurable
and satisfy \textup{(C1)} and \textup{(C2)} of Lemma~\ref{lem:scalar}. Then $F:=fe$ is a
measurable section of $V$ carried by $\ell$ with $A(w)F(w)=\lambda F(Sw)$ a.e.; moreover
$\abs F=\abs f$, so $F$ is a nonzero measurable eigensection as soon as $f\neq0$ on a set
of positive measure, and $\norm F=\bigl(\int_{[0,1)^2}\abs f^2\bigr)^{1/2}$.
\end{lemma}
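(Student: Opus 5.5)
The plan is to reverse the computations of Lemma~\ref{lem:scalar}, using only the automorphies \eqref{eq:K3} of $e$ and the identity $A e=q\,e\circ S$ from Lemma~\ref{lem:mult}. Write $\sigma_1\equiv1$ and $\sigma_2(w)=\eu^{2\pi\ii m w_1}$, so that \eqref{eq:K3} reads $e(w+e_j)=\sigma_j(w)U_j(w)e(w)$ and \textup{(C1)} reads $f(w+e_j)=\sigma_j(w)^{-1}f(w)$ a.e.

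\emph{Measurability, sewing and carrying.} $F=fe$ is the product of a measurable scalar function and a continuous vector function, hence measurable, and $F(w)\in\ell(w)$ holds for every $w$ because $e(w)$ spans $\ell(w)$. For the section property \eqref{eq:section}, compute for a.e.\ $w$:
\[
F(w+e_j)=f(w+e_j)\,e(w+e_j)=\sigma_j(w)^{-1}f(w)\,\sigma_j(w)U_j(w)e(w)=U_j(w)F(w).
\]
The $\sigma_j$ factors cancel. This cancellation is the only point where the exact pairing of the charge conventions in \textup{(C1)} and \eqref{eq:K3} matters, so I will check the sign of the exponent $\mp2\pi\ii m w_1$ there.

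\emph{Eigensection equation.} For a.e.\ $w$,
\[
A(w)F(w)=f(w)A(w)e(w)=f(w)q(w)\,e(Sw)=\lambda f(Sw)\,e(Sw)=\lambda F(Sw),
\]
using \textup{(C2)}. One bookkeeping point: the null sets in \textup{(C1)} and \textup{(C2)} are finitely many, and translation by $\tau$ or by $e_j$ preserves Lebesgue-null sets, so a.e.\ statements compose.

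\emph{Norm and nontriviality.} Since $\abs{e}=1$, we have $\abs F=\abs f$ pointwise. This function descends to $\T^2$, by \textup{(C1)} or directly by unitarity of the $U_j$, so $\norm F^2=\int_{[0,1)^2}\abs f^2$ by Definition~\ref{def:sewing}. If $f\neq0$ on a positive-measure set, then $F$ is not a.e.\ zero, and it is therefore a measurable eigensection in the sense of Definition~\ref{def:cocycle}.

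No step is a genuine obstacle. The only care needed is the conjugation convention in the sewing computation and tracking the null sets.
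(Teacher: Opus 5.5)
Your proposal is correct and follows the paper's proof essentially step for step. The charge factors in \textup{(C1)} and \eqref{eq:K3} cancel to give the sewing relation, $Ae=q\,(e\circ S)$ together with \textup{(C2)} gives the eigensection equation, and $\abs{e}=1$ gives $\abs F=\abs f$ and the norm formula; your added remarks on measurability and null sets are simply more explicit than the paper.
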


\begin{proof}
For $j=1$, (C1) and \eqref{eq:K3} give $F(w+e_1)=f(w)U_1(w)e(w)=U_1(w)F(w)$. For $j=2$,
\[
F(w+e_2)=f(w+e_2)\,e(w+e_2)
=\eu^{-2\pi\ii m w_1}f(w)\cdot\eu^{+2\pi\ii m w_1}U_2(w)e(w)=U_2(w)F(w),
\]
the two charge factors being inverse to one another --- the opposite signs in (C1) and
\eqref{eq:K3} are produced by the conjugate-linearity of the inner product in its first
slot, exactly as in the computation in the proof of Lemma~\ref{lem:scalar}. So $F$
satisfies \eqref{eq:section}. Inserting $F=fe$ and using $Ae=q\,(e\circ S)$ and (C2),
\[
A(w)F(w)=f(w)q(w)\,e(Sw)=\lambda f(Sw)\,e(Sw)=\lambda F(Sw).
\]
Finally $\abs e=1$ gives $\abs F=\abs f$. Neither ergodicity, nor $\lambda\neq0$, nor any
integrability is used.
\end{proof}

\begin{definition}[phase-coboundary hypothesis]\label{def:H2}
With $q,p,k,g$ as in Lemma~\ref{lem:mult}, we say that $(A,\ell)$ satisfies
\emph{hypothesis \textup{(H2)}} if there is a \emph{measurable} $v\colon\T^2\to\R$ with
\begin{equation}\label{eq:H2}
v(w)-v(Sw)=g(w)-\langle g\rangle\qquad\text{for a.e.\ }w,
\qquad \langle g\rangle:=\int_{\T^2}g\,d\mu .
\end{equation}
\end{definition}

\begin{lemma}[\textup{(H2)} is well posed]\label{lem:gauge}
The validity of \textup{(H2)} does not depend on the choice of normalized unit section
$e$ in Lemma~\ref{lem:charge}.
\end{lemma}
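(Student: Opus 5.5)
By Lemma~\ref{lem:charge}, any two normalized unit sections $e,e'$ for $\ell$ have the same charge $m$ and satisfy $e'=\nu e$ with $\nu\colon\R^2\to S^1$ continuous and $\Z^2$-periodic. So the plan is to track how $q,p,k,g$ change under $e\mapsto\nu e$. We will then check that $g$ changes only by a continuous additive coboundary of $S$ plus a constant, which preserves the solvability of \eqref{eq:H2}.

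\emph{Step 1: the multiplier changes by a coboundary.} From $A(w)e(w)=q(w)e(Sw)$ we get
\[
A(w)e'(w)=\nu(w)q(w)e(Sw)=\frac{\nu(w)}{\nu(Sw)}\,q(w)\,e'(Sw),
\]
so the new multiplier is $q'=q\,\nu/(\nu\circ S)$. Since $\abs\nu=1$, $\abs{q'}=\abs q$, and hence
\[
p'=p\cdot\nu/(\nu\circ S).
\]

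\emph{Step 2: split $\nu$ into winding and zero-winding parts.} Since $\nu$ descends to a continuous map $\T^2\to S^1$, write $\nu=\eu^{2\pi\ii(j\cdot w+h(w))}$ with $j\in\Z^2$ its winding vector and $h\colon\T^2\to\R$ continuous, using the same lifting argument as in Lemma~\ref{lem:mult}. Then
\[
\nu(w)/\nu(Sw)=\eu^{-2\pi\ii j\cdot\tau}\,\eu^{2\pi\ii(h(w)-h(Sw))}.
\]
The first factor is constant and the second is periodic with zero winding. Therefore $p'$ has the same winding vector, $k'=k$, and
\[
g'=g-j\cdot\tau+h-h\circ S+N
\]
for some integer constant $N$; the freedom $g\mapsto g+\text{integer}$ is absorbed here. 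Taking means gives $\langle g'\rangle=\langle g\rangle-j\cdot\tau+N$, since $\langle h\circ S\rangle=\langle h\rangle$ by invariance of $\mu$. Hence
\[
g'-\langle g'\rangle=(g-\langle g\rangle)+(h-h\circ S).
\]

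\emph{Step 3: conclusion.} If $v$ solves \eqref{eq:H2} for $g$, then $v':=v+h$ is measurable and solves it for $g'$; the roles of $e$ and $e'$ are symmetric. The same computation shows that the choice of integer constant in $g$ is immaterial, since constants cancel against the mean.

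The only delicate point is Step 2: one must make sure the winding part $j\cdot w$ of $\nu$ contributes only the constant $-j\cdot\tau$, which is removed by subtracting the mean, and that the identification of the continuous lift $g'$ holds up to an integer constant, by uniqueness of lifts on the connected space $\T^2$. Everything else is bookkeeping.
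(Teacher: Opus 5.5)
Your proposal is correct and follows the paper's proof essentially step for step: the transformation $q'=q\,\nu/(\nu\circ S)$, the splitting of $\nu$ into its winding vector and a continuous zero-winding phase $h$, and the corrected solution $v'=v+h$. You are slightly more explicit than the paper about the constant $-j\cdot\tau+N$, the symmetry between $e$ and $e'$, and why the integer ambiguity in $g$ is harmless, but the argument is the same.
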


\begin{proof}
By Lemma~\ref{lem:charge}, another normalized unit section is $e'=\nu e$ with $\nu$
continuous, $\Z^2$-periodic, unimodular; write $\nu=\eu^{2\pi\ii(\nu'\cdot w+h(w))}$
with $\nu'\in\Z^2$ its winding vector and $h\colon\T^2\to\R$ continuous. From
$Ae'=\nu\,q\,(e\circ S)=\bigl(\nu/\nu\circ S\bigr)\,q\;e'\circ S$ we get
$q'=q\cdot\nu/(\nu\circ S)$ and hence
$p'=p\,\eu^{-2\pi\ii\nu'\cdot\tau}\,\eu^{2\pi\ii(h-h\circ S)}$: the winding $k$ is
unchanged and $g'=g+(h-h\circ S)+\mathrm{const}$ modulo an integer. If $v$ solves
\eqref{eq:H2} for $g$, then $v':=v+h$ solves it for $g'$: indeed
$\langle h-h\circ S\rangle=0$, so $\langle g'\rangle=\langle g\rangle+\mathrm{const}$,
and
$v'(w)-v'(Sw)=\bigl(v(w)-v(Sw)\bigr)+\bigl(h(w)-h(Sw)\bigr)
=g'(w)-\langle g'\rangle$.
\end{proof}

\begin{lemma}[phase reduction]\label{lem:phase}
Assume the setting of Lemma~\ref{lem:scalar} and \textup{(H2)}. Let $\varphi:=f/\abs f$
(defined a.e.\ by \textup{(C3)}) and $\varphi':=\varphi\,\eu^{2\pi\ii v}$ with $v$ from
\eqref{eq:H2}. Then $\varphi'$ is measurable, $\abs{\varphi'}=1$ a.e., $\varphi'$
satisfies the automorphy \textup{(C1)}, and
\begin{equation}\label{eq:walk}
\varphi'(w+\tau)=c\;\eu^{2\pi\ii k\cdot w}\;\eu^{-2\pi\ii m\alpha w_2}\;\varphi'(w)
\qquad\text{a.e.},\qquad c=\Bigl(\tfrac{\lambda}{\abs\lambda}\Bigr)^{-1}
\eu^{2\pi\ii\langle g\rangle}\in S^1 .
\end{equation}
\end{lemma}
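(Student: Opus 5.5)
The plan is to take the phase of the scalar equation (C2) of Lemma~\ref{lem:scalar}, substitute the decomposition of $q/\abs q$ from Lemma~\ref{lem:mult}, and then absorb the zero-mean part of $g$ with the transfer function $v$ from (H2).

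\textbf{Step 1: measurability and unimodularity.} By (C3), $f\neq0$ a.e., so $\varphi=f/\abs f$ is defined a.e., measurable and unimodular. The function $v$ is given on $\T^2$; I lift it to a $\Z^2$-periodic measurable function on $\R^2$. Then $\eu^{2\pi\ii v}$ is measurable, unimodular and $\Z^2$-periodic, so $\varphi'$ is measurable with $\abs{\varphi'}=1$ a.e.

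\textbf{Step 2: automorphy (C1).} By (C1), $\abs f$ is $\Z^2$-periodic. Dividing the relations in (C1) by $\abs f$ shows that $\varphi$ satisfies the same automorphy. Multiplying by the periodic factor $\eu^{2\pi\ii v}$ does not change it, so $\varphi'$ satisfies (C1) as well.

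\textbf{Step 3: the phase equation for $\varphi$.} Take moduli in (C2): $\abs{q}\abs{f}=\abs\lambda\,\abs{f\circ S}$. Dividing (C2) by this identity gives
\[
\frac{q}{\abs q}\,\varphi=\frac{\lambda}{\abs\lambda}\,\varphi\circ S \qquad\text{a.e.}
\]
From the definitions of $p$, $k$ and $g$ in Lemma~\ref{lem:mult},
\[
\frac{q(w)}{\abs{q(w)}}=\eu^{2\pi\ii(k\cdot w+g(w))}\,\eu^{-2\pi\ii m\alpha w_2}.
\]
Hence
\[
\varphi(Sw)=\Bigl(\tfrac{\lambda}{\abs\lambda}\Bigr)^{-1}\eu^{2\pi\ii k\cdot w}\,\eu^{2\pi\ii g(w)}\,\eu^{-2\pi\ii m\alpha w_2}\,\varphi(w).
\]

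\textbf{Step 4: absorbing $g$.} Multiply both sides of the last identity by $\eu^{2\pi\ii v(Sw)}$, and write $\varphi(w)=\varphi'(w)\eu^{-2\pi\ii v(w)}$ on the right. The total exponent of the $g$- and $v$-terms is then
\[
g(w)+v(Sw)-v(w).
\]
By \eqref{eq:H2} this equals $\langle g\rangle$ a.e. What remains is exactly \eqref{eq:walk}, with $c=(\lambda/\abs\lambda)^{-1}\eu^{2\pi\ii\langle g\rangle}$, which is unimodular.

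\textbf{Main obstacle: null sets.} The algebra is routine; the care lies in the a.e.\ bookkeeping. Identity \eqref{eq:H2} holds a.e.\ on $\T^2$. Its lift to $\R^2$ fails only on a countable union of $\Z^2$-translates of a null set, which is again null. Likewise, (C2) and (C3) fail on null sets, and these are preserved under the translation $S$, so all exceptional sets combine into a single null set. A second point to check is that $g$ and $v$ are $\Z^2$-periodic, so that evaluating them at points of $\R^2$ is consistent. Finally, the sign conventions need care, namely the factor $\eu^{-2\pi\ii m\alpha w_2}$ coming from $p$, and the fact that (H2) is written as $v-v\circ S$, not $v\circ S-v$.
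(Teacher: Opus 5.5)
Your proof is correct and follows the paper's argument essentially step for step. You divide (C2) by its modulus identity, rewrite $q/\abs q$ through $p$, let $\varphi$ inherit (C1) from the $\Z^2$-periodicity of $\abs f$, and multiply by $\eu^{2\pi\ii v(Sw)}$, using (H2) in the form $v(Sw)=v(w)-g(w)+\langle g\rangle$. Your null-set bookkeeping (periodic lift of $v$, invariance of null sets under $S$ and under $\Z^2$-translates) is a harmless elaboration of what the paper leaves implicit.
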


\begin{proof}
Taking moduli in (C2), $\abs q\,\abs f=\abs\lambda\,\abs{f\circ S}$ a.e.; dividing (C2)
by this identity (all factors nonzero a.e.),
\[
\varphi(Sw)=\Bigl(\tfrac{\lambda}{\abs\lambda}\Bigr)^{-1}
\frac{q(w)}{\abs{q(w)}}\,\varphi(w)
=\Bigl(\tfrac{\lambda}{\abs\lambda}\Bigr)^{-1}
\eu^{2\pi\ii(k\cdot w+g(w))}\,\eu^{-2\pi\ii m\alpha w_2}\,\varphi(w)\quad\text{a.e.}
\]
No modulus solvability is used; the modulus identity divides out exactly. $\varphi$
inherits (C1) because $\abs f$ is $\Z^2$-periodic. Multiplying by
$\eu^{2\pi\ii v(Sw)}$ and using \eqref{eq:H2} in the form
$v(Sw)=v(w)-g(w)+\langle g\rangle$ (a.e.)\ gives \eqref{eq:walk}; the factor
$\eu^{2\pi\ii v}$ is $\Z^2$-periodic and unimodular, so $\varphi'$ retains (C1) and
$\abs{\varphi'}=1$ a.e.
\end{proof}

\begin{lemma}[the charge-free case: a global logarithm]\label{lem:log}
Suppose $m=0$. Then $q$ is $\Z^2$-periodic, i.e.\ a continuous zero-free function on
$\T^2$, and $p=q/\abs q$. Moreover $q$ admits a continuous logarithm on $\T^2$ if and only
if $k=0$; and if $q$ is in addition real analytic and $k=0$, then
\[
h:=\log\abs q+2\pi\ii g
\]
is real analytic on $\T^2$, satisfies $\eu^{h}=q$, and is unique modulo $2\pi\ii\Z$; hence
$\langle h\rangle:=\int_{\T^2}h\,d\mu$ is well defined modulo $2\pi\ii\Z$ and
$\eu^{\langle h\rangle}$ is well defined.
\end{lemma}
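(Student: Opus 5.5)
The plan is to read everything off Lemma~\ref{lem:mult} with $m=0$, and to reduce the logarithm question to lifting a unimodular periodic function through the exponential.

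\emph{Periodicity.} With $m=0$, \eqref{eq:qquasi} reads $q(w+e_j)=q(w)$, so $q$ is $\Z^2$-periodic and descends to a continuous zero-free function on $\T^2$. The correction factor $\eu^{2\pi\ii m\alpha w_2}$ in the definition of $p$ equals $1$, hence $p=q/\abs q$.

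\emph{Logarithm iff $k=0$.} Since $\abs q>0$ is continuous and periodic, $\log\abs q$ is a continuous function on $\T^2$. Consequently $q$ has a continuous logarithm on $\T^2$ if and only if $p=q/\abs q$ has a continuous lift $p=\eu^{2\pi\ii\theta}$ with $\theta\colon\T^2\to\R$.

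If $k=0$, Lemma~\ref{lem:mult} gives $p=\eu^{2\pi\ii g}$ with $g$ continuous on $\T^2$, so $\log\abs q+2\pi\ii g$ is a logarithm of $q$.

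Conversely, suppose $\theta$ is continuous on $\T^2$ with $p=\eu^{2\pi\ii\theta}$. Compare this with $p=\eu^{2\pi\ii(k\cdot w+g)}$ on $\R^2$. The difference $k\cdot w+g(w)-\theta(w)$ is continuous and integer-valued on connected $\R^2$, hence constant. Because $g-\theta$ is periodic, the function $k\cdot w$ must then be periodic up to a constant, i.e.\ $k\cdot w$ is bounded on $\R^2$. This forces $k=0$. Equivalently, the winding of $p$ along each generator loop is $k_j$, and a function with a periodic lift has zero winding.

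\emph{Real analyticity and uniqueness.} Assume $q$ is real analytic and $k=0$. Then $\abs q=(q\bar q)^{1/2}$ is a positive real-analytic function, so $\log\abs q$ is real analytic.

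For $g$, we show that its derivatives are real analytic. Locally, on a small disc $D$, choose a branch $\log$ of the logarithm near the value $q(w_0)$. Then $2\pi\ii g=\log q-\log\abs q$ up to an additive constant in $2\pi\ii\Z$, because both sides are continuous lifts of $p$ and hence differ by a locally constant integer multiple of $2\pi\ii$. The right-hand side is real analytic on $D$, so $g$ is real analytic near each point. Thus $h=\log\abs q+2\pi\ii g$ is real analytic and satisfies $\eu^{h}=\abs q\,p=q$.

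For uniqueness, let $h'$ be any continuous logarithm of $q$. Then $(h-h')/(2\pi\ii)$ is continuous and integer-valued on connected $\T^2$, hence a constant in $\Z$. It follows that $\langle h\rangle$ is well defined modulo $2\pi\ii\Z$ (using $\mu(\T^2)=1$), and therefore $\eu^{\langle h\rangle}$ is well defined.

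\emph{Main obstacle.} The only nonformal point is the converse direction "logarithm $\Rightarrow k=0$". Here one must be careful to use the decomposition $p=\eu^{2\pi\ii(k\cdot w+g)}$ on $\R^2$ and the constancy of integer-valued continuous functions on $\R^2$, rather than arguing on $\T^2$ directly. Everything else is a local computation.
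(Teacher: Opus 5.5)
Your proof is correct. It differs from the paper's mainly in how you obtain the equivalence ``continuous logarithm $\iff k=0$''.

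The paper argues homotopy-theoretically. Since $\T^2=K(\Z^2,1)$ and $\C^\times\simeq K(\Z,1)$, one has $[\T^2,\C^\times]\cong\Z^2$ with $q\mapsto k$. The lifting criterion for $\exp\colon\C\to\C^\times$ then says a lift exists iff $q_*=0$ on $\pi_1$, i.e.\ iff $k=0$. The paper afterwards identifies any such lift with $\log\abs q+2\pi\ii g$ modulo $2\pi\ii\Z$. It gets real analyticity of the lift in one stroke, since the lift is locally a holomorphic branch of $\log$ composed with $q$.

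You instead build the logarithm directly from the $g$ supplied by Lemma~\ref{lem:mult}. You prove the converse by comparing two lifts of $p$ on $\R^2$: their integer-valued difference is constant, so $k\cdot w$ would be periodic, which forces $k=0$. You then check real analyticity of $\log\abs q$ and of $g$ separately.

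Your route is more elementary and self-contained once Lemma~\ref{lem:mult} is available. The existence of $g$ there already rests on the lifting criterion, so the real saving is in the converse direction. The paper's route is more conceptual: it exhibits $k$ as the homotopy class of $q$, which is the viewpoint Remark~\ref{rem:twoinv} uses when it calls $k$ an intrinsic class in $H^1(\T^2;\Z)$.

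One small slip in wording: you say you will show that the ``derivatives'' of $g$ are real analytic. What you actually, and correctly, show is that $g$ itself is real analytic near each point.
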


\begin{proof}
Periodicity is \eqref{eq:qquasi} at $m=0$, and the extra factor $\eu^{2\pi\ii m\alpha w_2}$
in the definition of $p$ (Lemma~\ref{lem:mult}) is then trivial. Since $\T^2$ is a
$K(\Z^2,1)$ and $\C^\times$ is homotopy equivalent to $S^1=K(\Z,1)$,
$[\T^2,\C^\times]\cong\mathrm{Hom}(\Z^2,\Z)\cong\Z^2$, the isomorphism sending $q$ to the
winding vector of $q/\abs q=p$, i.e.\ to $k$. A continuous logarithm is precisely a lift of
$q$ through the covering $\exp\colon\C\to\C^\times$, whose total space is simply connected,
so by the lifting criterion \cite[Prop.~1.33]{HatcherAT}---the same one used in
Lemma~\ref{lem:mult}---such a lift exists iff $q_*=0$ on $\pi_1$, iff $k=0$. Any lift $h$
has $\operatorname{Re}h=\log\abs q$ and $\eu^{2\pi\ii(\operatorname{Im}h/2\pi)}=p$, so
$\operatorname{Im}h/2\pi$ is a continuous zero-winding phase of $p$ and therefore equals
$g$ modulo an integer. As $\exp$ is a local biholomorphism and $q$ is real analytic and
zero free, $h$ is locally a holomorphic branch of $\log$ composed with a real-analytic
map, hence real analytic; being globally defined and continuous, it is real analytic on
$\T^2$. Two lifts differ by a continuous $2\pi\ii\Z$-valued function on a connected space,
hence by a constant.
\end{proof}

\begin{remark}[the two invariants, read together]\label{rem:twoinv}
Lemma~\ref{lem:log} is the reason exactly two invariants govern this problem. The charge $m$
is the obstruction to trivializing $L$; once $L$ is trivialized the isomorphism
$L\to S^*L$ induced by $A$ becomes a function $q\colon\T^2\to\C^\times$, whose class
$k\in H^1(\T^2;\Z)$ is intrinsic --- changing the trivialization multiplies $q$ by
$\nu/(\nu\circ S)$, of winding $[\nu]-S^*[\nu]=0$ because $S$ is homotopic to the identity,
which is the content of Lemma~\ref{lem:gauge}. After both vanish, only an analytic
small-divisor problem remains; that is what Section~\ref{sec:analytic} exploits in both
directions.
\end{remark}

\section{Magnetic rigidity and the proof of Theorem A}\label{sec:magnetic}

\begin{definition}[charge space; magnetic translations]\label{def:Hm}
For $m\in\Z$ let $H_m$ be the space of (classes of) measurable $f\colon\R^2\to\C$ with
\[
f(w+e_1)=f(w),\qquad f(w+e_2)=\eu^{-2\pi\ii m w_1}f(w)\quad\text{a.e.},\qquad
\norm f^2:=\int_{[0,1)^2}\abs f^2\,dw<\infty .
\]
Restriction to $[0,1)^2$ is an isometry of $H_m$ onto $L^2([0,1)^2)$: the multipliers
are unimodular, and they satisfy the $\Z^2$-cocycle consistency (both routes from $w$ to
$w+e_1+e_2$ give the factor $\eu^{-2\pi\ii m w_1}$, using $\eu^{-2\pi\ii m}=1$), so
extension by the automorphy inverts the restriction. In particular $H_m$ is a separable
Hilbert space. For $s\in\R^2$ and $\nu\in\Z^2$ define on $H_m$
\[
(M_sf)(w):=\eu^{2\pi\ii m s_1w_2}\,f(w+s),\qquad
(E_\nu f)(w):=\eu^{2\pi\ii\nu\cdot w}f(w).
\]
\end{definition}

\begin{lemma}[magnetic translation identities]\label{lem:mag}
$M_s$ and $E_\nu$ are unitary on $H_m$, and for all $s,t\in\R^2$, $\nu\in\Z^2$:
\begin{equation}\label{eq:comm}
M_sM_t=\eu^{2\pi\ii m(t_1s_2-s_1t_2)}\,M_tM_s,\qquad
M_sE_\nu=\eu^{2\pi\ii\nu\cdot s}\,E_\nu M_s .
\end{equation}
\end{lemma}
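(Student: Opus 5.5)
The plan is to verify everything by direct computation on representatives, using only the automorphy defining $H_m$ and the $\Z^2$-periodicity of $\abs f$. There are three steps: first show that $M_s$ and $E_\nu$ map $H_m$ into itself, then show they are unitary, then compute both sides of each identity in \eqref{eq:comm} pointwise. Every step is an exponent bookkeeping exercise, and no earlier lemma beyond Definition~\ref{def:Hm} is needed.

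\emph{Invariance of $H_m$.} For $E_\nu$ this is immediate, since $\eu^{2\pi\ii\nu\cdot w}$ is $\Z^2$-periodic for $\nu\in\Z^2$. For $M_s$, I check the two automorphy relations.
\begin{itemize}
\item Shifting by $e_1$ leaves the prefactor $\eu^{2\pi\ii m s_1w_2}$ unchanged and uses $f(w+s+e_1)=f(w+s)$.
\item Shifting by $e_2$ multiplies the prefactor by $\eu^{2\pi\ii m s_1}$. The relation $f(w+s+e_2)=\eu^{-2\pi\ii m(w_1+s_1)}f(w+s)$ then contributes $\eu^{-2\pi\ii m w_1}\eu^{-2\pi\ii m s_1}$. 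The $s_1$-dependent phases cancel, leaving exactly $(M_sf)(w+e_2)=\eu^{-2\pi\ii m w_1}(M_sf)(w)$.
\end{itemize}
Measurability and the a.e.\ qualifiers are preserved under translation, so both operators map $H_m$ into itself.

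\emph{Unitarity.} Each operator preserves the norm because $\abs{(M_sf)(w)}=\abs{f(w+s)}$ and $\abs f$ descends to $\T^2$. Consequently $\int_{[0,1)^2}\abs{f(w+s)}^2\,dw=\int_{\T^2}\abs f^2\,d\mu$ by translation invariance of Haar measure. The same holds trivially for $E_\nu$. For surjectivity, a direct computation gives $M_{-s}M_s=\eu^{-2\pi\ii m s_1s_2}\,\mathrm{Id}$ and $M_sM_{-s}=\eu^{-2\pi\ii m s_1 s_2}\,\mathrm{Id}$. So $M_s$ is invertible with inverse a unimodular multiple of $M_{-s}$. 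Likewise $E_\nu^{-1}=E_{-\nu}$. An invertible isometry is unitary.

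\emph{Commutation relations.} Expand $M_sM_t$ on a representative:
\[
(M_sM_tf)(w)=\eu^{2\pi\ii m s_1w_2}\,\eu^{2\pi\ii m t_1(w_2+s_2)}f(w+s+t).
\]
Interchanging $s$ and $t$ gives $\eu^{2\pi\ii m t_1w_2}\eu^{2\pi\ii m s_1(w_2+t_2)}f(w+s+t)$. The quotient of the two is the constant $\eu^{2\pi\ii m(t_1s_2-s_1t_2)}$, which is the first identity. For the second, compute
\[
(M_sE_\nu f)(w)=\eu^{2\pi\ii m s_1w_2}\eu^{2\pi\ii\nu\cdot(w+s)}f(w+s)=\eu^{2\pi\ii\nu\cdot s}(E_\nu M_sf)(w).
\]

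There is no genuine obstacle here. The only point needing care is the sign and phase bookkeeping in the $e_2$-automorphy of $M_sf$. That is where the choice of the gauge factor $\eu^{2\pi\ii m s_1 w_2}$, matched to the charge-$m$ automorphy $\eu^{-2\pi\ii m w_1}$, is essential: with the wrong prefactor a residual $w$-dependent phase would survive and $M_s$ would not preserve $H_m$.
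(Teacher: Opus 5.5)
Your proof is correct and follows the paper's argument: you check the $e_2$-automorphy of $M_sf$ pointwise, get norm preservation from $\Z^2$-periodicity of $\abs f$ and translation invariance, and compare prefactors for both commutation relations. Your surjectivity check via $M_{-s}M_s=M_sM_{-s}=\eu^{-2\pi\ii m s_1s_2}\,\mathrm{Id}$ is a small but useful addition, since the paper's proof verifies only that $M_s$ is an isometry.
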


\begin{proof}
$M_s$ preserves the automorphy: the $e_1$-relation is clear; for $e_2$,
\[
\begin{aligned}
(M_sf)(w+e_2)&=\eu^{2\pi\ii m s_1(w_2+1)}f(w+s+e_2)\\
&=\eu^{2\pi\ii m s_1w_2}\,\eu^{2\pi\ii m s_1}\,\eu^{-2\pi\ii m(w_1+s_1)}f(w+s)
=\eu^{-2\pi\ii m w_1}(M_sf)(w).
\end{aligned}
\]
Unitarity: the prefactor is unimodular, $\abs f^2$ is $\Z^2$-periodic, and Lebesgue
measure on the torus is translation-invariant. $E_\nu$ is a unimodular periodic
multiplier. For \eqref{eq:comm}:
\begin{align*}
(M_sM_tf)(w)&=\eu^{2\pi\ii m s_1w_2}\,\eu^{2\pi\ii m t_1(w_2+s_2)}f(w+s+t),\\
(M_tM_sf)(w)&=\eu^{2\pi\ii m t_1w_2}\,\eu^{2\pi\ii m s_1(w_2+t_2)}f(w+t+s);
\end{align*}
the ratio of prefactors is $\eu^{2\pi\ii m(t_1s_2-s_1t_2)}$. Similarly
$(M_sE_\nu f)(w)=\eu^{2\pi\ii m s_1w_2}\eu^{2\pi\ii\nu\cdot(w+s)}f(w+s)
=\eu^{2\pi\ii\nu\cdot s}(E_\nu M_sf)(w)$.
\end{proof}

\begin{lemma}[magnetic rigidity]\label{lem:rigid}
Let $m\in\Z\setminus\{0\}$, $k\in\Z^2$, $c\in S^1$, and $\tau=(\alpha,\beta)$ with
$\beta\notin\Q$ \emph{or} $\alpha\notin\Q$. Then the unitary operator
\[
W:=\bar c\,E_{-k}\,M_\tau\ \colon\ H_m\to H_m
\]
has no nonzero fixed vector. More precisely, for every $s\in\R^2$,
\begin{equation}\label{eq:WM}
W M_s=\chi(s)\,M_sW,\qquad
\chi(s)=\eu^{2\pi\ii\left[(k_1+m\beta)s_1+(k_2-m\alpha)s_2\right]},
\end{equation}
and if $W\psi=\psi$ with $\norm\psi\neq0$, the family
$\{M_s\psi\}$ contains uncountably many pairwise orthogonal vectors of equal nonzero
norm, contradicting separability of $H_m$.
\end{lemma}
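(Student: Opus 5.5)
The plan is to derive the covariance relation \eqref{eq:WM} directly from the two identities \eqref{eq:comm} of Lemma~\ref{lem:mag}, and then run the classical eigenvector-multiplication argument inside the separable space $H_m$. First, $W$ is unitary on $H_m$ because it is the product of the unimodular scalar $\bar c$ and the unitaries $E_{-k}$ and $M_\tau$ of Lemma~\ref{lem:mag}.

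For \eqref{eq:WM} I will move $M_s$ leftward through $W=\bar c\,E_{-k}M_\tau$ in two steps.
\begin{enumerate}
\item Apply the first identity in \eqref{eq:comm} with $(s,t)\to(\tau,s)$. This gives $M_\tau M_s=\eu^{2\pi\ii m(s_1\beta-\alpha s_2)}M_sM_\tau$.
\item Apply the second identity with $\nu=-k$. It reads $M_sE_{-k}=\eu^{-2\pi\ii k\cdot s}E_{-k}M_s$, that is, $E_{-k}M_s=\eu^{2\pi\ii k\cdot s}M_sE_{-k}$.
\end{enumerate}
Multiplying the two scalar factors yields $WM_s=\chi(s)M_sW$ with $\chi(s)=\eu^{2\pi\ii[(k_1+m\beta)s_1+(k_2-m\alpha)s_2]}$, as claimed. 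The only delicate point is sign bookkeeping in the exponent $m(t_1s_2-s_1t_2)$ after the substitution $(s,t)\to(\tau,s)$. I expect this to be the main (if modest) obstacle, and I would double-check it against the explicit prefactors computed in the proof of Lemma~\ref{lem:mag}.

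Next, suppose $W\psi=\psi$ with $\psi\neq0$. Then \eqref{eq:WM} gives $W(M_s\psi)=\chi(s)M_s\psi$, so each $M_s\psi$ is an eigenvector of $W$ with eigenvalue $\chi(s)$. Each also has norm $\norm\psi\neq0$, since $M_s$ is unitary. Eigenvectors of a unitary operator belonging to distinct eigenvalues are orthogonal: if $Wx=ax$ and $Wy=by$ with $\abs a=\abs b=1$, then $\ip{x}{y}=\ip{Wx}{Wy}=\bar a b\ip{x}{y}$.

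It remains to show that $\chi$ takes uncountably many values. Put $\gamma:=(k_1+m\beta,\;k_2-m\alpha)\in\R^2$. Since $m\neq0$ and $k_j\in\Z$, we have $k_1+m\beta\neq0$ if $\beta\notin\Q$, and $k_2-m\alpha\neq0$ if $\alpha\notin\Q$; so $\gamma\neq0$. Restrict to the ray $s=t\gamma/\abs\gamma^2$, $t\in[0,1)$. Along it $\chi(s)=\eu^{2\pi\ii t}$, and these values are pairwise distinct. This produces uncountably many pairwise orthogonal vectors $M_s\psi$ of common norm $\norm\psi$, hence pairwise at distance $\sqrt2\,\norm\psi$. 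That is impossible in the separable Hilbert space $H_m$ of Definition~\ref{def:Hm}, so no nonzero fixed vector exists.
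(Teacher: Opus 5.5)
Your proposal is correct and follows essentially the paper's proof: you derive the covariance relation from the two identities of Lemma~\ref{lem:mag}, observe that each $M_s\psi$ is a $\chi(s)$-eigenvector of the unitary $W$, and conclude from orthogonality and separability of $H_m$. The only difference is cosmetic: you restrict to the ray $s=t\gamma/\abs\gamma^2$, which handles the cases $\beta\notin\Q$ and $\alpha\notin\Q$ together, whereas the paper restricts to the $s_1$-axis or the $s_2$-axis according to the case.
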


\begin{proof}
For \eqref{eq:WM}, apply \eqref{eq:comm} to the pair $(\tau,s)$:
$M_\tau M_s=\eu^{2\pi\ii m(s_1\beta-\alpha s_2)}M_sM_\tau$, and
$E_{-k}M_s=\eu^{2\pi\ii k\cdot s}M_sE_{-k}$; multiplying by $\bar c$ and composing gives
\eqref{eq:WM}. Suppose $W\psi=\psi$, $\psi\neq0$. Then for every $s$,
$W(M_s\psi)=\chi(s)M_sW\psi=\chi(s)(M_s\psi)$ and $\norm{M_s\psi}=\norm\psi\neq0$:
each $M_s\psi$ is an eigenvector of the unitary $W$ with eigenvalue $\chi(s)$.
Eigenvectors of a unitary for distinct eigenvalues are orthogonal: if $Wx=\mu x$,
$Wy=\nu y$ with $\abs\mu=\abs\nu=1$ then
$\ip xy=\ip{Wx}{Wy}=\bar\mu\nu\ip xy$. Since $m\neq0$ and $\beta\notin\Q$ (say), the
number $k_1+m\beta$ is irrational, in particular nonzero; hence
$s_1\mapsto\chi\bigl((s_1,0)\bigr)=\eu^{2\pi\ii(k_1+m\beta)s_1}$ is injective on
$[0,\delta)$ for any $0<\delta<\min\{1,\abs{k_1+m\beta}^{-1}\}$ (if
$\chi((s_1,0))=\chi((s_1',0))$ then $(k_1+m\beta)(s_1-s_1')\in\Z$, and
$\abs{(k_1+m\beta)(s_1-s_1')}<1$ forces $s_1=s_1'$). Thus
$\{M_{(s_1,0)}\psi:\ 0\le s_1<\delta\}$ is an uncountable family of pairwise orthogonal
vectors of norm $\norm\psi$. After normalization these form an uncountable orthonormal
family; the open balls of radius $\sqrt2/2$ around its members are pairwise disjoint, and
a separable metric space contains no uncountable family of pairwise disjoint open balls.
If instead $\alpha\notin\Q$, run the same argument with $s=(0,s_2)$ and
$k_2-m\alpha$. No continuity of $s\mapsto M_s\psi$ is used.
\end{proof}

\begin{theorem}[Chern-class obstruction]\label{thm:A}
Let $n\ge1$ and let $(U_1,U_2)$ be a continuous unitary sewing system with values in
$U(n)$ \textup{(Definition~\ref{def:sewing})}, $A$ a continuous equivariant cocycle
\textup{(Definition~\ref{def:cocycle})}, and assume $1,\alpha,\beta$ rationally
independent over $\Q$. Let $\ell$ be a continuous invariant line
\textup{(Definition~\ref{def:invline})} whose line bundle $L$ has $c_1(L)\neq0$, and
assume $(A,\ell)$ satisfies \textup{(H2)} \textup{(Definition~\ref{def:H2})}. Then for
every $\lambda\in\C$, every measurable section $F$ of $V$ satisfying
$A(w)F(w)=\lambda F(w+\tau)$ a.e.\ and $F(w)\in\ell(w)$ a.e.\ vanishes almost
everywhere. In particular $L$ carries no measurable eigensection, and a fortiori no
eigensection in $\mathcal H_U$.
\end{theorem}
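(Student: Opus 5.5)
We argue by contradiction, reducing everything to the magnetic rigidity Lemma~\ref{lem:rigid}.

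The case $\lambda=0$ is immediate from Definition~\ref{def:cocycle}: $A(w)$ is invertible, so $F=0$ a.e. So fix $\lambda\neq0$. Suppose $F$ is carried by $\ell$, satisfies the eigensection equation, and is nonzero on a set of positive measure. Fix a normalized unit section $e$ of charge $m$ (Lemma~\ref{lem:charge}). By Lemma~\ref{lem:degree}, $c_1(L)\neq0$ gives $m\neq0$. By Lemma~\ref{lem:gauge}, (H2) holds in this gauge, with some measurable $v$.

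The first step is scalarization. Lemma~\ref{lem:scalar} applies, since ergodicity of $S$ is our standing assumption. It gives $f=\ip{e}{F}$ with the automorphy (C1), the scalar equation (C2), and $f\neq0$ a.e.

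The second step is the phase reduction, Lemma~\ref{lem:phase}. It produces a measurable unimodular $\varphi'$ that satisfies (C1) and the walk equation \eqref{eq:walk}. The modulus $\abs q$ has been divided out exactly, so no cohomological equation for $\log\abs q$ is needed. Since $\abs{\varphi'}=1$ a.e. and $\abs{\varphi'}^2$ is $\Z^2$-periodic, $\varphi'$ lies in $H_m$ with $\norm{\varphi'}=1$. This is where integrability is created for free, even though $F$ itself was only measurable.

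The key identification is to rewrite \eqref{eq:walk} as a fixed-point equation. By Definition~\ref{def:Hm},
\[
(M_\tau\varphi')(w)=\eu^{2\pi\ii m\alpha w_2}\,\varphi'(w+\tau)
= c\,\eu^{2\pi\ii k\cdot w}\,\varphi'(w)=c\,(E_k\varphi')(w)\quad\text{a.e.}
\]
Here the magnetic prefactor $\eu^{2\pi\ii m\alpha w_2}$ exactly cancels the factor $\eu^{-2\pi\ii m\alpha w_2}$ in the walk. Because $E_{-k}$ is inverse to $E_k$, this says $W\varphi'=\varphi'$ with $W=\bar c\,E_{-k}M_\tau$.

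Rational independence of $1,\alpha,\beta$ forces $\alpha\notin\Q$ or $\beta\notin\Q$. Together with $m\neq0$, Lemma~\ref{lem:rigid} then says $W$ has no nonzero fixed vector. This contradicts $\norm{\varphi'}=1$, so $F=0$ a.e. The statements about eigensections in $\mathcal H_U$ follow a fortiori.

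The main obstacle is bookkeeping rather than ideas. One must check that the signs in (C1), the charge factor in \eqref{eq:qquasi}, and the definition of $M_s$ all match, so that the cancellation above really yields $W=\bar c E_{-k}M_\tau$ acting on $H_m$ with the same $m$. One must also check that multiplying by $\eu^{2\pi\ii v}$ keeps $\varphi'$ in $H_m$. Since $v$ lives on $\T^2$, that factor is periodic, and this check is already covered by Lemma~\ref{lem:phase}.
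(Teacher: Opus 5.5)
Your proposal is correct and follows the paper's proof essentially step for step: scalarization, phase reduction to a unimodular $\varphi'\in H_m$ of norm one, rewriting the walk equation as the fixed-point equation $W\varphi'=\varphi'$ with $W=\bar c\,E_{-k}M_\tau$, and invoking magnetic rigidity (Lemma~\ref{lem:rigid}) with $m\neq0$. Your explicit appeal to Lemma~\ref{lem:gauge}, which ensures that \textup{(H2)} holds in the normalized gauge you fixed, makes precise a point the paper's proof leaves implicit.
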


\begin{proof}
For $\lambda=0$ invertibility of $A(w)$ gives $F=0$ a.e. Let $\lambda\neq0$ and suppose
$\mu\{F\neq0\}>0$. By Lemma~\ref{lem:charge} the line $\ell$ has a normalized unit
section $e$ with charge $m$, and $m\neq0$ by Lemma~\ref{lem:degree}. Lemmas~\ref{lem:mult},
\ref{lem:scalar} and \ref{lem:phase} produce a measurable $\varphi'$ with
$\abs{\varphi'}=1$ a.e., the automorphy (C1)---so $\varphi'\in H_m$ with
$\norm{\varphi'}=1$---and the walk equation \eqref{eq:walk}. By \eqref{eq:walk},
\[
(M_\tau\varphi')(w)=\eu^{2\pi\ii m\alpha w_2}\varphi'(w+\tau)
=c\,\eu^{2\pi\ii k\cdot w}\varphi'(w),
\]
so $E_{-k}M_\tau\varphi'=c\,\varphi'$, i.e.\ $W\varphi'=\varphi'$ for
$W=\bar cE_{-k}M_\tau$. Rational independence of $1,\alpha,\beta$ implies
$\beta\notin\Q$, so Lemma~\ref{lem:rigid} forbids a nonzero fixed vector of $W$ in
$H_m$. This contradiction proves $F=0$ a.e.
\end{proof}

\begin{remark}[what each hypothesis does]\label{rem:hyps}
Ergodicity is consumed exactly once, in (C3) ($f\neq0$ a.e.); the rigidity step needs
only $\beta\notin\Q$ (or $\alpha\notin\Q$), which follows. Hypothesis (H2) is consumed
exactly once, in Lemma~\ref{lem:phase}, and only measurable solvability is needed---%
strictly weaker than continuous solvability. Neither a Lyapunov gap, nor domination, nor
any Diophantine condition, nor membership of $F$ in $\mathcal H_U$ enters. The modulus
half of the multiplier ($\log\abs q$) is never solved for: it divides out.
\end{remark}

\begin{proposition}[the charge-zero case: classical winding obstruction]\label{prop:winding}
In the setting of Theorem~\ref{thm:A} but with $c_1(L)=0$ (i.e.\ $m=0$) and $k\neq0$,
the same conclusion holds. This statement is classical in substance---for $m=0$,
$H_0=L^2(\T^2)$, the operators $M_s$ are plain translations, and the obstruction is the
Anzai--Furstenberg-type coboundary obstruction for the winding class
\cite{Anzai1951,Furstenberg1961,GLL1991,ILR1993}---and is included only for
completeness.
\end{proposition}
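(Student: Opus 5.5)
The plan is to rerun the proof of Theorem~\ref{thm:A} with $m=0$, using the winding vector $k$ in place of the charge as the source of the nonconstant character. Suppose $\lambda\neq0$ and $F$ is carried by $\ell$ with $\mu\{F\neq0\}>0$; the case $\lambda=0$ again follows from invertibility. Lemma~\ref{lem:charge} gives a normalized unit section $e$ with $m=0$. Lemmas~\ref{lem:mult}, \ref{lem:scalar} and \ref{lem:phase} then apply verbatim, since (H2) is assumed and none of them uses $m\neq0$. They produce a measurable unimodular $\varphi'$ that is now $\Z^2$-periodic, because (C1) at $m=0$ is plain periodicity. It satisfies
\[
\varphi'(w+\tau)=c\,\eu^{2\pi\ii k\cdot w}\,\varphi'(w)\qquad\text{a.e.}
\]
Equivalently, $W\varphi'=\varphi'$ in $H_0=L^2(\T^2)$ with $W=\bar c\,E_{-k}M_\tau$, where $M_\tau$ is now ordinary translation.

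To reach a contradiction I would use Lemma~\ref{lem:mag} and the identity \eqref{eq:WM} of Lemma~\ref{lem:rigid}, both of which hold for every $m$, including $m=0$. At $m=0$ the character becomes $\chi(s)=\eu^{2\pi\ii k\cdot s}$. Since $k\neq0$, pick $j$ with $k_j\neq0$. Then $s_j\mapsto\chi(s_je_j)=\eu^{2\pi\ii k_js_j}$ is injective on $[0,\delta)$ for any $\delta<1/\abs{k_j}$. Each translate $M_{s_je_j}\varphi'$ is a unit eigenvector of the unitary $W$ with eigenvalue $\chi(s_je_j)$. Eigenvectors of a unitary for distinct eigenvalues are orthogonal, so these translates form an uncountable orthonormal family in the separable space $L^2(\T^2)$, which is impossible. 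This step is exactly the separability argument of Lemma~\ref{lem:rigid}; there the only place irrationality entered was to guarantee that the coefficient $k_1+m\beta$ is nonzero, and here $k_j\neq0$ guarantees the same thing.

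No step is a real obstacle. The one point to check is that Lemma~\ref{lem:rigid} was stated only for $m\neq0$, so its hypothesis cannot simply be cited. I would therefore restate its argument, noting that it only needs some coordinate of the frequency vector $(k_1+m\beta,\,k_2-m\alpha)$ to be nonzero, and that at $m=0$ this vector is $k$ itself.

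As a sanity check, there is an equivalent Fourier argument. Taking Fourier coefficients of the walk equation gives $\hat\varphi'_{\nu+k}\,\eu^{2\pi\ii(\nu+k)\cdot\tau}\cdot(\text{unimodular})=c\,\hat\varphi'_\nu$, so $\abs{\hat\varphi'_\nu}$ is constant along cosets $\nu+\Z k$. Because these cosets are infinite, square-summability forces $\hat\varphi'=0$, which contradicts $\norm{\varphi'}=1$.
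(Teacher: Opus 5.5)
Your proposal is correct and is essentially the paper's proof: rerun Theorem~\ref{thm:A} at $m=0$, where $\chi(s)=\eu^{2\pi\ii k\cdot s}$, and restrict to a coordinate axis with $k_j\neq0$. This makes the character injective on a short interval, which yields an uncountable orthonormal family in $L^2(\T^2)$. Your Fourier cross-check, that $\abs{\hat\varphi'_\nu}$ is constant along the infinite cosets $\nu+\Z k$, is also a valid and slightly more elementary way to finish.
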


\begin{proof}
Run the proof of Theorem~\ref{thm:A} verbatim; now $\chi(s)=\eu^{2\pi\ii k\cdot s}$.
If $k_1\neq0$, the family $s_1\mapsto\chi((s_1,0))$ is injective on
$[0,1/(2\abs{k_1}))$; if $k_1=0\neq k_2$ use the $s_2$-axis. The rigidity argument is
unchanged.
\end{proof}

\begin{remark}[mechanism lineage]\label{rem:lineage}
For $m\neq0$ the pair $M_{e_1},M_{e_2}$ generates a charge-$m$ magnetic translation
family in the sense of Zak \cite{Zak1964}, and $W$ lies in its translate by the
irrational vector $\tau$; Lemma~\ref{lem:rigid} is the classical
``covariance excludes eigenvectors'' argument in the Weyl-commutation lineage
\cite{vonNeumann1931,AvronHerbst1977}. All facts used are proved inline; the citations
record lineage, not logical dependence. Here
\[
\chi(s)=\eu^{2\pi\ii\left[(k_1+m\beta)s_1+(k_2-m\alpha)s_2\right]}.
\]
If either coefficient is nonzero, restriction to the corresponding real one-parameter
subgroup has image $S^1$. Thus the covariance argument fails only at the exact resonance
\[
k_1+m\beta=0,\qquad k_2-m\alpha=0.
\]
For $m\neq0$ and $k\in\Z^2$, this resonance forces $m\alpha$ and $m\beta$ to be
integers. At resonance the covariance argument gives no contradiction, but it does not
itself assert the existence of an eigensection.
\end{remark}

\subsection*{The abstract converse}

Theorem~\ref{thm:A} and Proposition~\ref{prop:winding} exhaust the mechanism: it has no
purchase when $m=0$ and $k=0$. In that case the equation can in fact always be solved,
under the exact mirror of \textup{(H2)}.

\begin{definition}[coboundary hypothesis for existence]\label{def:E2}
Assume $m=0$ and $k=0$, and let $h=\log\abs q+2\pi\ii g$ be the global branch of
Lemma~\ref{lem:log}. We say that $(A,\ell)$ satisfies \emph{hypothesis \textup{(E2)}} if
there is a \emph{measurable} $\Psi\colon\T^2\to\C$ with
\begin{equation}\label{eq:E2}
\Psi(Sw)-\Psi(w)=h(w)-\langle h\rangle\qquad\text{for a.e.\ }w .
\end{equation}
\end{definition}

\begin{theorem}[abstract existence]\label{thm:Aex}
Let $n\ge1$, let $(U_1,U_2)$ be a continuous unitary sewing system, $A$ a continuous
equivariant cocycle, and $\ell$ a continuous invariant line with $m=0$ and $k=0$ satisfying
\textup{(E2)}. Put $\lambda_0:=\eu^{\langle h\rangle}$. Then
\[
F_0:=\eu^{\Psi}\,e
\]
is a nonzero measurable eigensection of $T_A$ carried by $\ell$ with eigenvalue
$\lambda_0$. If $\Psi$ is bounded --- in particular if it is continuous --- then $F_0$ is
nowhere zero and $F_0\in\mathcal H_U$.
\end{theorem}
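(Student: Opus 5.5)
The plan is to reduce everything to the scalar level via descalarization (Lemma~\ref{lem:desc}): it suffices to produce a measurable $f$ satisfying \textup{(C1)} with $m=0$ and \textup{(C2)} with $\lambda=\lambda_0$, and to check that $f\neq0$ on a set of positive measure. With $m=0$, \textup{(C1)} just says that $f$ is $\Z^2$-periodic, i.e.\ a function on $\T^2$. So I take $f:=\eu^{\Psi}$, with $\Psi$ from \eqref{eq:E2} regarded as a $\Z^2$-periodic function on $\R^2$. Since $\Psi$ is measurable and $\C$-valued, $f$ is measurable and $\abs f=\eu^{\operatorname{Re}\Psi}>0$ everywhere $\Psi$ is defined, so $f\neq0$ a.e.

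Next I verify \textup{(C2)}. By Lemma~\ref{lem:log} (with $m=0$, $k=0$), $q=\eu^{h}$ on $\T^2$. Here I only need continuity of $q$, not analyticity: the existence of a continuous logarithm is the part of Lemma~\ref{lem:log} that does not use analyticity, and $h=\log\abs q+2\pi\ii g$ is such a logarithm. Exponentiating \eqref{eq:E2} gives $\eu^{\Psi(Sw)}=\eu^{h(w)-\langle h\rangle}\eu^{\Psi(w)}$ a.e., that is,
\[
q(w)f(w)=\eu^{h(w)}\eu^{\Psi(w)}=\eu^{\langle h\rangle}\eu^{\Psi(Sw)}=\lambda_0 f(Sw)\qquad\text{a.e.}
\]
I also need to check that $\lambda_0$ is well defined. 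Changing $g$ by an integer changes $\langle h\rangle$ by $2\pi\ii\Z$, and therefore does not change $\lambda_0$. Lemma~\ref{lem:desc} then gives that $F_0=fe=\eu^{\Psi}e$ is a section carried by $\ell$ with $A F_0=\lambda_0 F_0\circ S$ a.e. Since $\abs{F_0}=\abs f$ is a.e.\ nonzero, $F_0$ is a nonzero measurable eigensection.

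For the final clause, suppose $\abs\Psi\le C$. Then $\eu^{-C}\le\abs{F_0}\le\eu^{C}$. Hence $\norm{F_0}^2=\int_{[0,1)^2}\eu^{2\operatorname{Re}\Psi}\le\eu^{2C}<\infty$, so $F_0\in\mathcal H_U$, and $F_0$ vanishes nowhere wherever it is defined. If $\Psi$ is continuous, it is bounded on compact $\T^2$, and $F_0$ is then a continuous nowhere-zero section.

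The only step needing care is the bookkeeping: confirming that $\Psi$ lives on $\T^2$ (so that \textup{(C1)} with $m=0$ is automatic), and that the normalization $\langle h\rangle$ is consistent with the branch ambiguity. I expect no genuine obstacle beyond this.
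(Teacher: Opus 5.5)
Your proposal is correct and follows the paper's proof: take $f=\eu^{\Psi}$, observe that $\Z^2$-periodicity is (C1) when $m=0$, exponentiate (E2) to get (C2) with $\lambda_0=\eu^{\langle h\rangle}$, and apply Lemma~\ref{lem:desc}, with the bounded case handled by $\abs{F_0}=\eu^{\operatorname{Re}\Psi}$. You also correctly note that $q=\eu^{h}$ needs only continuity of $q$ together with $m=0$ and $k=0$, not the analyticity clause of Lemma~\ref{lem:log}, and that $\lambda_0$ does not depend on the integer ambiguity in $g$; the paper leaves both points implicit.
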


\begin{proof}
$f:=\eu^{\Psi}$ is measurable, zero free, and $\Z^2$-periodic because $\Psi$ is a function
on $\T^2$; since $m=0$, $\Z^2$-periodicity \emph{is} the automorphy (C1). By
\eqref{eq:E2},
\[
\frac{f(Sw)}{f(w)}=\eu^{\Psi(Sw)-\Psi(w)}=\eu^{h(w)-\langle h\rangle}
=\frac{q(w)}{\lambda_0}\qquad\text{a.e.},
\]
which is (C2). Lemma~\ref{lem:desc} turns $f$ into the eigensection $F_0=fe$. If $\Psi$ is
bounded then $\abs{F_0}=\eu^{\operatorname{Re}\Psi}$ is bounded above and below.
\end{proof}

\begin{remark}[what the two directions consume, and where $m=0$ enters]\label{rem:asym}
Splitting $h$ into real and imaginary parts, \textup{(E2)} is the conjunction of
\textup{(H2)} --- the manuscript's phase hypothesis --- \emph{and} the corresponding
statement for $\log\abs q$. This is the precise sense in which the modulus does not divide
out in the existence direction: Remark~\ref{rem:hyps} records that
Theorem~\ref{thm:A} never solves for $\log\abs q$, whereas Theorem~\ref{thm:Aex} must.
The abstract situation is therefore asymmetric --- exclusion needs \textup{(H2)} alone,
existence needs \textup{(H2)} and its modulus counterpart --- and it is only in the
analytic--Diophantine regime of Section~\ref{sec:analytic}, where
Lemma~\ref{lem:solve} supplies both at once, that $m$ and $k$ become a complete criterion.
Note also that Theorem~\ref{thm:Aex} uses ergodicity nowhere, whereas
Theorem~\ref{thm:A} consumes it once. Finally, $m=0$ is used in exactly one place, and
structurally: $\eu^{\Psi}$ is $\Z^2$-periodic, which is (C1) only when $m=0$; for
$m\neq0$, (C1) demands $f(w+e_2)=\eu^{-2\pi\ii m w_1}f(w)$, which no periodic $f$
satisfies. The construction fails exactly where Theorem~\ref{thm:A} bites.
\end{remark}

\section{Analytic cocycles: strip extension of dominated lines}\label{sec:strip}

We now prepare the discharge of (H2). For $\delta>0$ let
\[
\Omega_\delta:=\{w\in\C^2:\ \abs{\operatorname{Im}w_1}\le\delta,\
\abs{\operatorname{Im}w_2}\le\delta\},
\]
a closed tube around $\R^2$, with interior $\Omega_\delta^\circ$; $S$ acts on it by the
real translation $w\mapsto w+\tau$, and $\operatorname{Re}\colon\Omega_\delta\to\R^2$ is
the coordinatewise real part.

\begin{definition}[strip-admissible analytic data]\label{def:stripadm}
The triple $(U_1,U_2,A)$ is \emph{strip-admissible} if for some $\delta_0>0$ the maps
$U_1,U_2,A$ extend to matrix functions holomorphic on an open neighborhood of the
closed tube $\Omega_{\delta_0}$ and bounded on $\Omega_{\delta_0}$,
\[
\sup_{\Omega_{\delta_0}}\bigl(\norm{U_1}+\norm{U_2}+\norm{A}\bigr)<\infty,
\]
such that $U_j(w)\in U(n)$ and $\det A(w)\neq0$ for all real $w$, and \eqref{eq:K1},
\eqref{eq:K2} hold on $\R^2$. Holomorphy on a neighborhood of the \emph{closed} tube
is what the Cauchy estimates below consume: for $w\in\Omega_\delta$ with
$0<\delta<\delta_0$ the closed polydisc of polyradius $\delta_0-\delta$ centered at
$w$ lies in $\Omega_{\delta_0}$, and for real $w$ the closed polydisc of polyradius
$\delta_0$ does.
\end{definition}

The neighborhood formulation does not shrink the class of admissible data:
since $\delta_0$ is existentially quantified, a triple whose extensions are
holomorphic merely on the open tube $\Omega^\circ_{\delta_0}$ and bounded on
$\Omega_{\delta_0}$ satisfies Definition~\ref{def:stripadm} with $\delta_0$
replaced by $\delta_0/2$. Only the numerical values of constants tied to a
particular choice of $\delta_0$ (such as $L_A=S_0/\delta_0$ in
Lemma~\ref{lem:uc}) depend on that choice; no statement in this paper does.

\begin{lemma}[extension of the identities; uniform strip estimates]\label{lem:cauchy}
Let $(U_1,U_2,A)$ be strip-admissible. Then:
\begin{enumerate}
\item \eqref{eq:K1} and \eqref{eq:K2} hold on all of $\Omega_{\delta_0}$;
\item for every matrix function $G$ holomorphic on an open neighborhood of
$\Omega_{\delta_0}$ and bounded on $\Omega_{\delta_0}$, and every
$0<\delta<\delta_0$,
\begin{equation}\label{eq:cauchy}
\norm{G(w)-G(\operatorname{Re}w)}\ \le\ \frac{2\,\delta}{\delta_0-\delta}\,
\sup_{\Omega_{\delta_0}}\norm G\qquad(w\in\Omega_\delta),
\end{equation}
uniformly in $\operatorname{Re}w$; more generally, for the same $\delta$ and all
$w,w'\in\Omega_\delta$ with $\operatorname{Re}w=\operatorname{Re}w'$,
\begin{equation}\label{eq:cauchy2}
\norm{G(w)-G(w')}\ \le\
\frac{2\,\abs{\operatorname{Im}w-\operatorname{Im}w'}_{\infty}}{\delta_0-\delta}\,
\sup_{\Omega_{\delta_0}}\norm G ,
\end{equation}
in which the denominator retains the half-width $\delta$ of a strip containing the whole
segment $[w',w]$ and is \emph{not} replaced by the separation of the two points;
\item there are $\delta_1\in(0,\delta_0)$ and $C_U\ge1$ such that on
$\Omega_{\delta_1}$ each $U_j(w)$ is invertible with
$\norm{U_j(w)^{\pm1}}\le1+C_U\delta_1$, and
$\inf_{\Omega_{\delta_1}}\abs{\det A}>0$; moreover, for each $N$, the block product
$A^{(N)}(w):=A(S^{N-1}w)\cdots A(w)$, with the convention
$A^{(0)}(w):=I$, is holomorphic on an open neighborhood of
$\Omega_{\delta_0}$ and bounded on $\Omega_{\delta_0}$,
and satisfies the equivariance
\begin{equation}\label{eq:blockeq}
A^{(N)}(w+e_j)\,U_j(w)=U_j(S^Nw)\,A^{(N)}(w).
\end{equation}
\end{enumerate}
\end{lemma}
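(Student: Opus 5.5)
Item (1) will follow from the identity theorem for holomorphic functions of several variables. For each $j$, the map
\[
w\mapsto A(w+e_j)U_j(w)-U_j(w+\tau)A(w)
\]
is holomorphic on an open neighborhood of $\Omega_{\delta_0}$: translation by a real vector preserves the tube and its neighborhood, so we may shrink the neighborhood to a connected tube-shaped one that is invariant under real translations. This map vanishes on the totally real submanifold $\R^2$. A holomorphic function on a connected domain in $\C^2$ that vanishes on $\R^2$ vanishes identically, since all its complex derivatives at a real point are determined by real derivatives. The same argument applies to \eqref{eq:K1}, and it uses no bound.

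For item (2) I will use the Cauchy estimates. Fix $w\in\Omega_\delta$ and set $x=\operatorname{Re}w$, or more generally take $w'$ with the same real part. Consider the segment $t\mapsto w'+t(w-w')$, $t\in[0,1]$. It moves only in the imaginary directions and stays in $\Omega_\delta$, because both endpoints do and the tube is convex. At every point $z$ of this segment, the closed polydisc of polyradius $\delta_0-\delta$ about $z$ lies in $\Omega_{\delta_0}$. The one-variable Cauchy estimate in each coordinate then gives
\[
\norm{\partial_{z_i}G(z)}\le \sup_{\Omega_{\delta_0}}\norm G/(\delta_0-\delta).
\]
Integrating along the segment and summing the two coordinate contributions, each bounded by $\abs{\operatorname{Im}w-\operatorname{Im}w'}_\infty$ times that derivative bound, gives \eqref{eq:cauchy2} with the factor $2$. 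Specializing to $w'=\operatorname{Re}w$, where $\abs{\operatorname{Im}w}_\infty\le\delta$, gives \eqref{eq:cauchy}. The estimate is uniform in the real part, since the sup is global.

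For item (3), let $S_0$ bound $\norm{U_j}$ and $\norm A$ on $\Omega_{\delta_0}$. Real values of $U_j$ are unitary, so \eqref{eq:cauchy} yields
\[
\norm{U_j(w)-U_j(\operatorname{Re}w)}\le 2\delta S_0/(\delta_0-\delta)
\]
on $\Omega_\delta$. For $\delta\le\delta_0/2$ this is at most $C\delta$ with $C=4S_0/\delta_0$. A Neumann series around the unitary $U_j(\operatorname{Re}w)$ then gives invertibility and $\norm{U_j^{\pm1}}\le 1+2C\delta$ once $C\delta\le1/2$. I choose $\delta_1$ accordingly and set $C_U=2C$.

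The determinant bound will be the main point needing care. Real values of $\abs{\det A}$ descend to a continuous positive function on $\T^2$, because \eqref{eq:K2} conjugates $A$ by unitaries. Its infimum $d_0$ is therefore positive by compactness. The map $\det A$ is holomorphic and bounded by $2S_0^2$ on $\Omega_{\delta_0}$, so \eqref{eq:cauchy} applied to it gives $\abs{\det A(w)-\det A(\operatorname{Re}w)}\le d_0/2$ for $\delta_1$ small enough. This yields $\inf_{\Omega_{\delta_1}}\abs{\det A}\ge d_0/2$.

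Finally, $A^{(N)}$ is a finite product of the functions $A\circ S^i$. Each of these is holomorphic on the translation-invariant neighborhood and bounded by $S_0$ on $\Omega_{\delta_0}$, so $A^{(N)}$ is holomorphic there and bounded by $S_0^N$. The equivariance \eqref{eq:blockeq} follows by induction on $N$ from item (1): write
\[
A^{(N+1)}(w+e_j)U_j(w)=A(S^Nw+e_j)\,A^{(N)}(w+e_j)U_j(w),
\]
apply the inductive hypothesis, and then apply \eqref{eq:K2} at the point $S^Nw$. The case $N=0$ is trivial.
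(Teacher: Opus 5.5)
Your proposal follows the paper's proof step for step. For (1) it uses the identity theorem from the real slice. For (2) it uses Cauchy estimates on polydiscs of polyradius $\delta_0-\delta$ centered on the vertical segment $[w',w]\subset\Omega_\delta$. For (3) it uses a perturbation of the unitary real values of $U_j$, the $\Z^2$-periodicity of $\abs{\det A}$ on $\R^2$ plus compactness, and the same induction for \eqref{eq:blockeq}.

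\textbf{One assertion is false as stated,} although nothing depends on it. You cannot in general shrink the given open neighborhood $D$ of the closed tube to one invariant under real translations, because $D$ may thin out at infinity. For instance, $D=\{w\in\C^2:\abs{\operatorname{Im}w_i}<\delta_0+(1+\abs{\operatorname{Re}w}^2)^{-1},\ i=1,2\}$ contains no set $\R^2+\ii B$ with $B\supseteq[-\delta_0,\delta_0]^2$ open. This affects both (1) and the holomorphy of the block product in (3).

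\textbf{The fix} uses only the invariance of the closed tube $\Omega_{\delta_0}$ under real translations. The finite intersections $D\cap(D-e_j)\cap(D-\tau)$ for \eqref{eq:K2} and $\bigcap_{0\le i<N}S^{-i}D$ for $A^{(N)}$ are open neighborhoods of $\Omega_{\delta_0}$. All the functions involved are holomorphic on them. You then take the connected component containing the connected set $\Omega_{\delta_0}$. This is exactly what the paper does.

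\textbf{Two cosmetic points.} Set $C_U:=\max\{1,2C\}$ to meet the requirement $C_U\ge1$. Since the lemma is stated in rank $n$, bound $\sup_{\Omega_{\delta_0}}\abs{\det A}$ by $S_0^n$ rather than $2S_0^2$.
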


\begin{proof}
(1) Each identity is an equality of functions holomorphic on an open neighborhood of
$\Omega_{\delta_0}$, holding on $\R^2$. Since $\Omega_{\delta_0}$ is connected, it
lies in a single connected component $D$ of that neighborhood, a connected open set
containing $\R^2$. A holomorphic function on $D$ vanishing on the real slice vanishes
identically: at a real point all its derivatives in real
coordinate directions vanish, and by holomorphy these determine all complex partial
derivatives, so its Taylor series vanishes; connectedness of $D$ finishes.
(2) For $w\in\Omega_\delta$ the closed polydisc of polyradius $\delta_0-\delta$ around
$w$ lies in $\Omega_{\delta_0}$, so the Cauchy estimate gives
$\norm{\partial G/\partial w_i}\le\sup\norm G/(\delta_0-\delta)$ on \emph{all} of
$\Omega_\delta$. For $w,w'\in\Omega_\delta$ with $\operatorname{Re}w=\operatorname{Re}w'$
the segment $[w',w]$ lies in $\Omega_\delta$, so that bound holds along it; integrating,
\[
\norm{G(w)-G(w')}\le
\bigl(\abs{\operatorname{Im}w_1-\operatorname{Im}w_1'}
+\abs{\operatorname{Im}w_2-\operatorname{Im}w_2'}\bigr)
\frac{\sup\norm G}{\delta_0-\delta},
\]
and \eqref{eq:cauchy2} follows from $\abs\cdot_1\le2\abs\cdot_\infty$ in two variables.
Taking $w'=\operatorname{Re}w$, whose imaginary separation from $w$ is at most $\delta$,
gives \eqref{eq:cauchy}. Both estimates are uniform in $\operatorname{Re}w$.

The half-width $\delta$ in the denominator of \eqref{eq:cauchy2} cannot be replaced by
the separation $\abs{\operatorname{Im}w-\operatorname{Im}w'}_\infty$: the Cauchy bound
degrades towards the boundary of $\Omega_{\delta_0}$, and what controls it is the
distance from the connecting segment to that boundary, not the length of the segment.
With $\delta_0=1$ and $G(w)=\eu^{-\ii Tw_1}$ one has
$\sup_{\Omega_1}\abs G=\eu^{T}$, while the points $w=(\ii(1-\varepsilon),0)$ and
$w'=(\ii(1-2\varepsilon),0)$ of $\Omega_{1-\varepsilon}$ have equal real parts,
imaginary separation $\varepsilon$, and, for $T=(\log2)/\varepsilon$,
\[
\frac{\abs{G(w)-G(w')}}{\sup_{\Omega_1}\abs G}
=\eu^{-T\varepsilon}-\eu^{-2T\varepsilon}=\tfrac14 ,
\]
which exceeds $2\varepsilon/(1-\varepsilon)$ for all small $\varepsilon$.
(3) At real points $U_j$ is unitary and---as noted in
Definition~\ref{def:cocycle}---$\abs{\det A}$ is $\Z^2$-periodic and continuous, hence
bounded below by some $c_1>0$ on $\R^2$. Both statements transfer to a thin strip by
\eqref{eq:cauchy} applied to $U_j$ and to $\det A$ (a scalar holomorphic on the
neighborhood and bounded on the tube). For the block product, let $D$ be the
neighborhood of Definition~\ref{def:stripadm}: since the closed tube is
$S$-invariant, each factor $A(S^iw)$ is holomorphic on the open neighborhood
$S^{-i}D\supseteq\Omega_{\delta_0}$, so the finite product is holomorphic on
$\bigcap_{0\le i<N}S^{-i}D$, still an open neighborhood of $\Omega_{\delta_0}$, and
bounded on $\Omega_{\delta_0}$; \eqref{eq:blockeq} follows from \eqref{eq:K2} by
induction:
$A^{(N+1)}(w+e_j)U_j(w)=A(S^Nw+e_j)A^{(N)}(w+e_j)U_j(w)
=A(S^Nw+e_j)U_j(S^Nw)A^{(N)}(w)=U_j(S^{N+1}w)A^{(N+1)}(w)$, using
$S^N(w+e_j)=S^Nw+e_j$.
\end{proof}

\begin{definition}[dominated splitting]\label{def:dom}
Let $n=2$. A \emph{dominated splitting} for $(U,A)$ is a pair of invariant lines
$\ell_s\neq\ell_u$ (pointwise transversal, each as in Definition~\ref{def:invline};
write $E^s,E^u$ for the induced subbundles of $V$, so $V=E^s\oplus E^u$) such that for
some integer $N\ge1$ and some constant $0<\kappa_0<1$,
\begin{equation}\label{eq:dom}
D_N(z):=\frac{\ \norm{A^{(N)}(z)|_{E^s(z)}}\ }{\co\bigl(A^{(N)}(z)|_{E^u(z)}\bigr)}
\ \le\ \kappa_0^{2}\qquad\text{for all real }z .
\end{equation}
This is the standard domination condition for a splitting into two invariant lines;
compare the singular-value characterizations of dominated splittings in
\cite[Theorem~1.1]{BochiGourmelon2009}.
\end{definition}

\begin{lemma}[consequences of domination]\label{lem:domcons}
Assume \eqref{eq:dom}. Then:
\begin{enumerate}
\item \emph{(exact multiplicativity)} $D_{kN}(z)=\prod_{j=0}^{k-1}D_N(S^{jN}z)\le
\kappa_0^{2k}$ for all $k\ge1$ and real $z$;
\item \emph{(singular value ratio)} $\sigma_2\bigl(A^{(kN)}(z)\bigr)/
\sigma_1\bigl(A^{(kN)}(z)\bigr)\le D_{kN}(z)\le\kappa_0^{2k}$ for real $z$;
\item \emph{(Lyapunov gap)} the Haar averages
$\lambda_u:=\langle\log\abs{q_u}\rangle$ and $\lambda_s:=\langle\log\abs{q_s}\rangle$
of the one-step multipliers of $E^u,E^s$ satisfy
$\lambda_u-\lambda_s\ge\tfrac1N\log\kappa_0^{-2}>0$. Equivalently, when the base
translation is ergodic, these are the almost-everywhere Birkhoff limits.
\end{enumerate}
\end{lemma}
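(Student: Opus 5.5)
The plan is to treat the restrictions of $A$ to the two invariant lines as scalar data and to read all three items off the multipliers. For each line $\ell_s,\ell_u$ take the normalized unit sections $e_s,e_u$ of Lemma~\ref{lem:charge} and the multipliers $q_s,q_u$ of Lemma~\ref{lem:mult}. Invariance of the lines and $A e=q\,(e\circ S)$ iterate to
\[
A^{(N)}(z)e_\bullet(z)=q_\bullet^{(N)}(z)\,e_\bullet(S^Nz),\qquad q^{(N)}_\bullet(z):=\prod_{i=0}^{N-1}q_\bullet(S^iz).
\]
Since $e_\bullet$ has unit length, the restriction of $A^{(N)}(z)$ to a line is a map between one-dimensional spaces. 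Its norm and its co-norm are therefore both equal to $\abs{q^{(N)}_\bullet(z)}$, and
\[
D_N(z)=\prod_{i<N}\abs{q_s(S^iz)}\big/\abs{q_u(S^iz)}.
\]

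For item (1), the formula $D_{kN}(z)=\prod_{j<k}D_N(S^{jN}z)$ follows at once by grouping the product defining $D_{kN}$ into $k$ consecutive blocks of length $N$. Alternatively it follows from the cocycle identity $A^{(kN)}=A^{(N)}(S^{(k-1)N}\cdot)\cdots A^{(N)}$ and the convention that norms of compositions of line maps multiply. The bound by $\kappa_0^{2k}$ then comes from applying \eqref{eq:dom} at each real point $S^{jN}z$.

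For item (2), write $M=A^{(kN)}(z)$, with $a=\abs{q^{(kN)}_s(z)}$ and $b=\abs{q^{(kN)}_u(z)}$. The map $M$ sends the unit vector $e_s(z)$ to a vector of length $a$, so $\sigma_2(M)=\co(M)\le a$. It sends $e_u(z)$ to a vector of length $b$, so $\sigma_1(M)\ge b$. Hence $\sigma_2/\sigma_1\le a/b=D_{kN}(z)$. No angle estimates are needed for this direction, which is the only one stated.

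For item (3), take logarithms in (1):
\[
\sum_{i<kN}\bigl(\log\abs{q_u}-\log\abs{q_s}\bigr)(S^iz)\ge k\log\kappa_0^{-2}.
\]
The functions $\abs{q_\bullet}$ are $\Z^2$-periodic, continuous and zero-free, so their logarithms are continuous on $\T^2$. Integrating over $\T^2$ against Haar measure, which is $S$-invariant, each summand has the same integral. This gives $kN(\lambda_u-\lambda_s)\ge k\log\kappa_0^{-2}$, and dividing by $kN$ gives the claim. When $S$ is ergodic, Birkhoff's theorem (in fact uniform convergence, since $S$ is uniquely ergodic and the functions are continuous) identifies $\lambda_\bullet$ with the a.e.\ limits of $\tfrac1n\log\abs{q^{(n)}_\bullet}$. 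These limits equal the growth rates $\tfrac1n\log\norm{A^{(n)}|_{E^\bullet}}$, which is the stated equivalence.

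The main subtlety I expect is bookkeeping rather than analysis. I need to check that the line norms in \eqref{eq:dom} are computed in the Hermitian structure in which $e_\bullet$ is a unit vector. I also need to check that $\abs{q}$ is independent of the choice of gauge (Lemma~\ref{lem:gauge}). Both checks are immediate from unitarity of the sewing.
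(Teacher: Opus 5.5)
Your proposal is correct and follows the paper's proof essentially step for step. You use multiplicativity of norms and co-norms of line maps for (1), evaluate $\sigma_2$ and $\sigma_1$ at unit vectors of $E^s$ and $E^u$ for (2), and integrate the logarithm of the domination bound against the $S$-invariant Haar measure for (3); the only cosmetic difference is that you integrate the $kN$-block inequality and divide by $kN$, whereas the paper integrates the $N$-block inequality directly.
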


\begin{proof}
(1) Restrictions of the blocks to invariant lines compose as maps between
one-dimensional spaces, whose norms and co-norms multiply. (2) $\sigma_2(M)\le
\norm{M|_{E^s}}$ (evaluate the minimum over the unit sphere at a unit vector of $E^s$)
and $\sigma_1(M)\ge\co(M|_{E^u})$. (3) With $q_i(w)=\ip{e_i(Sw)}{A(w)e_i(w)}$ as in
Lemma~\ref{lem:mult} for unit sections $e_i$ of the two lines,
$\abs{q_i(w)}=\norm{A(w)|_{E^i(w)}}$; exact multiplicativity along lines gives
$N\langle\log\abs{q_s}\rangle=\langle\log\norm{A^{(N)}|_{E^s}}\rangle$ and
$N\langle\log\abs{q_u}\rangle=\langle\log\co(A^{(N)}|_{E^u})\rangle$, and
\eqref{eq:dom} bounds the difference of the integrands below by
$\log\kappa_0^{-2}$.
\end{proof}

\begin{lemma}[strip domination and singular directions]\label{lem:stripdom}
Assume strip-admissibility and \eqref{eq:dom}. Set
$\varepsilon_1:=\min_{z}\operatorname{dist}\bigl(E^s(z),E^u(z)\bigr)>0$ (spherical
metric \eqref{eq:metric}; the minimum over $\T^2$ is positive by continuity,
transversality and compactness). For every $k\in\N$ with $\kappa:=2\kappa_0^{2k}<1$
(all sufficiently large $k$) there is $\delta(k)>0$ such that,
with $\mathcal N:=kN$ and $M(w):=A^{(\mathcal N)}(w)$, the following hold on
$\Omega_{\delta(k)}$:
\begin{enumerate}
\item $0<s_-\le\sigma_2(M(w))\le\sigma_1(M(w))\le s_+<\infty$ and
$\eta(w):=\sigma_2(M(w))/\sigma_1(M(w))\le\kappa$;
\item the right singular line $v(w)$ (for $\sigma_2$) and the left singular line
$u(w)$ (for $\sigma_1$) are defined and continuous, with modulus of continuity in the
$\operatorname{Im}$-direction uniform in $\operatorname{Re}w$;
\item at real $z$:
$\operatorname{dist}\bigl(v(z),E^s(z)\bigr)\le\pi\kappa_0^{2k}$ and
$\operatorname{dist}\bigl(u(z),E^u(S^{\mathcal N}z)\bigr)\le\pi\kappa_0^{2k}$.
\end{enumerate}
\end{lemma}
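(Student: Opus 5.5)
The plan is to establish everything first at real points $z$ and then transfer it to a thin tube by the uniform Cauchy estimates of Lemma~\ref{lem:cauchy}. The transfer works because every real-point quantity is $\Z^2$-periodic, hence uniform in $\operatorname{Re}w$.

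Fix $k$ with $\kappa=2\kappa_0^{2k}<1$ and set $M=A^{(\mathcal N)}$.

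\textbf{Real points.} By Lemma~\ref{lem:domcons}(2), $\eta(z)\le\kappa_0^{2k}=\kappa/2$. Since $\norm{A^{(\mathcal N)}}$ and $\co(A^{(\mathcal N)})$ descend to $\T^2$ by \eqref{eq:blockeq} and unitarity of $U_j$ at real points, there are constants $0<s_-^0\le\sigma_2\le\sigma_1\le s_+^0$ on $\R^2$.

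For item (3), take a unit $x\in E^s(z)$ and expand it in the right singular basis: $x=a\,v_1+b\,v_2$, where $v_2$ spans $v(z)$ and $v_1$ is the top right singular vector. Then
\[
\abs{a}\,\sigma_1\le\abs{Mx}=\norm{M|_{E^s}}\le D_{\mathcal N}\,\co(M|_{E^u})\le\kappa_0^{2k}\sigma_1 ,
\]
where the last inequality uses $\co(M|_{E^u})\le\sigma_1$. Hence the sine of the Hermitian angle between $E^s(z)$ and $v(z)$ is at most $\kappa_0^{2k}$. With the metric convention \eqref{eq:metric}, distance equals twice the angle and $2\arcsin t\le\pi t$, which gives $\operatorname{dist}(v(z),E^s(z))\le\pi\kappa_0^{2k}$.

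The left singular statement is the same computation applied to the adjoint $M^*$. Its inverse-transpose dynamics swap the roles of the two lines. Alternatively, write a unit $y\in E^u(z)$ as $y=a\,v_1+b\,v_2$ and observe that $My\in E^u(S^{\mathcal N}z)$ while
\[
\abs{\ip{u_2}{My}}=\sigma_2\abs b\le\sigma_2\le\kappa_0^{2k}\sigma_1\le\kappa_0^{2k}\,\abs{My}\,\sigma_1/\co(M|_{E^u}).
\]
The factor $\sigma_1/\co(M|_{E^u})$ is the delicate part. To control it, use $\sigma_1\le\abs{Mv_1}$ and decompose $v_1$ along the transversal splitting $E^s\oplus E^u$; transversality $\varepsilon_1$ bounds the coefficients. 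I expect a constant depending on $\varepsilon_1$ to appear here. If the clean constant $\pi$ does not come out, I would first test the adjoint route, which needs domination of $(M^*)^{-1}$ with the lines $(E^s)^\perp,(E^u)^\perp$ exchanged. Failing that, I would enlarge $k$ to absorb the constant.

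\textbf{Transfer to the strip.} This is items (1) and (2). $M$ is holomorphic near the closed tube and bounded (Lemma~\ref{lem:cauchy}(3)), so \eqref{eq:cauchy} gives $\norm{M(w)-M(\operatorname{Re}w)}\le C\delta$ uniformly. By Weyl's perturbation inequality for singular values, $\abs{\sigma_i(M(w))-\sigma_i(M(\operatorname{Re}w))}\le C\delta$. Choosing $\delta(k)$ small yields $s_\pm$ and $\eta\le\kappa$, using the slack between $\kappa/2$ and $\kappa$.

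Since $\sigma_1-\sigma_2\ge(1-\kappa)s_+^{-1}\cdots>0$ is bounded below uniformly, the singular lines of the simple top/bottom singular values are continuous functions of the matrix. Concretely, they are eigenlines of $M^*M$ and $MM^*$, with spectral gap bounded below, so Davis--Kahan/Riesz projection estimates give a Lipschitz dependence on $M$. Combined with \eqref{eq:cauchy2}, this yields a modulus of continuity in the imaginary direction uniform in $\operatorname{Re}w$; continuity in the real directions is immediate.

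The main obstacle is the left-singular-line estimate in (3), specifically obtaining the constant $\pi$ without an $\varepsilon_1$-dependent factor. I would resolve it by the adjoint-duality argument, and otherwise by allowing $k$ to be larger.
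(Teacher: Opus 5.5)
Your treatment of items (1), (2) and the first half of (3) is correct and follows the paper's argument. You get real-point bounds from Lemma~\ref{lem:domcons} and periodicity. You transfer them to the tube via \eqref{eq:cauchy} and $1$-Lipschitz singular values, using the slack between $\kappa/2$ and $\kappa$. For (2) you use Riesz projections of $M^*M$, $MM^*$ with a uniform gap, combined with \eqref{eq:cauchy2}. (The garbled gap bound should read $\sigma_1-\sigma_2\ge(1-\kappa)\sigma_1\ge(1-\kappa)s_-$.)

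The genuine gap is the second estimate in (3), $\operatorname{dist}\bigl(u(z),E^u(S^{\mathcal N}z)\bigr)\le\pi\kappa_0^{2k}$, which you do not prove.

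\emph{Where information is lost.} Your direct computation starts correctly. For a unit $y=a\,v_1+b\,v_2\in E^u(z)$, the sine of the Hermitian angle between $E^u(S^{\mathcal N}z)=[My]$ and $u(z)=[u_1]$ is $\abs{\ip{u_2}{My}}/\abs{My}=\sigma_2\abs b/\co(M|_{E^u(z)})$. You then replace $\sigma_2$ by $\kappa_0^{2k}\sigma_1$. This creates the ratio $\sigma_1/\co(M|_{E^u})$, which is controlled only through the angle between $E^s$ and $E^u$. That is the source of the $\varepsilon_1$-dependence you anticipate.

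\emph{The missing observation.} Use the sharper inequality $\sigma_2=\co(M)\le\norm{M|_{E^s(z)}}$, already used in the proof of Lemma~\ref{lem:domcons}(2). It gives immediately $\sigma_2\abs b/\co(M|_{E^u(z)})\le\norm{M|_{E^s(z)}}/\co(M|_{E^u(z)})=D_{\mathcal N}(z)\le\kappa_0^{2k}$, and hence the constant $\pi$.

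\emph{The paper's version.} The paper expresses the same fact by running your first-half computation for $M^{-1}=V\Sigma^{-1}U^*$. Its least right singular line is $u(z)$, and its more strongly contracted invariant line is $E^u(S^{\mathcal N}z)$. Its domination ratio equals $D_{\mathcal N}(z)$ exactly, because norms of inverses of maps between lines are reciprocals.

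\emph{Your fallbacks do not close the gap.} Enlarging $k$ would give at best $C(\varepsilon_1)\kappa_0^{2k}$. That is not the inequality the lemma asserts for each admissible $k$, though it would still suffice for the later use in Theorem~\ref{thm:stripline}. The operator $(M^*)^{-1}=U\Sigma^{-1}V^*$ has the $v_i$ as right singular vectors, so it is the wrong operator. The adjoint $M^*$ itself would work, but only after two checks you did not carry out:
\begin{itemize}
\item its ratio on the annihilators $E^u(S^{\mathcal N}z)^\perp$, $E^s(S^{\mathcal N}z)^\perp$ is again exactly $D_{\mathcal N}(z)$, because the angle factors cancel;
\item the map $\ell\mapsto\ell^\perp$ is an isometry of $\CP$.
\end{itemize}
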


\begin{proof}
(1) At real $z$, parts (1)--(2) of Lemma~\ref{lem:domcons} give
$\eta(z)\le\kappa_0^{2k}$, while $\sigma_2\ge c_0^{\mathcal N}$ and
$\sigma_1\le C_0^{\mathcal N}$ by Definition~\ref{def:cocycle}. Singular values are
$1$-Lipschitz in the operator norm: $\sigma_1=\norm\cdot$ satisfies the triangle
inequality, and
$\abs{\sigma_2(X)-\sigma_2(Y)}\le\norm{X-Y}$ because
$\sigma_2(X)=\min_{\abs v=1}\abs{Xv}$. By Lemma~\ref{lem:cauchy}(2) applied to
$M$, choosing $\delta(k)$ small makes the perturbation smaller than
$\min\{s_-/4,\ \kappa_0^{2k}s_-/4\}$ with $s_-:=c_0^{\mathcal N}/2$, which gives (1)
with the factor $2$ absorbing the perturbation.
(2) $\sigma_1^2,\sigma_2^2$ are the eigenvalues of the Hermitian
$H(w):=M(w)^*M(w)$, separated by
$\sigma_1^2-\sigma_2^2\ge(1-\kappa^2)s_-^2=:\gamma>0$ on the strip. Let $P(w)$ be the
spectral projection of $H(w)$ for the eigenvalue $\sigma_2^2$, given by the Riesz
integral over the circle $\Gamma$ of radius $\gamma/2$ centered at
$\sigma_2(M(w))^2$; for Hermitian matrices
$\norm{(\zeta-H)^{-1}}=\operatorname{dist}(\zeta,\operatorname{spec}H)^{-1}\le2/\gamma$
on $\Gamma$. If $\norm{H(w)-H(w')}\le\gamma/4$, the spectrum of $H(w')$ moves by at
most $\gamma/4$, both resolvents are bounded by $4/\gamma$ on $\Gamma$, and the second
resolvent identity gives $\norm{P(w)-P(w')}\le(4/\gamma)\norm{H(w)-H(w')}$; pairs with
$\norm{\Delta H}\ge\gamma/4$ satisfy the same bound trivially since
$\norm{\Delta P}\le1$. The modulus is explicit. Put
$B:=\sup_{\Omega_{\delta_0}}\norm M$. For $w,w'\in\Omega_{\delta}$ with
$\operatorname{Re}w=\operatorname{Re}w'$ and
$\eta:=\abs{\operatorname{Im}w-\operatorname{Im}w'}_\infty$, the identity
$H(w)-H(w')=M(w)^*\bigl(M(w)-M(w')\bigr)+\bigl(M(w)-M(w')\bigr)^*M(w')$ and
\eqref{eq:cauchy2} give
\begin{equation}\label{eq:Hmod}
\norm{H(w)-H(w')}\ \le\ 2B\,\norm{M(w)-M(w')}\ \le\ \frac{4B^{2}\eta}{\delta_0-\delta},
\qquad
\norm{P(w)-P(w')}\ \le\ \frac{16B^{2}\eta}{\gamma\,(\delta_0-\delta)} .
\end{equation}
Note that the denominator is fixed by the strip $\Omega_\delta$ containing both points,
not by $\eta$; this is the form of Lemma~\ref{lem:cauchy}(2) that the sequel consumes,
and it is applied only to pairs $(w,\operatorname{Re}w)$, for which $\eta\le\delta$.
Hence $P$ and $v=\operatorname{range}P$ have a modulus of continuity in the
$\operatorname{Im}$-direction uniform in $\operatorname{Re}w$; note that for rank-one orthogonal projections
$\operatorname{dist}(\operatorname{range}P,\operatorname{range}P')\le
\pi\norm{P-P'}$ in the metric \eqref{eq:metric}. The line $u$ is treated the same way
via $MM^*$.
(3) Let $z$ be real, $e^s$ a unit vector of $E^s(z)$, and expand
$e^s=c_1v_1+c_2v_2$ in the right singular basis of $M=M(z)$. Then
$\norm{Me^s}^2=\abs{c_1}^2\sigma_1^2+\abs{c_2}^2\sigma_2^2\ge\abs{c_1}^2\sigma_1^2$,
while
$\norm{Me^s}=\norm{M|_{E^s}}\le D_{\mathcal N}(z)\,\co(M|_{E^u})\le
D_{\mathcal N}(z)\,\sigma_1$; hence $\abs{c_1}\le D_{\mathcal N}(z)\le\kappa_0^{2k}$.
The Hermitian angle $\theta$ between $E^s(z)$ and $[v_2]$ has $\sin\theta=\abs{c_1}$,
and $\operatorname{dist}=2\theta\le\pi\sin\theta$ on $[0,\pi/2]$. For the second
estimate apply the same computation to $M^{-1}=V\Sigma^{-1}U^*$ and the
$M^{-1}$-invariant line $E^u(S^{\mathcal N}z)$: the role of $[v_2]$ is played by the
top left singular line $u(z)$ of $M$, and the relevant ratio equals
$\norm{M^{-1}|_{E^u(S^{\mathcal N}z)}}\,/\,\co(M^{-1}|_{E^s(S^{\mathcal N}z)})
=\norm{M|_{E^s(z)}}/\co(M|_{E^u(z)})=D_{\mathcal N}(z)$, because norms of inverses of
maps between lines are reciprocals.
\end{proof}

\begin{lemma}[M\"obius contraction]\label{lem:mobius}
Let $M\in GL(2,\C)$ with singular values $\sigma_1\ge\sigma_2>0$,
$\eta=\sigma_2/\sigma_1$, and singular frames $Mv_i=\sigma_iu_i$. For
$0<\varepsilon\le1$ let
$K_\varepsilon:=\{\ell\in\CP:\operatorname{dist}(\ell,[u_1])\ge\varepsilon\}$. Then, in
the metric \eqref{eq:metric}:
\begin{enumerate}
\item $M^{-1}\bigl(K_\varepsilon\bigr)\subseteq
\{\ell:\operatorname{dist}(\ell,[v_2])\le4\eta/\varepsilon\}$;
\item if $K\subseteq K_\varepsilon$ is geodesically convex, then
$\operatorname{dist}\bigl(M^{-1}\ell,M^{-1}\ell'\bigr)\le
\eta\Bigl(1+\tfrac{4}{\varepsilon^2}\Bigr)\operatorname{dist}(\ell,\ell')$ for
$\ell,\ell'\in K$.
\end{enumerate}
Moreover every spherical disc of radius $<\pi/2$ is geodesically convex.
\end{lemma}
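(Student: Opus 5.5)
Work in singular coordinates: apply the unitaries $U^*$ on the target side and $V$ on the source side. Both are isometries of the metric \eqref{eq:metric}, so we may assume $M=\diag(\sigma_1,\sigma_2)$, $[u_1]=[1:0]$, $[v_2]=[0:1]$. In the affine chart $\ell=[1:r]$ centred at $[u_1]$, with $r=y/x$, the map $M^{-1}$ sends $[x:y]$ to $[x/\sigma_1:y/\sigma_2]$. Thus $r\mapsto r/\eta$, or in the chart $s=1/r$ centred at $[v_2]$, $M^{-1}$ acts as $s\mapsto\eta s$.

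\emph{Part (1).} A line $\ell\in K_\varepsilon$ has $2\arctan\abs r\ge\varepsilon$, so $\abs r\ge\tan(\varepsilon/2)\ge\varepsilon/2$ and $\abs s\le 2/\varepsilon$. Its image has $s$-coordinate $\eta s$, and its distance to $[v_2]$ is
\[
2\arctan(\eta\abs s)\le 2\eta\abs s\le 4\eta/\varepsilon .
\]

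\emph{Part (2).} The metric \eqref{eq:metric} in the chart $s$ has density $2/(1+\abs s^2)$, and $M^{-1}$ acts by $s\mapsto\eta s$. The pointwise expansion factor of $M^{-1}$ at $s$ is therefore
\[
\eta\,\frac{1+\abs s^2}{1+\eta^2\abs s^2}\le \eta\,(1+\abs s^2)\le \eta\Bigl(1+\tfrac{4}{\varepsilon^2}\Bigr)
\qquad\text{on }K_\varepsilon .
\]
Given $\ell,\ell'\in K$, join them by a minimizing geodesic. Geodesic convexity of $K$ means this geodesic lies in $K\subseteq K_\varepsilon$. Integrating the pointwise bound along it, the image curve joins $M^{-1}\ell$ to $M^{-1}\ell'$ and has length at most $\eta(1+4/\varepsilon^2)\operatorname{dist}(\ell,\ell')$. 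This length bounds the distance between the endpoints. The only care needed is that convexity is used to keep the geodesic inside $K_\varepsilon$; $M^{-1}\ell$ itself is not required to lie in $K_\varepsilon$.

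\emph{Convexity of discs.} On the unit round sphere, a disc of radius $\rho<\pi/2$ is a cap of angular radius less than a hemisphere. Any two points in such a cap are joined by a unique minimizing arc of length $<\pi$. That arc stays in the cap: the cap is the intersection of the sphere with an open half-space $\{x:\ip{x}{c}>\cos\rho\}$ with $\cos\rho>0$. Along a great-circle arc of length $<\pi$, the function $\ip{x}{c}$ is of the form $A\cos t+B\sin t$. Its positivity at both endpoints then forces it to be at least the minimum of the endpoint values along the arc, which keeps the arc in the cap.

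The main obstacle is this convexity claim. It is elementary but fiddly to state cleanly, and it is exactly where $\rho<\pi/2$ enters. I would verify it by the half-space argument above rather than by comparison geometry.
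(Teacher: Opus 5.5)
Your proof is correct. For parts (1) and (2) it is the paper's argument: your chart $s$ centred at $[v_2]$ is the paper's chart $r$, in which $M^{-1}$ acts by $s\mapsto\eta s$. The bounds $\abs s\le2/\varepsilon$ on $K_\varepsilon$ and $\eta(1+\abs s^2)/(1+\eta^2\abs s^2)\le\eta(1+4/\varepsilon^2)$, integrated along a geodesic that convexity keeps inside $K_\varepsilon$, are exactly the paper's estimates.

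The only deviation is the convexity of discs. The paper uses gnomonic projection, which sends great-circle arcs to segments and caps to Euclidean discs; you instead restrict the height function to the minimizing arc. Your route works equally well. The step you leave implicit is that $A\cos t+B\sin t$, positive at both ends of an interval of length $<\pi$, is positive throughout, because its nonpositivity set is a union of closed intervals of length $\pi$. It is then concave there, since it solves $f''=-f$, and so it is bounded below by its endpoint values.
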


\begin{proof}
Unitaries are isometries, so we may compute with $\Sigma^{-1}$ in the chart
$[r:1]:=[ru_1+u_2]$ on the input side and $[r':1]:=[r'v_1+v_2]$ on the output side:
$M^{-1}(ru_1+u_2)=\sigma_2^{-1}(\eta r\,v_1+v_2)$, i.e.\ the chart action is
$r\mapsto\eta r$. Distances in such a chart:
$\operatorname{dist}([r:1],[0:1])=2\arctan\abs r\le2\abs r$ and
$\operatorname{dist}([r:1],[\infty])=2\arctan(1/\abs r)\le2/\abs r$.
(1) $\ell\in K_\varepsilon$ gives $\varepsilon\le2/\abs r$, so $\abs r\le2/\varepsilon$;
the output coordinate is $\eta\abs r\le2\eta/\varepsilon$, so
$\operatorname{dist}(M^{-1}\ell,[v_2])\le4\eta/\varepsilon$.
(2) Along the geodesic joining $\ell,\ell'$, which stays in $K\subseteq K_\varepsilon$,
every point has chart coordinate $\abs r\le R:=2/\varepsilon$; the map $r\mapsto\eta r$
scales the metric density \eqref{eq:metric} by
$\eta(1+\abs r^2)/(1+\eta^2\abs r^2)\le\eta(1+R^2)$ there; integrating along the
geodesic bounds the length of the image path, hence the distance of the image points.
Convexity of discs of radius $r_0<\pi/2$: such a disc lies in the open hemisphere
around its center; gnomonic (central) projection from the center maps the hemisphere to
the plane, great-circle arcs to straight segments, and the disc to a Euclidean disc,
so the minimizing arc between two disc points is the preimage of a segment and stays in
the disc.
\end{proof}

The next lemma supplies the one uniformity that the holomorphy argument of
Theorem~\ref{thm:stripline} consumes. It concerns the \emph{lifted} stable line field
$\ell_s\colon\R^2\to\CP$: this satisfies the sewing automorphy
\eqref{eq:invline} but is \emph{not} $\Z^2$-periodic as a map into $\CP$, so
compactness of $\T^2$ does not by itself make it uniformly continuous
(Remark~\ref{rem:ucfails} below exhibits a continuous automorphic line field whose
lift is not uniformly continuous). Uniform continuity is instead a consequence of the
dominated dynamics. Throughout this section only the stable line $\ell_s$ is treated;
the unstable line $\ell_u$ is handled later, in Lemma~\ref{lem:inverse}, by applying
the stable-line results of this section and the next to the inverse cocycle.

\begin{lemma}[uniform continuity of the dominated line field]\label{lem:uc}
Assume strip-admissibility and a dominated splitting \eqref{eq:dom}. Then the lifted
stable line field $\ell_s\colon\R^2\to\CP$ is uniformly continuous with respect to
the Euclidean distance on $\R^2$ and the metric \eqref{eq:metric}. Quantitatively,
with $S_0:=\sup_{\Omega_{\delta_0}}\norm A$, $L_A:=S_0/\delta_0$,
$\bar C:=\max(C_0,1)$ and $\gamma_k:=(1-\kappa_0^4)\,c_0^{2kN}$ (the constants
$c_0,C_0$ of Definition~\ref{def:cocycle}),
\[
\operatorname{dist}\bigl(\ell_s(x),\ell_s(x')\bigr)\ \le\
\inf_{k\ge1}\Bigl[\,2\pi\kappa_0^{2k}+L_k\,\abs{x-x'}_1\Bigr],
\qquad
L_k:=\frac{8\pi\,kN\,\bar C^{\,2kN-1}L_A}{\gamma_k},
\]
for all $x,x'\in\R^2$ (with $\abs\cdot_1$ the $\ell^1$-distance); in particular
$\ell_s$ admits the modulus of continuity
$\omega(t):=\inf_{k\ge1}\bigl(2\pi\kappa_0^{2k}+L_kt\bigr)$, and
$\omega(t)\downarrow0$ as $t\downarrow0$.
\end{lemma}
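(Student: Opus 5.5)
The plan is to approximate the lifted stable line $\ell_s(x)$ by the most-contracted right singular line of a long block product, and then compare those singular lines at two points by perturbation theory for $M^*M$. The lifted block products are not $\Z^2$-periodic, but all their bounds are uniform on $\R^2$ by the equivariance \eqref{eq:K2}. Throughout, fix $k\ge1$ and put $\mathcal N:=kN$ and $M(z):=A^{(\mathcal N)}(z)$ for real $z\in\R^2$. Let $v_k(z)\in\CP$ be the right singular line of $M(z)$ for $\sigma_2$.

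\emph{Step 1 (approximation).} The argument of Lemma~\ref{lem:stripdom}(3) uses only the pointwise domination \eqref{eq:dom} and the multiplicativity of Lemma~\ref{lem:domcons}. Both hold at every lifted real point, because the norms and co-norms of the block restrictions descend to $\T^2$. That argument gives $\operatorname{dist}(v_k(z),\ell_s(z))\le\pi\kappa_0^{2k}$ for all $z\in\R^2$. The triangle inequality then yields
\[
\operatorname{dist}\bigl(\ell_s(x),\ell_s(x')\bigr)\le2\pi\kappa_0^{2k}+\operatorname{dist}\bigl(v_k(x),v_k(x')\bigr),
\]
so it remains to show that $v_k$ is Lipschitz on $\R^2$ with constant $L_k$ in the $\ell^1$-distance.

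\emph{Step 2 (Lipschitz bound for $M$).} At real points the closed polydisc of polyradius $\delta_0$ lies in the tube, so the Cauchy estimate gives $\norm{\partial_iA}\le S_0/\delta_0=L_A$ on $\R^2$. This bound is uniform on the lift because $A$ is bounded on the whole tube, not merely on a fundamental domain. Differentiate the product $M(z)=A(S^{\mathcal N-1}z)\cdots A(z)$ by the Leibniz rule. Each of the $\mathcal N$ terms has one differentiated factor, bounded by $L_A$, and $\mathcal N-1$ undifferentiated factors, each bounded by $C_0\le\bar C$ (Definition~\ref{def:cocycle}). Hence $\norm{M(x)-M(x')}\le\mathcal N\bar C^{\mathcal N-1}L_A\abs{x-x'}_1$. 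With $H:=M^*M$ and the identity used in \eqref{eq:Hmod}, this gives $\norm{H(x)-H(x')}\le2\mathcal N\bar C^{2\mathcal N-1}L_A\abs{x-x'}_1$.

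\emph{Step 3 (gap and projection perturbation).} At each real $z$, Lemma~\ref{lem:domcons}(2) gives $\sigma_2\le\kappa_0^{2k}\sigma_1$. Since $\sigma_1\ge\sigma_2\ge c_0^{\mathcal N}$, this yields $\sigma_1^2-\sigma_2^2\ge(1-\kappa_0^{4k})\sigma_1^2\ge(1-\kappa_0^4)c_0^{2\mathcal N}=\gamma_k$. Now rerun the Riesz-projection argument of Lemma~\ref{lem:stripdom}(2) between two real points rather than along the imaginary direction. If $\norm{\Delta H}\le\gamma_k/4$, the resolvent identity gives $\norm{P(x)-P(x')}\le(4/\gamma_k)\norm{\Delta H}$. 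If $\norm{\Delta H}>\gamma_k/4$, the same bound holds trivially because $\norm{\Delta P}\le1$. Combining with $\operatorname{dist}\le\pi\norm{\Delta P}$ for rank-one projections gives
\[
\operatorname{dist}\bigl(v_k(x),v_k(x')\bigr)\le\frac{8\pi\mathcal N\bar C^{2\mathcal N-1}L_A}{\gamma_k}\,\abs{x-x'}_1=L_k\abs{x-x'}_1.
\]
Taking the infimum over $k$ gives the stated estimate. Finally, $\omega(t)\downarrow0$: given $\epsilon>0$, first choose $k$ with $2\pi\kappa_0^{2k}<\epsilon/2$, then $t<\epsilon/(2L_k)$.

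The main obstacle is bookkeeping rather than substance. One must make sure every bound is uniform on the non-periodic lift, and that the gap $\gamma_k$ holds at both points simultaneously, including in the trivial large-$\Delta H$ case. A further check is that Step 1 really applies to lifted points: the singular line of $M(z)$ at $z+e_j$ is the $U_j$-image of the one at $z$, by \eqref{eq:blockeq}, so every estimate is invariant under the lattice action.
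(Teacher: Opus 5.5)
Your proposal is correct and follows essentially the same route as the paper's proof. You approximate $\ell_s$ uniformly on $\R^2$ by the least right-singular line $v_k$ of the block $A^{(kN)}$, to within $\pi\kappa_0^{2k}$ via the argument of Lemma~\ref{lem:stripdom}(3); you show that $v_k$ is $L_k$-Lipschitz using the real-slice Cauchy bound, the uniform gap $\gamma_k$ and the two-regime Riesz-projection estimate; and you conclude with the triangle inequality. The only cosmetic difference is that you bound $\norm{A^{(kN)}(x)-A^{(kN)}(x')}$ by differentiating the product with the Leibniz rule rather than by telescoping, which gives the same constant $kN\,\bar C^{\,kN-1}L_A$.
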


\begin{proof}
The proof takes place entirely on the real slice; no holomorphy of any
singular-vector field is claimed or used (finite-time singular directions depend on
$A^*A$ and are in general not holomorphic). Fix $k\ge1$, write $\mathcal N:=kN$ and
$M_k(x):=A^{(\mathcal N)}(x)$ for real $x$.

\emph{Step 1: $A$ is uniformly Lipschitz on $\R^2$.} For real $x$ the closed
polydisc of polyradius $\delta_0$ centered at $x$ lies in $\Omega_{\delta_0}$, so the
Cauchy estimates give $\norm{\partial A/\partial w_i(x)}\le S_0/\delta_0=L_A$
($i=1,2$), uniformly in $x$. Integrating along a coordinate path from $x$ to $x'$
inside $\R^2$,
\[
\norm{A(x)-A(x')}\ \le\ L_A\bigl(\abs{x_1-x_1'}+\abs{x_2-x_2'}\bigr)
\ =\ L_A\abs{x-x'}_1\qquad(x,x'\in\R^2).
\]

\emph{Step 2: global singular-value bounds from the unitary sewing.} $A$ itself is
not $\Z^2$-periodic and is nowhere treated as such. But by \eqref{eq:K2} and
unitarity of $U_j$ at real points, $\norm{A(\cdot)}$ and $\co(A(\cdot))$ are
$\Z^2$-periodic on $\R^2$ (Definition~\ref{def:cocycle}), whence
$0<c_0\le\co(A)\le\norm A\le C_0<\infty$ uniformly on $\R^2$. Consequently, for
every $r\in\N$ and real $x$,
\[
c_0^{\,r}\ \le\ \sigma_2\bigl(A^{(r)}(x)\bigr)\ \le\
\sigma_1\bigl(A^{(r)}(x)\bigr)\ \le\ C_0^{\,r},
\]
since the top (resp.\ bottom) singular value is sub- (resp.\ super-) multiplicative
along the product. (Equivalently: by the block equivariance \eqref{eq:blockeq}, all
singular values of $A^{(r)}(\cdot)$ are themselves $\Z^2$-periodic on $\R^2$.)

\emph{Step 3: each block is uniformly Lipschitz, with $r$-dependent constants.}
Telescoping $A^{(r)}(x)-A^{(r)}(x')$ factor by factor, bounding the untouched
factors by $\bar C^{\,r-1}$ (Step 2) and each difference by Step 1 (the translation
$S$ preserves $\abs\cdot_1$),
\[
\norm{A^{(r)}(x)-A^{(r)}(x')}\ \le\ r\,\bar C^{\,r-1}L_A\abs{x-x'}_1 .
\]

\emph{Step 4: the least right-singular line of a block is uniformly Lipschitz.} Let
$H_k:=M_k^*M_k$, a Hermitian matrix with eigenvalues
$\sigma_1(M_k)^2\ge\sigma_2(M_k)^2$. By Lemma~\ref{lem:domcons}(2),
$\sigma_2/\sigma_1\le\kappa_0^{2k}\le\kappa_0^{2}<1$ at every real point, and
$\sigma_2\ge c_0^{\mathcal N}$ by Step 2, so
\[
\sigma_1^2-\sigma_2^2\ =\ \sigma_1^2\bigl(1-(\sigma_2/\sigma_1)^2\bigr)
\ \ge\ c_0^{2\mathcal N}\bigl(1-\kappa_0^{4k}\bigr)\ \ge\ \gamma_k\ >\ 0
\qquad\text{on }\R^2 ,
\]
a spectral gap uniform on the real slice. Hence the $\sigma_2^2$-eigenprojection
$P_k(x)$ of $H_k(x)$ is well defined and of rank one, and its range
$v_k(x)$ is the least right-singular line of $M_k(x)$. The two-regime resolvent
estimate in the proof of Lemma~\ref{lem:stripdom}(2)---which used only the spectral
calculus of Hermitian matrices with a gap, not holomorphy---gives
$\norm{P_k(x)-P_k(x')}\le(4/\gamma_k)\norm{H_k(x)-H_k(x')}$ for all real $x,x'$.
Combining with
$\norm{H_k(x)-H_k(x')}\le2\bar C^{\,\mathcal N}\norm{M_k(x)-M_k(x')}$, Step 3, and
$\operatorname{dist}(v_k(x),v_k(x'))\le\pi\norm{P_k(x)-P_k(x')}$ (rank-one
orthogonal projections, metric \eqref{eq:metric}),
\[
\operatorname{dist}\bigl(v_k(x),v_k(x')\bigr)\ \le\ L_k\abs{x-x'}_1
\qquad(x,x'\in\R^2).
\]

\emph{Step 5: $v_k\to\ell_s$ uniformly on $\R^2$.} The proof of
Lemma~\ref{lem:stripdom}(3) consumes only the domination bound
$D_{\mathcal N}(z)\le\kappa_0^{2k}$, valid at \emph{every} real $z$ by
Lemma~\ref{lem:domcons}(1); it yields
\[
\operatorname{dist}\bigl(v_k(z),\ell_s(z)\bigr)\ \le\ \pi\kappa_0^{2k}
\qquad(z\in\R^2),
\]
uniformly in $z$. The sewing does not perturb this uniformity: the bound is
pointwise and its constant is the global domination constant.

\emph{Step 6: conclusion.} For every $k\ge1$ and all real $x,x'$, the triangle
inequality through $v_k$ gives
\[
\operatorname{dist}\bigl(\ell_s(x),\ell_s(x')\bigr)\ \le\
2\pi\kappa_0^{2k}+L_k\abs{x-x'}_1 ,
\]
and taking the infimum over $k$ gives the display. Given $\varepsilon>0$, choose
first $k$ with $2\pi\kappa_0^{2k}<\varepsilon/2$ and then
$\abs{x-x'}_1<\varepsilon/(2L_k)$: this is uniform continuity---$\ell_s$ is a
uniform limit of the uniformly continuous fields $v_k$, and a uniform limit of
uniformly continuous maps is uniformly continuous.
\end{proof}

\begin{remark}[uniform continuity of the lift is not automatic]\label{rem:ucfails}
Compactness of the quotient does not make the lift of a continuous section of
$\Proj V$ uniformly continuous: the sewing is a twist, not a periodicity. Example:
$n=2$, $U_1\equiv I$, $U_2(w)=\diag\bigl(1,\eu^{2\pi\ii w_1}\bigr)$ (a valid sewing
system, \eqref{eq:K1} being immediate). In the affine coordinate $\zeta=v_2/v_1$ on
$\CP$ the automorphy of a line field reads $\zeta(w+e_1)=\zeta(w)$,
$\zeta(w+e_2)=\eu^{2\pi\ii w_1}\zeta(w)$. Choose a continuous (even smooth)
$\rho\colon[0,1]\to[0,\infty)$ with $\rho(0)=\rho(1)=0$ and $\rho(1/2)=1$, set
$\zeta(w):=\rho(w_2)$ for $0\le w_2\le1$, and extend by the automorphy:
$\zeta(w_1,w_2+n)=\eu^{2\pi\ii nw_1}\rho(w_2)$ for $w_2\in[0,1]$, $n\in\Z$. Since
the twist fixes the pole $\zeta=0$ and $\rho$ vanishes at the seams, this is a
globally continuous automorphic line field. But at height $w_2+n$ with
$\rho(w_2)=1$ the lift traverses the equator $\abs\zeta=1$ with winding rate $n$:
the points $(w_1,w_2+n)$ and $(w_1+\tfrac1{2n},w_2+n)$, at Euclidean distance
$\tfrac1{2n}\to0$, are mapped to \emph{orthogonal} lines (distance $\pi$, the
diameter). So the lift is continuous but not uniformly continuous.
Lemma~\ref{lem:uc} therefore genuinely consumes the dominated dynamics---through
the uniform convergence of the singular-direction fields---and not merely
continuity of the sewn line field plus compactness.
\end{remark}

\begin{theorem}[holomorphic extension of the invariant line]\label{thm:stripline}
Assume strip-admissibility and a dominated splitting \eqref{eq:dom}. Then there is
$\delta>0$ such that the line field $\ell_s$ of $E^s$ extends to a holomorphic map
$\tilde\ell\colon\Omega^\circ_\delta\to\CP$ with
\[
\tilde\ell(w+e_j)=U_j(w)\tilde\ell(w)\quad(j=1,2),\qquad
A^{(\mathcal N)}(w)^{-1}\tilde\ell(S^{\mathcal N}w)=\tilde\ell(w),\qquad
\tilde\ell|_{\R^2}=\ell_s ,
\]
for a suitable block length $\mathcal N$.
\end{theorem}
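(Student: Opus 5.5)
Use a graph-transform fixed-point construction on the tube: the stable line is the unique invariant section of a uniformly contracting holomorphic backward graph transform, so it is a locally uniform limit of holomorphic line fields, and holomorphy, sewing and the block-invariance pass to the limit.

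\emph{Setup.} Fix $k$ with $\kappa=2\kappa_0^{2k}$ small, set $\mathcal N=kN$ and $M=A^{(\mathcal N)}$, and let $\delta\le\delta(k)$ be as in Lemma~\ref{lem:stripdom}. Consider the space $\mathcal X$ of continuous maps $\ell\colon\Omega^\circ_\delta\to\CP$ such that:
\begin{itemize}
\item $\ell$ satisfies the sewing automorphy $\ell(w+e_j)=U_j(w)\ell(w)$;
\item $\ell$ stays within a fixed small spherical distance $r_0$ of $\ell_s(\operatorname{Re}w)$.
\end{itemize}
Both conditions are meaningful because $U_j$ extends holomorphically and invertibly to the tube by Lemma~\ref{lem:cauchy}(1),(3). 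Equip $\mathcal X$ with the sup metric. Define the backward graph transform
\[
(\mathcal T\ell)(w):=M(w)^{-1}\ell(S^{\mathcal N}w).
\]
The sewing automorphy is preserved by the block equivariance \eqref{eq:blockeq}: $M(w+e_j)^{-1}U_j(S^{\mathcal N}w)=U_j(w)M(w)^{-1}$.

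\emph{Key steps.}
\begin{enumerate}
\item $\mathcal T$ maps $\mathcal X$ into itself and contracts. At $w$ in the tube, $\ell(S^{\mathcal N}w)$ is within $r_0$ of $\ell_s(S^{\mathcal N}\operatorname{Re}w)=\ell_u$-transversal data. By Lemma~\ref{lem:stripdom}(3) together with the uniform Im-continuity of Lemma~\ref{lem:stripdom}(2), the line $\ell_s(S^{\mathcal N}\operatorname{Re}w)$ is at distance $\ge\varepsilon_1-\pi\kappa_0^{2k}-o(1)$ from $u(w)$. So $\ell(S^{\mathcal N}w)$ lies in $K_{\varepsilon}$ with $\varepsilon\approx\varepsilon_1/2$, inside a disc of radius $<\pi/2$, hence in a geodesically convex set. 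Lemma~\ref{lem:mobius}(1) then puts $\mathcal T\ell(w)$ within $4\kappa/\varepsilon$ of $v(w)$. The point $v(w)$ is close to $v(\operatorname{Re}w)$, which by Lemma~\ref{lem:stripdom}(2) is close to $\ell_s(\operatorname{Re}w)$ by Lemma~\ref{lem:stripdom}(3); this gives self-mapping for $k$ large and $\delta$ small. Lemma~\ref{lem:mobius}(2) gives the contraction constant $\kappa(1+4/\varepsilon^2)<1$.
\item Banach fixed point. The space $\mathcal X$ is complete, so $\mathcal T$ has a unique fixed point $\tilde\ell$.
\item The real restriction is $\ell_s$. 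On the real slice, $\ell_s$ is itself a fixed point of the real version of $\mathcal T$, and the real version is also a contraction. The restriction of $\tilde\ell$ to $\R^2$ is a fixed point of the real version, so uniqueness gives $\tilde\ell|_{\R^2}=\ell_s$.
\item $\tilde\ell$ is holomorphic. Start from a holomorphic seed $\ell^{(0)}\in\mathcal X$. The iterates $\mathcal T^n\ell^{(0)}$ are holomorphic, since a holomorphic matrix applied to a holomorphic line field is holomorphic, and they converge uniformly to $\tilde\ell$. The uniform limit is therefore holomorphic: work in local affine charts, which is possible because all iterates are uniformly close to the limit, and apply Weierstrass.
\end{enumerate}

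\emph{Main obstacle: the seed.} The difficulty is producing a holomorphic $\ell^{(0)}\in\mathcal X$ that satisfies the twisted sewing automorphy exactly and is uniformly close to $\ell_s(\operatorname{Re}w)$. My first attempt is to bypass the seed. Define
\[
\ell^{(0)}(w):=M(S^{-\mathcal N}w)\,c
\]
for a fixed constant line $c$; more precisely, take forward push-forward iterates $A^{(n\mathcal N)}(S^{-n\mathcal N}w)\,c$, which converge to the unstable line, and treat $\ell_u$ symmetrically via the inverse. Such iterates, however, lack exact sewing.

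The cleaner route drops the automorphy from $\mathcal X$ and proves it a posteriori from uniqueness: both $w\mapsto\tilde\ell(w+e_j)$ and $w\mapsto U_j(w)\tilde\ell(w)$ are fixed points of $\mathcal T$ near $\ell_s$. Without automorphy, though, the closeness condition to $\ell_s(\operatorname{Re}w)$ needs the uniform continuity of Lemma~\ref{lem:uc}. With that lemma, one can take as seed the pointwise holomorphic line field
\[
\ell^{(0)}(w):=[\text{constant-in-}\operatorname{Im}\text{ extension}],
\]
namely a locally constant choice $\ell_s(x_0)$ on small real patches. This seed is not globally holomorphic, so it should instead be used patchwise: on each polydisc of fixed radius around a real point $x_0$, run the contraction with the constant seed $\ell_s(x_0)$. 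Lemma~\ref{lem:uc} keeps the constant seed uniformly close to $\ell_s$ across the polydisc, which gives a holomorphic local limit there. Uniqueness glues the local limits into a global $\tilde\ell$ and also yields both the sewing relation and the block-invariance.
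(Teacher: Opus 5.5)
Your final route is the paper's architecture: drop the automorphy from the function space, apply Banach to continuous line fields near $\ell_s(\operatorname{Re}w)$, recover the restriction and the sewing from uniqueness, and get holomorphy from Lemma~\ref{lem:uc} plus Weierstrass. However, the holomorphy step is missing its key idea, and the sewing step is misstated.

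\emph{Holomorphy.} The graph transform is not local: $(\mathcal T^n\ell^{(0)})(w)=A^{(n\mathcal N)}(w)^{-1}\ell^{(0)}(S^{n\mathcal N}w)$. So on a polydisc $P$ about $x_0$ the iterates see the seed only on the translates $S^{n\mathcal N}P$, which move off along the orbit, and you cannot ``run the contraction on $P$''. A seed equal to the constant $\ell_s(x_0)$ is useless there. For large $n$ the line $\ell_s(x_0)$ bears no relation to $\ell_s(\operatorname{Re}S^{n\mathcal N}w)$ (the lifted field is not even periodic), so it need not lie in the region where the block maps contract. Closeness of $\ell_s(x_0)$ to $\ell_s(\operatorname{Re}w)$ on $P$, which needs only pointwise continuity, is beside the point.

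The missing idea is an $n$-dependent constant placed at the far end of the orbit segment. For fixed $w_0$, set $c_n:=\ell_s(\operatorname{Re}S^{n\mathcal N}w_0)$ and $\psi_n(w):=A^{(n\mathcal N)}(w)^{-1}c_n$; this is holomorphic, being a holomorphic invertible matrix family applied to a constant line. Control it on a ball $\mathcal B$ about $w_0$ whose radius does not depend on $n$. Since $\tau$ is real, $\operatorname{Re}S^{n\mathcal N}w-\operatorname{Re}S^{n\mathcal N}w_0=\operatorname{Re}w-\operatorname{Re}w_0$. The modulus $\omega$ of Lemma~\ref{lem:uc} therefore puts $c_n$ within $\omega(\abs{\operatorname{Re}w-\operatorname{Re}w_0}_1)$ of $\ell_s(\operatorname{Re}S^{n\mathcal N}w)$ \emph{uniformly in $n$}. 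Iterating the one-block contraction pointwise along the orbit then gives $\operatorname{dist}(\psi_n(w),\tilde\ell(w))\le2^{-n}\pi$ on $\mathcal B$, and Weierstrass in one affine chart finishes. This uniformity in $n$ is the only place Lemma~\ref{lem:uc} is needed; in particular, the Banach step does not use it, contrary to your remark.

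\emph{Sewing.} Neither $w\mapsto\tilde\ell(w+e_j)$ nor $w\mapsto U_j(w)\tilde\ell(w)$ is a fixed point of $\mathcal T$. Both are fixed by the shifted transform $\ell\mapsto M(\cdot+e_j)^{-1}\ell(S^{\mathcal N}\cdot)$. The $\mathcal T$-fixed object is $\ell'(w):=U_j(w)^{-1}\tilde\ell(w+e_j)$, because \eqref{eq:blockeq} gives $M(w)^{-1}U_j(S^{\mathcal N}w)^{-1}=U_j(w)^{-1}M(w+e_j)^{-1}$.

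Uniqueness still requires $\ell'$ to lie in the uniqueness class, and since $U_j(w)$ is not unitary off $\R^2$ this is not automatic. You need two things:
\begin{itemize}
\item a projective perturbation bound $\operatorname{dist}\bigl(U_j(w)^{-1}\ell,\,U_j(\operatorname{Re}w)^{-1}\ell\bigr)=O(\delta)$;
\item slack between the radius in which the fixed point lands and the radius of the uniqueness class.
\end{itemize}
The paper lands the fixed point in discs of radius $\varepsilon_1/4$ about $\ell_s(\operatorname{Re}w)$ and proves uniqueness among sections of discs of radius $3\varepsilon_1/8$. It then shrinks $\delta$ so that the perturbation is at most $\varepsilon_1/8$.
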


\begin{proof}
Fix $\varepsilon:=\varepsilon_1/4$ with $\varepsilon_1$ from
Lemma~\ref{lem:stripdom}. Choose $k$, then $\delta\le\min(\delta_1,\delta(k))$, so
that with the objects of Lemma~\ref{lem:stripdom} at block length
$\mathcal N=kN$:
(i) $\kappa\bigl(1+4/\varepsilon^2\bigr)\le\tfrac12$;
(ii) $\operatorname{dist}\bigl(v(w),E^s(\operatorname{Re}w)\bigr)\le\varepsilon_1/8$
and
$\operatorname{dist}\bigl(u(w),E^u(\operatorname{Re}(S^{\mathcal N}w))\bigr)\le
\varepsilon_1/8$ for all $w\in\Omega_\delta$ (available from
Lemma~\ref{lem:stripdom}(2)--(3): at the real point $\operatorname{Re}w$ the bounds
are $\le\pi\kappa_0^{2k}\le\varepsilon_1/16$ for large $k$, and the modulus of
continuity of $u,v$ in the $\operatorname{Im}$-direction, \emph{uniform in}
$\operatorname{Re}w$, bounds $\operatorname{dist}(v(w),v(\operatorname{Re}w))$ and
$\operatorname{dist}(u(w),u(\operatorname{Re}w))$ by $\varepsilon_1/16$ for
$\delta$ small; note $\operatorname{Re}(S^{\mathcal N}w)=
S^{\mathcal N}\operatorname{Re}w$ since $\tau$ is real---no uniform continuity of
$E^s$ or $E^u$ in the $\operatorname{Re}$-direction is needed here);
(iii) $4\kappa/\varepsilon\le\varepsilon_1/8$.
Finally put
\[
B:=\max\left\{1,\ \sup_{\Omega_{\delta_0}}
\bigl(\norm{U_1}+\norm{U_2}+\norm A\bigr)\right\}
\]
and shrink $\delta$ once more so that $C_U\delta\le1$ and
$\delta\le\varepsilon_1\delta_0/(512\pi B)$, retaining (i)--(iii).
For $r\in[\varepsilon_1/4,\,3\varepsilon_1/8]$ define the disc field
$\mathcal D_r(w):=\{\ell\in\CP:\operatorname{dist}(\ell,E^s(\operatorname{Re}w))\le r\}$,
a geodesically convex spherical disc (radius $\le3\varepsilon_1/8<\pi/2$;
$\varepsilon_1\le\pi$). Continuous sections of $\mathcal D_r$ over $\Omega_\delta$ with
the sup-distance form a complete metric space. Define the graph transform
\[
(\Phi\tilde\ell)(w):=A^{(\mathcal N)}(w)^{-1}\cdot\tilde\ell(S^{\mathcal N}w)
\qquad(\text{projective action}).
\]

\emph{Separation.} For $\ell\in\mathcal D_r(S^{\mathcal N}w)$ (any
$r\le3\varepsilon_1/8$),
\[
\operatorname{dist}(\ell,u(w))\ \ge\
\varepsilon_1-\tfrac{3\varepsilon_1}{8}-\tfrac{\varepsilon_1}{8}
=\tfrac{\varepsilon_1}{2}\ \ge\ \varepsilon
\]
by the triangle inequality, (ii), and the definition of $\varepsilon_1$; the same holds
along geodesics inside the convex disc.

\emph{Invariance and contraction.} By Lemma~\ref{lem:mobius}(1) and (iii),
$(\Phi\tilde\ell)(w)$ is within $4\kappa/\varepsilon\le\varepsilon_1/8$ of $[v(w)]$,
hence within $\varepsilon_1/4\le r$ of $E^s(\operatorname{Re}w)$ by (ii): $\Phi$ maps
sections of $\mathcal D_r$ to sections of $\mathcal D_{\varepsilon_1/4}\subseteq
\mathcal D_r$. By Lemma~\ref{lem:mobius}(2) and (i), $\Phi$ is a
$\tfrac12$-contraction in the sup-distance. Banach's fixed point theorem yields a unique
fixed section $\ell_\infty$ of $\mathcal D_{3\varepsilon_1/8}$, lying in
$\mathcal D_{\varepsilon_1/4}$.

\emph{Holomorphy.} Fix $w_0\in\Omega^\circ_\delta$. By Lemma~\ref{lem:uc} the lifted
line field $E^s=\ell_s\colon\R^2\to\CP$ is uniformly continuous; let $\omega$ be a
modulus of continuity for it and choose $\rho>0$ so small that
$\omega(2\rho)\le\min\{3\varepsilon_1/8,\ \pi/16\}$, and let
$\mathcal B\subseteq\Omega^\circ_\delta$ be the open ball of radius $\rho$ around $w_0$. Set
$c_k:=E^s\bigl(\operatorname{Re}(S^{k\mathcal N}w_0)\bigr)$ for $k\ge0$, a
\emph{constant} line for each $k$. Since $\tau$ is real,
$\operatorname{Re}(S^{k\mathcal N}w)-\operatorname{Re}(S^{k\mathcal N}w_0)
=\operatorname{Re}w-\operatorname{Re}w_0$ for every $w$ and $k$, so for all
$w\in\mathcal B$ and all $k\ge0$
\[
\operatorname{dist}\Bigl(c_k,\
E^s\bigl(\operatorname{Re}(S^{k\mathcal N}w)\bigr)\Bigr)
\ \le\ \omega\bigl(\abs{\operatorname{Re}w-\operatorname{Re}w_0}_1\bigr)
\ \le\ \omega(2\rho)\ \le\ \tfrac{3\varepsilon_1}{8},
\]
i.e.\ $c_k\in\mathcal D_{3\varepsilon_1/8}(S^{k\mathcal N}w)$ \emph{uniformly in
$k$}. This uniformity is exactly what Lemma~\ref{lem:uc} provides; it is the single
point of the proof where that lemma is consumed, and it cannot be obtained from
continuity of the sewn line field alone (Remark~\ref{rem:ucfails}). Define
$\psi_k(w):=A^{(k\mathcal N)}(w)^{-1}\cdot c_k$ on $\mathcal B$: a holomorphic
$\CP$-valued map (a fixed holomorphic invertible matrix family applied to a constant
line; $\det A^{(k\mathcal N)}\neq0$ on the $S$-invariant strip $\Omega_\delta$ by
Lemma~\ref{lem:cauchy}(3), applied to the block products). We emphasize that the
finite-time singular-direction fields $v_k$ of Lemma~\ref{lem:uc} enter only on the real slice, to produce the
modulus $\omega$; they are in general not holomorphic and are \emph{not} used as
approximants---the approximants are the maps $\psi_k$, holomorphic because the
$c_k$ are constants. Iterating the invariance and contraction pointwise along
the orbit segment---the input $c_k$ and the fixed-point value
$\ell_\infty(S^{k\mathcal N}w)$ both lie in the geodesically convex disc
$\mathcal D_{3\varepsilon_1/8}(S^{k\mathcal N}w)$, each single-block map is
$\tfrac12$-Lipschitz there (Separation and Lemma~\ref{lem:mobius}(2)), and each
single-block image lies in $\mathcal D_{\varepsilon_1/4}$ (Invariance)---gives
\[
\operatorname{dist}\bigl(\psi_k(w),\ell_\infty(w)\bigr)\le2^{-k}\pi
\qquad(w\in\mathcal B).
\]
All values of all $\psi_k$ and of $\ell_\infty$ on $\mathcal B$ lie in
$\mathcal D_{3\varepsilon_1/8}(w)$, whose center $E^s(\operatorname{Re}w)$ is within
$\omega(2\rho)\le\pi/16$ of the fixed line $E^s(\operatorname{Re}w_0)$; since
$3\varepsilon_1/8+\pi/16\le\tfrac{3\pi}8+\tfrac\pi{16}<\tfrac\pi2$ (recall
$\varepsilon_1\le\pi$), all these lines lie in a fixed spherical disc of radius
$<\pi/2$ around $E^s(\operatorname{Re}w_0)$, i.e.\ in one affine chart, with chart
coordinate and metric \eqref{eq:metric} comparable there. On that chart
$\psi_k\to\ell_\infty$ uniformly, and a uniform limit of holomorphic chart-valued
maps is holomorphic (Weierstrass). Hence $\ell_\infty$ is holomorphic on $\mathcal B$; as
$w_0$ was arbitrary, on all of $\Omega^\circ_\delta$.

\emph{Equivariance.} By Lemma~\ref{lem:cauchy}(3) the $U_j$ are invertible on the
strip; set $\ell'(w):=U_j(w)^{-1}\ell_\infty(w+e_j)$. Using \eqref{eq:blockeq},
\[
\begin{aligned}
(\Phi\ell')(w)&=A^{(\mathcal N)}(w)^{-1}U_j(S^{\mathcal N}w)^{-1}
\ell_\infty(S^{\mathcal N}w+e_j)\\
&=U_j(w)^{-1}A^{(\mathcal N)}(w+e_j)^{-1}\ell_\infty\bigl(S^{\mathcal N}(w+e_j)\bigr)
=\ell'(w),
\end{aligned}
\]
so $\ell'$ is $\Phi$-fixed. At real points $U_j$ is a spherical isometry and $E^s$ is
sewing-equivariant, so $\ell'$ is a section of $\mathcal D_{\varepsilon_1/4}$ there;
on the strip, the projective perturbation estimate and Lemma~\ref{lem:cauchy}(2)--(3)
give, uniformly for $\ell\in\CP$,
\[
\operatorname{dist}\bigl(U_j(w)^{-1}\ell,
U_j(\operatorname{Re}w)^{-1}\ell\bigr)
\le \frac{64\pi B\delta}{\delta_0}\le\frac{\varepsilon_1}{8}.
\]
Here the three operator-norm factors in the perturbation estimate are at most $2$ by
$C_U\delta\le1$ and the proof of Lemma~\ref{lem:cauchy}(3). Taking
$\ell=\ell_\infty(w+e_j)$, the real-slice isometry and sewing
equivariance put $U_j(\operatorname{Re}w)^{-1}\ell$ in
$\mathcal D_{\varepsilon_1/4}(w)$, so $\ell'$ is a section of
$\mathcal D_{3\varepsilon_1/8}$; uniqueness gives $\ell'=\ell_\infty$, i.e.\
$\ell_\infty(w+e_j)=U_j(w)\ell_\infty(w)$.

\emph{Restriction.} Over $\R^2$ the continuous section $\ell_s=E^s$ of
$\mathcal D_{\varepsilon_1/4}$ is $\Phi$-fixed ($\mathcal N$-fold invariance), and the
contraction argument applies verbatim over the $S^{\mathcal N}$-invariant subset
$\R^2\subseteq\Omega_\delta$; uniqueness there forces
$\ell_\infty|_{\R^2}=\ell_s$. Finally $\tilde\ell:=\ell_\infty$ satisfies
$A^{(\mathcal N)}(w)^{-1}\tilde\ell(S^{\mathcal N}w)=\tilde\ell(w)$ by fixedness.
\end{proof}

\begin{remark}\label{rem:onestep}
On $\R^2$, one-step invariance $A(z)\ell_s(z)=\ell_s(Sz)$ holds by hypothesis. We do
not need (and do not claim) one-step invariance of $\tilde\ell$ off the real slice; the
$\mathcal N$-block fixedness and the sewing equivariance are what the sequel consumes.
\end{remark}

\section{The Stein normalization: an analytic charge gauge}\label{sec:stein}

\begin{lemma}[the strip torus is Stein; its line bundles]\label{lem:stein}
Let $X_\delta:=\Omega^\circ_\delta/\Z^2$ (the $\Z^2$-action $w\mapsto w+e_j$ is free,
properly discontinuous, holomorphic). Then:
\begin{enumerate}
\item $X_\delta$ is biholomorphic to a product of two plane annuli via
$(w_1,w_2)\mapsto(\eu^{2\pi\ii w_1},\eu^{2\pi\ii w_2})$; in particular $X_\delta$ is
Stein;
\item the inclusion $\T^2\hookrightarrow X_\delta$ (real slice) is a deformation
retract; restriction gives an isomorphism $H^2(X_\delta;\Z)\cong H^2(\T^2;\Z)\cong\Z$,
and topological complex line bundles on either space are classified by $c_1$;
\item the exponential sequence and Cartan's Theorem~B give
$\Pic(X_\delta)\cong H^2(X_\delta;\Z)$ via $c_1$; in particular a holomorphic line
bundle on $X_\delta$ is holomorphically trivial iff it is topologically trivial;
\item likewise $\Pic(\Omega^\circ_\delta)=0$: every holomorphic line bundle on the
contractible Stein tube $\Omega^\circ_\delta$ is holomorphically trivial.
\end{enumerate}
\end{lemma}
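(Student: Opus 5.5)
The plan is to prove the four items in order, each from a standard fact about Stein manifolds. The exponential map does the geometry.

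\emph{Item (1).} Write $z_j=\eu^{2\pi\ii w_j}$. Then $\abs{z_j}=\eu^{-2\pi\operatorname{Im}w_j}$, so $\Omega^\circ_\delta$ is mapped onto
\[
\mathcal A_\delta\times\mathcal A_\delta,\qquad \mathcal A_\delta:=\{\eu^{-2\pi\delta}<\abs z<\eu^{2\pi\delta}\}.
\]
The map $(w_1,w_2)\mapsto(z_1,z_2)$ is a holomorphic covering whose deck group is exactly $\Z^2$ acting by $w\mapsto w+e_j$. Hence it descends to a biholomorphism $X_\delta\to\mathcal A_\delta^2$. Each annulus is a domain in $\C$, hence Stein, and a product of Stein manifolds is Stein.

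\emph{Item (2).} The homotopy $H_t(w)=\operatorname{Re}w+\ii(1-t)\operatorname{Im}w$ commutes with integer translations and stays in $\Omega^\circ_\delta$. It therefore descends to a strong deformation retraction of $X_\delta$ onto the real slice $\T^2$. This gives $H^2(X_\delta;\Z)\cong H^2(\T^2;\Z)\cong\Z$ by restriction. Continuous line bundles on a paracompact space of the homotopy type of a CW complex are classified by $[X,\mathbb{CP}^\infty]=H^2(X;\Z)$ through $c_1$ \cite{HatcherVB,Husemoller}, and this applies to both spaces.

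\emph{Item (3).} Use the exponential sheaf sequence $0\to\Z\to\mathcal O\to\mathcal O^*\to0$, whose second map is $f\mapsto\eu^{2\pi\ii f}$. Its long exact sequence contains
\[
H^1(X_\delta,\mathcal O)\to H^1(X_\delta,\mathcal O^*)\to H^2(X_\delta;\Z)\to H^2(X_\delta,\mathcal O).
\]
By Cartan's Theorem~B both outer groups vanish on the Stein manifold $X_\delta$. Hence $\Pic(X_\delta)=H^1(X_\delta,\mathcal O^*)\cong H^2(X_\delta;\Z)$. The connecting map is the topological $c_1$, up to the usual sign convention, which is irrelevant for an isomorphism. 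Both the holomorphic and the topological classification are therefore given by the same map $c_1$. So a holomorphic line bundle is holomorphically trivial iff $c_1=0$, iff it is topologically trivial.

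\emph{Item (4).} Apply the same argument to $\Omega^\circ_\delta$. It is convex, hence Stein (a convex domain is a domain of holomorphy) and contractible. Then $H^2(\Omega^\circ_\delta;\Z)=0$ and Theorem~B give $\Pic(\Omega^\circ_\delta)=0$.

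The only point needing care is the identification of the connecting homomorphism with the topological first Chern class. For this I would cite the standard fact that the coboundary of $\mathcal O^*\to\Z$ in Čech cohomology computes $c_1$, whose integer image is the class given by the Chern--Weil normalization used in Lemma~\ref{lem:degree}. Item (3) only needs it to be an isomorphism compatible with restriction to $\T^2$, and that holds by naturality.
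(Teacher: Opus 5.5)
Your proposal is correct and follows essentially the same route as the paper: the exponential covering onto a product of annuli for (1), the scaling of imaginary parts to zero for (2), the exponential sheaf sequence with Cartan's Theorem~B for (3), and the same argument on the convex, contractible tube for (4). Your remark that the connecting homomorphism is the topological $c_1$ (up to sign) spells out a standard fact that the paper uses implicitly.
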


\begin{proof}
(1) $\zeta\mapsto\eu^{2\pi\ii\zeta}$ maps $\{\abs{\operatorname{Im}\zeta}<\delta\}/\Z$
bijectively holomorphically onto the annulus
$\{\eu^{-2\pi\delta}<\abs q<\eu^{2\pi\delta}\}\subset\C$. A product of planar domains
is a domain of holomorphy, hence Stein (see \cite{Hormander,GrauertRemmert}); so is
the tube $\Omega^\circ_\delta$ (convex). Locally, branches of
$(2\pi\ii)^{-1}\log$ give the inverse of $\zeta\mapsto\eu^{2\pi\ii\zeta}$, and any two
such branches differ by an integer.
(2) Scale the imaginary parts to zero: $(w,t)\mapsto\operatorname{Re}w+ (1-t)\ii
\operatorname{Im}w$ is a retraction homotopy; homotopy invariance of cohomology and the
classification of line bundles by $c_1$ \cite{HatcherVB,Husemoller} give the claim.
(3) From $0\to\Z\to\mathcal O\to\mathcal O^\times\to0$ and
$H^1(X,\mathcal O)=H^2(X,\mathcal O)=0$ on a Stein manifold (Cartan's Theorem~B;
\cite[Ch.~VII]{Hormander}, \cite{GrauertRemmert}), the connecting map
$c_1\colon H^1(X,\mathcal O^\times)\to H^2(X;\Z)$ is an isomorphism.
(4) Combine (3) for $\Omega^\circ_\delta$ with $H^2(\Omega^\circ_\delta;\Z)=0$.
\end{proof}

\begin{theorem}[real-analytic normalized gauge]\label{thm:gauge}
Assume strip-admissibility and a dominated splitting, and let
$\tilde\ell$ be the holomorphic equivariant extension of $\ell_s$ from
Theorem~\ref{thm:stripline} on $\Omega^\circ_\delta$. Then there is a real-analytic
unit section $e_{\mathrm{an}}\colon\R^2\to\C^2$ of $\ell_s$ satisfying the normalized
automorphy \eqref{eq:K3} with the same charge $m=m(\ell_s)$ as in
Lemma~\ref{lem:charge}.
\end{theorem}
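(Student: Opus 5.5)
The plan is to build a holomorphic nonvanishing section of the pulled-back tautological bundle on the strip, descend it to the Stein quotient $X_\delta$, and use Lemma~\ref{lem:stein}(3) to put its transition data into the charge form $\rho^{(m)}$. Normalizing pointwise on the real slice then gives the required real-analytic unit section.

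\emph{Step 1 (a holomorphic frame on the tube).} Pull back the tautological line bundle $\mathcal O(-1)\to\CP\subset\Proj{\C^2}$ under the holomorphic map $\tilde\ell\colon\Omega^\circ_\delta\to\CP$ of Theorem~\ref{thm:stripline}. This gives a holomorphic line subbundle $\tilde\ell^*\mathcal O(-1)\subset\Omega^\circ_\delta\times\C^2$. By Lemma~\ref{lem:stein}(4) it is holomorphically trivial, so there is a holomorphic $E_0\colon\Omega^\circ_\delta\to\C^2\setminus\{0\}$ with $E_0(w)\in\tilde\ell(w)$.

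\emph{Step 2 (holomorphic automorphy factors and descent).} By the sewing equivariance $\tilde\ell(w+e_j)=U_j(w)\tilde\ell(w)$, and since $U_j(w)$ is invertible on the strip (Lemma~\ref{lem:cauchy}(3)), we may write
\[
E_0(w+e_j)=a_j(w)\,U_j(w)E_0(w)
\]
with $a_j\colon\Omega^\circ_\delta\to\C^\times$ holomorphic. Each $a_j$ is a ratio of holomorphic coordinates of collinear nonzero vectors, so it is holomorphic locally and hence globally. The consistency relation~\eqref{eq:K1} extends holomorphically to the strip (Lemma~\ref{lem:cauchy}(1)), so the pair $(a_1,a_2)$ satisfies the multiplicative cocycle relation~\eqref{eq:sigmacons}. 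It therefore defines a holomorphic line bundle $\mathcal L$ on $X_\delta$. Comparing with the unitary normalization $e_0$ on $\R^2$ used in Lemma~\ref{lem:charge}, the restriction of $(a_j)$ to $\R^2$ equals $\sigma_j$ times a coboundary of the positive function $\abs{E_0}$ and a phase. By the invariance statement in Lemma~\ref{lem:norm}, extended verbatim to $\C^\times$-valued continuous cocycles through the lift-and-$n(w)$ argument, the integer attached to $(a_j)$ is $m$. Via the retraction in Lemma~\ref{lem:stein}(2) this integer is $c_1(\mathcal L)$ up to the fixed sign convention of Lemma~\ref{lem:degree}.

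\emph{Step 3 (Stein normalization).} The model cocycle $\rho^{(m)}=(1,\eu^{2\pi\ii m w_1})$ is holomorphic on the strip and defines a holomorphic line bundle with the same $c_1$. By Lemma~\ref{lem:stein}(3) the two bundles are holomorphically isomorphic. Concretely, there is a holomorphic $\Theta\colon\Omega^\circ_\delta\to\C^\times$ with
\[
a_j=\rho^{(m)}_j\,\Theta(\cdot+e_j)/\Theta .
\]
Set $E:=\Theta^{-1}E_0$, which is holomorphic and satisfies $E(w+e_1)=U_1E(w)$ and $E(w+e_2)=\eu^{2\pi\ii m w_1}U_2(w)E(w)$.

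\emph{Step 4 (unitarize on the real slice).} On $\R^2$ the function $\abs{E}$ is real analytic and positive. It is also $\Z^2$-periodic, since $U_j$ is unitary and $\abs{\eu^{2\pi\ii m w_1}}=1$ for real $w_1$. Put
\[
e_{\mathrm{an}}:=E|_{\R^2}/\abs{E|_{\R^2}} .
\]
This is real analytic, takes values in $\ell_s$ because $\tilde\ell|_{\R^2}=\ell_s$, has unit length, and satisfies~\eqref{eq:K3}. The uniqueness part of Lemma~\ref{lem:charge} then gives $m(\ell_s)=m$, independently of the bookkeeping in Step 2.

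The main obstacle is Step 2/3, namely identifying the Chern class of the descended holomorphic bundle with $m$ with the right sign, and making sure that Cartan's Theorem~B yields a holomorphic $\Theta$ on the whole open tube rather than only on compacta. The uniqueness clause of Lemma~\ref{lem:charge} sidesteps the sign question. For that reason I would first try to argue only that $c_1(\mathcal L)$ is \emph{some} integer $m'$ realizable by $\rho^{(m')}$, normalize to it, and then let that uniqueness clause force $m'=m$.
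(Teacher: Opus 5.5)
Your proposal is correct and is essentially the paper's proof: a holomorphic frame from $\Pic(\Omega^\circ_\delta)=0$, holomorphic automorphy factors satisfying the cocycle relation, Stein normalization on $X_\delta$ to the charge factors $\rho^{(m)}$, and unitarization on the real slice. The paper avoids your sign and globality worries in the way your Step~2 computation already points to: on $\R^2$ the quotient $a_j/\rho^{(m)}_j$ is $\partial_j$ of a zero-free continuous function ($\abs{E_0}$ times a phase), so the factor bundle of $(a_j/\rho^{(m)}_j)$ is topologically trivial, hence holomorphically trivial on all of $X_\delta$ by Lemma~\ref{lem:stein}(3), and your fallback through the uniqueness clause of Lemma~\ref{lem:charge} is also valid.
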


\begin{proof}
\emph{A holomorphic frame with holomorphic factors.} The pullback
$\tilde\ell^{\,*}\gamma$ of the tautological bundle under
$\tilde\ell\colon\Omega^\circ_\delta\to\CP$ is holomorphically trivial
(Lemma~\ref{lem:stein}(4)), so there is a holomorphic nonvanishing
$s_0\colon\Omega^\circ_\delta\to\C^2$ with $s_0(w)\in\tilde\ell(w)$. Since
$\tilde\ell(w+e_j)=U_j(w)\tilde\ell(w)$ and $U_j(w)$ is invertible on the strip,
$s_0(w+e_j)=t_j(w)U_j(w)s_0(w)$ defines holomorphic zero-free
$t_j\colon\Omega^\circ_\delta\to\C^\times$. Computing $s_0(w+e_1+e_2)$ along the two
edge orders and using \eqref{eq:K1} (valid on the strip by
Lemma~\ref{lem:cauchy}(1)) gives the consistency
$t_1(w+e_2)t_2(w)=t_2(w+e_1)t_1(w)$.

\emph{The class of the factors, computed on the real slice.} On $\R^2$ let
$\hat e:=s_0|_{\R^2}/\abs{s_0}$; it is a continuous unit section of $\ell_s$, and its
automorphy cocycle is $\hat\sigma_j=t_j/\abs{t_j}$ with
$\abs{t_j}=\abs{s_0(\cdot+e_j)}/\abs{s_0}$ (unitarity of $U_j$ at real points). Let
$e_{\mathrm{ref}}$ be the continuous unit section from Lemma~\ref{lem:charge}. Since
$\hat e=\mu e_{\mathrm{ref}}$ for a continuous unimodular $\mu$, $\hat\sigma$ is
cohomologous to the cocycle of that lemma, so
by Lemma~\ref{lem:norm} it normalizes with the \emph{same} integer $m$:
$\hat\sigma_j=\rho^{(m)}_j\,\partial_j\omega$ for some continuous
$\omega\colon\R^2\to S^1$. Hence, on $\R^2$,
\[
\frac{t_j}{\rho^{(m)}_j}=\partial_j\omega\cdot
\frac{\abs{s_0(\cdot+e_j)}}{\abs{s_0}}
=\partial_j\Bigl(\omega\,\abs{s_0}\Bigr),
\]
a continuous $\C^\times$-coboundary.

\emph{The factor bundle and its trivialization.} The pair
$c_j:=t_j/\rho^{(m)}_j$ (with $\rho^{(m)}_2(w)=\eu^{2\pi\ii m w_1}$ entire and
zero-free) is a consistent holomorphic $\C^\times$-factor system on
$\Omega^\circ_\delta$; let $\mathcal N_m$ be the holomorphic line bundle on
$X_\delta$ obtained as the quotient of $\Omega^\circ_\delta\times\C$ by the
$\Z^2$-action $(w,\xi)\mapsto(w+e_j,c_j(w)\xi)$ (consistency is exactly the cocycle
condition; the action is free and properly discontinuous, and the quotient is locally
trivial over evenly covered charts). By construction, holomorphic sections of
$\mathcal N_m$ over $X_\delta$ are precisely holomorphic $h$ on
$\Omega^\circ_\delta$ with $h(w+e_j)=c_j(w)h(w)$, and continuous sections of
$\mathcal N_m|_{\T^2}$ are continuous functions on $\R^2$ with the same automorphy.
The displayed coboundary $\omega\abs{s_0}$ is such a function, and it is zero-free
because $\abs\omega=1$ and $s_0$ is nonvanishing; a line bundle with a zero-free
continuous section is topologically trivial, so $c_1(\mathcal N_m|_{\T^2})=0$; by Lemma~\ref{lem:stein}(2) and
naturality of $c_1$, $\mathcal N_m$ is topologically trivial on $X_\delta$; by
Lemma~\ref{lem:stein}(3) it is \emph{holomorphically} trivial. Thus there is a
holomorphic zero-free $h\colon\Omega^\circ_\delta\to\C^\times$ with
$h(w+e_j)=c_j(w)h(w)$.

\emph{Normalization.} Set $s:=h^{-1}s_0$: holomorphic, zero-free, $s(w)\in
\tilde\ell(w)$, with
\begin{equation}\label{eq:analyticautomorphy}
s(w+e_j)=\rho^{(m)}_j(w)\,U_j(w)\,s(w)\qquad(j=1,2).
\end{equation}
At real points $\rho^{(m)}_j$ is unimodular and $U_j$ unitary, so $\abs s$ is
$\Z^2$-periodic; it is real-analytic and positive, being $\sqrt{\ip ss}$ for
holomorphic $s$. Then $e_{\mathrm{an}}:=s|_{\R^2}/\abs s$ is a real-analytic unit
section of $\ell_s$ satisfying \eqref{eq:K3} with the integer $m$.
\end{proof}

\begin{remark}
The Oka--Grauert principle is not needed: both trivializations above follow from
Cartan's Theorem~B via the exponential sequence. Degree plays no role in the argument
---the construction works for every value of $m$; this is exactly what makes the
normalization useful for nontrivial $L$.
\end{remark}

\section{Diophantine cohomology and the proof of Theorem B}\label{sec:analytic}

\begin{definition}[Diophantine condition]\label{def:DC}
$\tau=(\alpha,\beta)$ satisfies $\mathrm{DC}(c,\nu)$, $c>0$, $\nu\ge1$, if
\[
\norm{\nu_1'\alpha+\nu_2'\beta}_{\R/\Z}\ \ge\ \frac{c}{(\abs{\nu_1'}+\abs{\nu_2'})^{\nu}}
\qquad\text{for all }(\nu_1',\nu_2')\in\Z^2\setminus\{0\}.
\]
Here $\norm{x}_{\R/\Z}:=\min_{n\in\Z}\abs{x-n}$; for
$\nu'=(\nu_1',\nu_2')$, we use $\abs{\nu'}_1:=\abs{\nu_1'}+\abs{\nu_2'}$ and
$\abs{\nu'}_\infty:=\max\{\abs{\nu_1'},\abs{\nu_2'}\}$.
$\mathrm{DC}(c,\nu)$ implies rational independence of $1,\alpha,\beta$ (a rational
relation would make some $\norm{\nu'\cdot\tau}_{\R/\Z}=0$), hence ergodicity of $S$.
\end{definition}

\begin{lemma}[a cubic example]\label{lem:cubic}
Let $\theta=2^{1/3}$ and $\tau^*=(\theta-1,\ \theta^2-1)$. Then
$\tau^*\in\mathrm{DC}(1/14,\,2)$.
\end{lemma}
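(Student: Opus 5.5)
The plan is the standard norm argument in the cubic field $\Q(\theta)$, $\theta=2^{1/3}$.

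\textbf{Reduction to an algebraic integer.} Fix $(a,b)\in\Z^2\setminus\{0\}$ and put $H:=\abs a+\abs b\ge1$. Let $p\in\Z$ be the integer nearest to $a(\theta-1)+b(\theta^2-1)$. Then
\[
\norm{a\alpha+b\beta}_{\R/\Z}=\abs{\gamma},\qquad \gamma:=(-p-a-b)+a\theta+b\theta^2\in\Z[\theta].
\]
Since $x^3-2$ is irreducible over $\Q$ (Eisenstein at $2$), the numbers $1,\theta,\theta^2$ are linearly independent over $\Q$. As $(a,b)\neq0$, this gives $\gamma\neq0$.

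\textbf{Norm bound.} Let $\omega=\eu^{2\pi\ii/3}$. The two nonreal conjugates of $\gamma$ are $\gamma'=(-p-a-b)+a\theta\omega+b\theta^2\omega^2$ and $\gamma''=\overline{\gamma'}$. The field norm is $N(\gamma)=\gamma\,\abs{\gamma'}^2$. This is a nonzero rational integer, because $\gamma$ is a nonzero algebraic integer. Hence $\abs{N(\gamma)}\ge1$, which gives
\[
\abs\gamma\ \ge\ \abs{\gamma'}^{-2}.
\]

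\textbf{Bounding the conjugate.} Write $r:=-p-a-b$. By the choice of $p$ we have $\abs{\gamma}\le\tfrac12$, so $\abs{r}\le\abs{a}\theta+\abs b\theta^2+\tfrac12$. The triangle inequality then gives
\[
\abs{\gamma'}\ \le\ \abs r+\abs a\theta+\abs b\theta^2\ \le\ 2(\abs a\theta+\abs b\theta^2)+\tfrac12\ \le\ 2\theta^2H+\tfrac12 .
\]
Since $\theta^2<1.5875$ and $H\ge1$, this yields $\abs{\gamma'}\le3.675\,H$. Squaring, $\abs{\gamma'}^2\le13.51\,H^2\le14H^2$. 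Combining with the norm bound,
\[
\norm{a\alpha+b\beta}_{\R/\Z}\ \ge\ \frac{1}{14H^2},
\]
which is $\mathrm{DC}(1/14,2)$.

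\textbf{Expected difficulty.} The only delicate point is the numerical constant, and it closes with a small margin ($13.51<14$). The argument needs only $\theta^2<1.5875$ and the crude estimate $\abs{r}\le\abs a\theta+\abs b\theta^2+\tfrac12$. If tighter control were wanted, one could instead bound $\gamma'-\gamma=a\theta(\omega-1)+b\theta^2(\omega^2-1)$ by $\sqrt3\,\theta^2H$. That gives $\abs{\gamma'}\le\sqrt3\,\theta^2H+\tfrac12$, which is even better.
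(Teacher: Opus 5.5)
Your proof is correct and is essentially the paper's argument: you take the nearest integer, note that $\gamma\in\Z[\theta]$ is a nonzero algebraic integer so that $\abs{N(\gamma)}=\abs\gamma\,\abs{\gamma'}^2\ge1$, and bound the conjugates by $\tfrac12+2\theta^2H\le(\tfrac12+2\theta^2)H$, using $(\tfrac12+2\theta^2)^2<14$. The only differences are cosmetic. You fold the shift $-a-b$ into the rational part of $\gamma$ instead of first reducing to $\norm{a\theta+b\theta^2}_{\R/\Z}$, and your remark that $\abs{\gamma'-\gamma}\le\sqrt3\,\theta^2H$ yields a better constant is also correct.
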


\begin{proof}
For $(\nu_1',\nu_2')\neq0$, $\norm{\nu_1'(\theta-1)+\nu_2'(\theta^2-1)}_{\R/\Z}
=\norm{\nu_1'\theta+\nu_2'\theta^2}_{\R/\Z}$. Let $p$ be a nearest integer to
$\nu_1'\theta+\nu_2'\theta^2$ and $\gamma:=\nu_1'\theta+\nu_2'\theta^2-p\in\Z[\theta]$,
an algebraic integer of the field $\Q(\theta)$; $\gamma\neq0$ since $1,\theta,\theta^2$
are linearly independent over $\Q$ (irreducibility of $x^3-2$). Hence
$\abs{N(\gamma)}=\abs{\gamma\gamma'\gamma''}\ge1$, where the conjugates are obtained by
$\theta\mapsto\theta\omega,\theta\omega^2$ ($\omega=\eu^{2\pi\ii/3}$). If
$\abs\gamma\ge\tfrac12$ the claimed bound is trivial. If $\abs\gamma\le\tfrac12$, then
$\abs p\le\tfrac12+\theta\abs{\nu_1'}+\theta^2\abs{\nu_2'}$ and so, for each conjugate,
\[
\abs{\gamma^{(i)}}\le\abs p+\theta\abs{\nu_1'}+\theta^2\abs{\nu_2'}
\le\tfrac12+2\theta^2\bigl(\abs{\nu_1'}+\abs{\nu_2'}\bigr)
\le\bigl(\tfrac12+2\theta^2\bigr)\bigl(\abs{\nu_1'}+\abs{\nu_2'}\bigr),
\]
whence
$\abs\gamma\ge\bigl(\tfrac12+2\theta^2\bigr)^{-2}
(\abs{\nu_1'}+\abs{\nu_2'})^{-2}\ge\tfrac1{14}(\abs{\nu_1'}+\abs{\nu_2'})^{-2}$,
using $\tfrac12+2\theta^2<3.68$. Such badly approximable linear forms from cubic
fields are classical; see \cite{CSD1955,Davenport1964,Schmidt1980}.
\end{proof}

\begin{lemma}[analytic cohomological equation]\label{lem:solve}
Let $\tau\in\mathrm{DC}(c,\nu)$ and let $h\colon\T^2\to\R$ be real analytic. Then there
is a real-analytic $v\colon\T^2\to\R$ with
\[
v(w)-v(Sw)=h(w)-\langle h\rangle\qquad(w\in\T^2).
\]
\end{lemma}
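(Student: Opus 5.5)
We solve the equation by Fourier series, using the Diophantine condition to control the small divisors. Write $h(w)=\sum_{\nu'\in\Z^2}\hat h_{\nu'}\eu^{2\pi\ii\nu'\cdot w}$.

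\emph{Step 1: decay of the data.} Since $h$ is real analytic on the compact torus $\T^2$, it extends holomorphically to a complex strip $\Omega_\delta$ for some $\delta>0$. Shifting the contour of the Fourier integral into the strip gives the exponential decay
\[
\abs{\hat h_{\nu'}}\le C\,\eu^{-2\pi\delta\abs{\nu'}_1}.
\]

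\emph{Step 2: the formal solution.} Look for $v=\sum\hat v_{\nu'}\eu^{2\pi\ii\nu'\cdot w}$. Since $v(Sw)$ has coefficients $\hat v_{\nu'}\eu^{2\pi\ii\nu'\cdot\tau}$, the equation reads $\hat v_{\nu'}(1-\eu^{2\pi\ii\nu'\cdot\tau})=\hat h_{\nu'}$ for $\nu'\neq0$. The $\nu'=0$ equation is $0=0$, because $\langle h\rangle$ has been subtracted. Set $\hat v_0:=0$ and
\[
\hat v_{\nu'}:=\frac{\hat h_{\nu'}}{1-\eu^{2\pi\ii\nu'\cdot\tau}}\qquad(\nu'\neq0).
\]
The denominator is nonzero by rational independence, which $\mathrm{DC}(c,\nu)$ implies.

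\emph{Step 3: small-divisor bound.} This is the only substantive step. Use $\abs{1-\eu^{2\pi\ii x}}=2\abs{\sin\pi x}\ge4\norm{x}_{\R/\Z}$. Combined with $\mathrm{DC}(c,\nu)$ it gives
\[
\abs{\hat v_{\nu'}}\le\frac{C}{4c}\,\abs{\nu'}_1^{\nu}\,\eu^{-2\pi\delta\abs{\nu'}_1}\le C'\,\eu^{-\pi\delta\abs{\nu'}_1}.
\]
The polynomial loss is absorbed by halving the strip. The series therefore converges absolutely and uniformly on $\Omega_{\delta'}$ for any $\delta'<\delta/2$, so it defines a function holomorphic there and $\Z^2$-periodic. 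Its restriction $v$ to $\T^2$ is real analytic. Termwise evaluation is justified by absolute convergence, so $v$ satisfies the equation pointwise on $\T^2$.

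\emph{Step 4: reality.} Since $h$ is real, $\hat h_{-\nu'}=\overline{\hat h_{\nu'}}$. The divisor for $-\nu'$ is the conjugate of the divisor for $\nu'$, so $\hat v_{-\nu'}=\overline{\hat v_{\nu'}}$ and $v$ is real valued. Alternatively, replace $v$ by $\operatorname{Re}v$: this still solves the equation, because the right-hand side is real and the translation $S$ is real.
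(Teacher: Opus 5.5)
Your proposal is correct and follows essentially the same route as the paper: you expand in Fourier series, obtain exponential decay of $\hat h_{\nu'}$ from a holomorphic extension to a strip, and set $\hat v_{\nu'}=\hat h_{\nu'}/(1-\eu^{2\pi\ii\nu'\cdot\tau})$. You then control the small divisors by $2\abs{\sin\pi x}\ge4\norm{x}_{\R/\Z}$ together with $\mathrm{DC}(c,\nu)$, absorb the polynomial loss on a thinner strip, and get reality from $\hat v_{-\nu'}=\overline{\hat v_{\nu'}}$. The only difference is cosmetic: you shift contours in both variables at once to get decay in $\abs{\nu'}_1$, whereas the paper uses decay in $\abs{\nu'}_\infty$ on a halved strip, and either suffices.
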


\begin{proof}
A real-analytic $\Z^2$-periodic function extends holomorphically to some
$\Omega^\circ_{\delta'}$ \cite{KrantzParks}, and shifting the contour of integration in
each variable (periodicity cancels the boundary terms) gives the exponential decay
$\abs{\hat h_{\nu'}}\le C\eu^{-2\pi\delta''\abs{\nu'}_\infty}$ with
$\delta''=\delta'/2$. Comparing Fourier coefficients, the equation forces
\[
\hat v_{\nu'}=\frac{\hat h_{\nu'}}{1-\eu^{2\pi\ii\nu'\cdot\tau}}\quad(\nu'\neq0),
\qquad \hat v_0:=0 .
\]
Small divisors: $\abs{1-\eu^{2\pi\ii\nu'\cdot\tau}}=2\abs{\sin\pi\nu'\cdot\tau}\ge
4\norm{\nu'\cdot\tau}_{\R/\Z}\ge4c\,(\abs{\nu_1'}+\abs{\nu_2'})^{-\nu}$ (using
$\abs{\sin\pi x}\ge2\norm x_{\R/\Z}$). Hence
$\abs{\hat v_{\nu'}}\le\tfrac{C}{4c}(\abs{\nu_1'}+\abs{\nu_2'})^{\nu}
\eu^{-2\pi\delta''\abs{\nu'}_\infty}$, still exponentially decaying. On every strictly
smaller strip the polynomial factor is absorbed into a smaller exponential; hence the
Fourier series $v:=\sum\hat v_{\nu'}\eu^{2\pi\ii\nu'\cdot w}$ converges normally there
and defines a holomorphic extension, so its real restriction is real analytic. The
function $v$ is real-valued because $\hat v_{-\nu'}=\overline{\hat v_{\nu'}}$ (from
$\hat h_{-\nu'}=\overline{\hat h_{\nu'}}$ and conjugation of the divisor). Finally the
defining relation holds pointwise by continuity.
\end{proof}

\begin{lemma}[the $E^u$ case by inversion]\label{lem:inverse}
Assume strip-admissibility and a dominated splitting for $(U,A)$ over $S$. Define
$\hat A(w):=A(S^{-1}w)^{-1}$ over $\hat S:=S^{-1}$. Then $(U,\hat A)$ is
strip-admissible over $\hat S$ (same sewing), the pair
$(\hat E^s,\hat E^u):=(E^u,E^s)$ is a dominated splitting for $(U,\hat A)$ with the
same constants. If $\tau\in\mathrm{DC}(c,\nu)$, then $-\tau\in\mathrm{DC}(c,\nu)$.
Consequently
Lemma~\ref{lem:uc}, Theorem~\ref{thm:stripline} and Theorem~\ref{thm:gauge}---all
already proved, for the stable line of the data to which they are applied---apply to
$(U,\hat A)$, whose stable line is $\ell_u$; they produce a holomorphic equivariant
extension of $\ell_u$ on a strip and a real-analytic
normalized unit section $e_{u,\mathrm{an}}$ of $\ell_u$ with charge $m(\ell_u)$.
\end{lemma}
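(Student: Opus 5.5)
The plan is to verify the hypotheses of Lemma~\ref{lem:uc}, Theorem~\ref{thm:stripline} and Theorem~\ref{thm:gauge} for the inverted data $(U,\hat A)$ over $\hat S$, with the roles of the two lines exchanged. Everything then follows by citation. The hypotheses split into three groups: equivariance and strip-admissibility of $\hat A$, the invariance and domination of the swapped splitting, and the Diophantine condition for $-\tau$.

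\emph{Equivariance and strip-admissibility.} Write \eqref{eq:K2} at the point $S^{-1}w$:
\[
A(S^{-1}w+e_j)\,U_j(S^{-1}w)=U_j(w)\,A(S^{-1}w).
\]
Inverting both sides and rearranging gives
\[
\hat A(w+e_j)\,U_j(w)=U_j(\hat Sw)\,\hat A(w),
\]
which is \eqref{eq:K2} for $(U,\hat A)$ over $\hat S$. The sewing system, and hence \eqref{eq:K1}, is unchanged.

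For strip-admissibility, note that $A(S^{-1}\cdot)$ is holomorphic and bounded on a neighborhood of $\Omega_{\delta_0}$, because the tube is invariant under the real translation $S^{-1}$. The only delicate point is the inverse: it is holomorphic and bounded only where $\abs{\det A}$ is bounded below. Lemma~\ref{lem:cauchy}(3) supplies $\inf_{\Omega_{\delta_1}}\abs{\det A}>0$, and $\adj A$ is bounded there. Replacing $\delta_0$ by any $\delta_0'<\delta_1$ (admissible, since $\delta_0$ is existentially quantified) makes $\hat A$ holomorphic on a neighborhood of $\Omega_{\delta_0'}$ and bounded on it. This is the step I expect to need the most care, but it is only bookkeeping.

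\emph{Invariance and domination.} Apply $A(S^{-1}w)^{-1}$ to the relation $A(S^{-1}w)\,\ell(S^{-1}w)=\ell(w)$. This gives $\hat A(w)\,\ell(w)=\ell(\hat Sw)$ for $\ell\in\{\ell_s,\ell_u\}$, so both lines are $\hat A$-invariant over $\hat S$ and remain pointwise transversal.

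The $N$-block of the inverted cocycle is
\[
\hat A^{(N)}(w)=\bigl(A^{(N)}(S^{-N}w)\bigr)^{-1}.
\]
Norms of inverses of maps between lines are reciprocals, so
\[
\frac{\norm{\hat A^{(N)}(w)|_{E^u(w)}}}{\co\bigl(\hat A^{(N)}(w)|_{E^s(w)}\bigr)}
=\frac{\norm{A^{(N)}(z)|_{E^s(z)}}}{\co\bigl(A^{(N)}(z)|_{E^u(z)}\bigr)}
=D_N(z)\le\kappa_0^2,\qquad z:=S^{-N}w.
\]
This is the domination bound \eqref{eq:dom} for the pair $(\hat E^s,\hat E^u)=(E^u,E^s)$, with the same $N$ and $\kappa_0$.

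\emph{Diophantine condition.} It is immediate, since
\[
\norm{\nu'\cdot(-\tau)}_{\R/\Z}=\norm{\nu'\cdot\tau}_{\R/\Z}.
\]
Lemma~\ref{lem:uc}, Theorem~\ref{thm:stripline} and Theorem~\ref{thm:gauge} use only strip-admissibility, domination and reality of the translation vector, so they apply verbatim to $(U,\hat A)$, whose stable line is $\ell_u$.

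\emph{Conclusion.} The cited results give a holomorphic equivariant extension of $\ell_u$ on a strip and a real-analytic unit section $e_{u,\mathrm{an}}$ satisfying \eqref{eq:K3}. Its charge is $m(\ell_u)$: Lemma~\ref{lem:charge} uses only the sewing relation, which is the same for both cocycles, so the charge does not depend on which cocycle is considered.
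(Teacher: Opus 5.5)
Your proposal is correct and follows the paper's proof essentially step for step. You obtain \eqref{eq:K2} for $\hat A$ by inverting the relation at $S^{-1}w$, the strip bounds from $\inf_{\Omega_{\delta_1}}\abs{\det A}>0$ and the adjugate, domination from $\hat A^{(N)}(w)=A^{(N)}(S^{-N}w)^{-1}$ with reciprocal line norms giving $D_N(S^{-N}w)\le\kappa_0^2$, and the symmetry of $\mathrm{DC}$ under $\tau\mapsto-\tau$; your shrinking to some $\delta_0'<\delta_1$ (so $\hat A$ is holomorphic on a neighborhood of the \emph{closed} tube) and your remark that the charge depends only on the sewing are small clarifications the paper leaves implicit.
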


\begin{proof}
Equivariance: from \eqref{eq:K2} at $z=S^{-1}w$,
$A(z+e_j)U_j(z)=U_j(w)A(z)$; solving for the inverse,
$A(z+e_j)^{-1}U_j(w)=U_j(z)A(z)^{-1}$, i.e.\
$\hat A(w+e_j)U_j(w)=U_j(\hat Sw)\hat A(w)$. Invariance:
$A(S^{-1}w)E^u(S^{-1}w)=E^u(w)$ gives $\hat A(w)E^u(w)=E^u(\hat Sw)$; likewise for
$E^s$. Strip bounds: choose the positive smaller half-width
$\delta_1\in(0,\delta_0)$ supplied by Lemma~\ref{lem:cauchy}(3), so that
$\inf_{\Omega_{\delta_1}}\abs{\det A}>0$. On $\Omega_{\delta_1}$ the shifted inverse
$\hat A=\adj A(S^{-1}\cdot)/\det A(S^{-1}\cdot)$ is holomorphic and bounded because
$\adj A$ is bounded there. Domination: with
$M:=A^{(N)}(S^{-N}w)$ one has $\hat A^{(N)}(w)=M^{-1}$, and since norms of inverses of
maps between invariant lines are reciprocals,
\[
\frac{\norm{\hat A^{(N)}(w)|_{E^u(w)}}}{\co\bigl(\hat A^{(N)}(w)|_{E^s(w)}\bigr)}
=\frac{1/\co\bigl(M|_{E^u(S^{-N}w)}\bigr)}{1/\norm{M|_{E^s(S^{-N}w)}}}
=D_N(S^{-N}w)\le\kappa_0^2 .
\]
If $\tau\in\mathrm{DC}(c,\nu)$, then
$\norm{\nu'\cdot(-\tau)}_{\R/\Z}=\norm{\nu'\cdot\tau}_{\R/\Z}$ for every
$\nu'\in\Z^2\setminus\{0\}$, so $-\tau\in\mathrm{DC}(c,\nu)$. Finally, the verification
just given consumes only \eqref{eq:K2}, Definition~\ref{def:dom} and
Lemma~\ref{lem:cauchy}; Lemma~\ref{lem:uc}, Theorem~\ref{thm:stripline} and
Theorem~\ref{thm:gauge} enter only as previously proved statements now applied to the
inverse data, so the dependency order
(\ref{lem:uc}$\,\to\,$\ref{thm:stripline}$\,\to\,$\ref{thm:gauge}$\,\to\,$%
present lemma) is linear and no circularity arises.
\end{proof}

\begin{lemma}[analytic multiplier data on either line]\label{lem:analytic}
Let $(U_1,U_2,A)$ be strip-admissible with values in $U(2)$, $GL(2,\C)$
\textup{(Definition~\ref{def:stripadm})}, admitting a dominated splitting
$E^s\oplus E^u$ \textup{(Definition~\ref{def:dom})}, over $\tau\in\mathrm{DC}(c,\nu)$
\textup{(Definition~\ref{def:DC})}, and let $L\in\{E^s,E^u\}$ be either line, of
\emph{any} charge $m(L)$. Then the line field of $L$ admits a real-analytic normalized
unit section $e_{\mathrm{an}}$, with charge $m(L)$; in that gauge the multiplier $q$
is real analytic and zero free, the data $\log\abs q$ and $g$ are real analytic on
$\T^2$, and the cohomological equations of Lemma~\ref{lem:solve} for $h=\log\abs q$ and
for $h=g$ both have real-analytic solutions. In particular \textup{(H2)} holds.
\end{lemma}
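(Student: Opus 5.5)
The plan is to assemble the lemma from pieces already in place, treating $E^s$ and $E^u$ separately only at the level of the gauge. For $L=E^s$, Theorem~\ref{thm:stripline} gives the holomorphic equivariant extension $\tilde\ell$ of $\ell_s$. Theorem~\ref{thm:gauge} then produces a real-analytic unit section $e_{\mathrm{an}}$ of $\ell_s$ satisfying \eqref{eq:K3} with charge $m(\ell_s)$. For $L=E^u$, Lemma~\ref{lem:inverse} applies the same two results to the inverse data $(U,\hat A)$ over $\hat S$, whose stable line is $\ell_u$. This yields $e_{u,\mathrm{an}}$ with charge $m(\ell_u)$. Since the charge is intrinsic (Lemma~\ref{lem:charge}), in both cases we get a real-analytic normalized section of charge $m(L)$, and no assumption on $m$ is ever used.

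Next I would check analyticity of the multiplier data in this gauge. The multiplier $q(w)=\ip{e_{\mathrm{an}}(Sw)}{A(w)e_{\mathrm{an}}(w)}$ is built from real-analytic maps: $A$ restricted to $\R^2$, $e_{\mathrm{an}}$, and $e_{\mathrm{an}}\circ S$, paired by an inner product that is real-analytic in both slots. So $q$ is real analytic, and it is zero free by Lemma~\ref{lem:mult}; note this uses only the genuine one-step invariance of $\ell$ on $\R^2$, not of $\tilde\ell$ (Remark~\ref{rem:onestep}). It follows that $\log\abs q=\tfrac12\log(q\bar q)$ is real analytic and $\Z^2$-periodic. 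Likewise the periodic phase $p=(q/\abs q)\,\eu^{2\pi\ii m\alpha w_2}$ is real analytic and unimodular.

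For $g$, I would write $p\,\eu^{-2\pi\ii k\cdot w}=\eu^{2\pi\ii g}$. Locally $g$ is a branch of $(2\pi\ii)^{-1}\log$ composed with a real-analytic zero-free map, so $g$ is real analytic. This is the same argument as in Lemma~\ref{lem:log}, and it gives real analyticity on $\T^2$ because $g$ is a global continuous lift.

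Finally, Lemma~\ref{lem:solve} applies to $h=\log\abs q$ and to $h=g$ under $\tau\in\mathrm{DC}(c,\nu)$, giving real-analytic solutions. For $E^u$ the relevant data live over $S$ itself once we return to the original cocycle: the section $e_{u,\mathrm{an}}$ is a section of $\ell_u$ for $(U,A)$, so its multiplier is computed with $A$ and $\tau$. In any case $\pm\tau$ are both Diophantine. A real-analytic $v$ is in particular measurable, so (H2) holds in the sense of Definition~\ref{def:H2}. By Lemma~\ref{lem:gauge} this is independent of gauge.

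The main obstacle is bookkeeping rather than new analysis. One must be sure that the section produced for the inverse data really is a normalized section of $\ell_u$ for the original sewing. It is, since the sewing is unchanged, so \eqref{eq:K3} holds with the same $U_j$. One must also be sure that the multiplier of $(A,\ell_u)$ is analytic even though the gauge was built via $\hat A$. This holds because the gauge is purely a section of the line field, and analyticity of $q$ only needs analytic $A$ and $e$.
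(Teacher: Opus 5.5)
Your proposal is correct and follows the paper's proof essentially step for step. The gauge comes from Theorem~\ref{thm:gauge} for $E^s$ and from Lemma~\ref{lem:inverse} for $E^u$, with the multiplier then taken for the forward cocycle $A$. Analyticity of $q$, $\log\abs q$, $p$ and $g$ then follows by composition and local branches of $\log$, and Lemma~\ref{lem:solve} supplies real-analytic, hence measurable, solutions, which gives (H2). Your appeal to Lemma~\ref{lem:gauge} is harmless but not needed, since (H2) is verified directly in the analytic gauge.
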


\begin{proof}
By Theorem~\ref{thm:gauge} (for $L=E^s$) or Lemma~\ref{lem:inverse} (for $L=E^u$;
note that only the \emph{gauge} is produced by the inverse dynamics---the multiplier
below is taken with the forward cocycle $A$), the line field $\ell$ of $L$ has a
real-analytic normalized unit section $e_{\mathrm{an}}$ with charge $m=m(\ell)$, and
$c_1(L)[\T^2]=m$ by Lemma~\ref{lem:degree}.
With $e:=e_{\mathrm{an}}$ in Lemma~\ref{lem:mult}, the
multiplier $q(w)=\ip{e(Sw)}{A(w)e(w)}$ is real analytic (products, compositions and
conjugates of real-analytic maps; $\abs{s}=\sqrt{\ip ss}$ is real analytic and positive
for the holomorphic $s$ of Theorem~\ref{thm:gauge}) and zero-free, with the exact
quasi-periodicity \eqref{eq:qquasi} (that derivation consumes only \eqref{eq:K3} and
\eqref{eq:K2}). Hence $p=(q/\abs q)\eu^{2\pi\ii m\alpha w_2}$ is real analytic,
$\Z^2$-periodic and unimodular; its zero-winding phase $g$ (Lemma~\ref{lem:mult}) is
continuous and locally of the form
$\tfrac1{2\pi\ii}\log\bigl(p\,\eu^{-2\pi\ii k\cdot w}\bigr)$ for a holomorphic branch of
$\log$ composed with a real-analytic zero-free function, hence real analytic. Lemma
\ref{lem:solve} now applies to each of the real-analytic real-valued functions
$\log\abs q$ and $g$, and its solution for $h=g$ is in particular a measurable solution
of \eqref{eq:H2}, so \textup{(H2)} holds. No step used any hypothesis on $c_1(L)$.
Theorem~\ref{thm:stripline} and Lemma~\ref{lem:solve} are charge-independent, while
Theorem~\ref{thm:gauge} and Lemma~\ref{lem:inverse} impose no restriction on the charge
and construct the required gauge for every charge. See also Remark~7.3.
\end{proof}

\begin{theorem}[analytic--Diophantine theorem]\label{thm:B}
Let $(U_1,U_2,A)$ be strip-admissible with values in $U(2)$, $GL(2,\C)$
\textup{(Definition~\ref{def:stripadm})}, admitting a dominated splitting
$E^s\oplus E^u$ \textup{(Definition~\ref{def:dom})}, over $\tau\in\mathrm{DC}(c,\nu)$
\textup{(Definition~\ref{def:DC})}. Let $L\in\{E^s,E^u\}$ with $c_1(L)\neq0$. Then $L$
carries no nonzero measurable eigensection: for every $\lambda\in\C$, every measurable
section $F$ carried by the line field of $L$ with $A(w)F(w)=\lambda F(w+\tau)$ a.e.\
vanishes a.e.
\end{theorem}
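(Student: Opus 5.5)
The plan is to reduce Theorem~\ref{thm:B} to Theorem~\ref{thm:A}: its hypotheses are checked one at a time, with (H2) supplied by Lemma~\ref{lem:analytic}. Rank two is $n=2$ in Theorem~\ref{thm:A}. Strip-admissibility restricted to the real slice gives a continuous unitary sewing system $(U_1,U_2)$ satisfying \eqref{eq:K1}. It also gives a continuous map $A\colon\R^2\to GL(2,\C)$ satisfying \eqref{eq:K2}, since $\det A\neq0$ at real points. The Diophantine condition $\mathrm{DC}(c,\nu)$ implies rational independence of $1,\alpha,\beta$ (Definition~\ref{def:DC}), so the base is ergodic as Theorem~\ref{thm:A} requires. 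Finally, $L\in\{E^s,E^u\}$ comes from a continuous invariant line field $\ell_s$ or $\ell_u$ in the sense of Definition~\ref{def:invline}, as built into Definition~\ref{def:dom}.

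The only nontrivial input is (H2), and this is exactly Lemma~\ref{lem:analytic}, which holds for either line and any charge. One point needs care. Lemma~\ref{lem:analytic} verifies (H2) in the analytic gauge $e_{\mathrm{an}}$, while Theorem~\ref{thm:A} is phrased with the normalized unit section of Lemma~\ref{lem:charge}. Lemma~\ref{lem:gauge} says the validity of (H2) does not depend on which normalized unit section is chosen. Since $e_{\mathrm{an}}$ satisfies \eqref{eq:K3} with the same charge $m(L)$, (H2) holds for $(A,\ell)$ in the sense of Definition~\ref{def:H2}. For $L=E^u$ I will recheck that the multiplier in Lemma~\ref{lem:analytic} is formed with the forward cocycle $A$ and not with $\hat A$. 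The proof of that lemma states this explicitly, so (H2) concerns the same data that Theorem~\ref{thm:A} uses.

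With all hypotheses in place and $c_1(L)\neq0$, Theorem~\ref{thm:A} applies for each $\lambda\in\C$. Every measurable section $F$ carried by the line field of $L$ with $A(w)F(w)=\lambda F(w+\tau)$ a.e.\ therefore vanishes a.e., which is the claim. The case $\lambda=0$ is already handled inside Theorem~\ref{thm:A}. The analytic work (strip extension, Stein gauge, small divisors) was front-loaded into Lemma~\ref{lem:analytic}, so no step here is a real obstacle. The only place that needs checking is the gauge bookkeeping in the second paragraph: the analytic section must have the same charge $m$ as the reference section, and (H2) must transfer across the change of gauge.
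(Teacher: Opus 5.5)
Your proposal is correct and follows the paper's own proof: Lemma~\ref{lem:analytic} supplies \textup{(H2)}, $\mathrm{DC}$ gives ergodicity, and Theorem~\ref{thm:A} then gives the exclusion. Your explicit appeal to Lemma~\ref{lem:gauge} to move \textup{(H2)} from $e_{\mathrm{an}}$ to an arbitrary normalized unit section spells out a step the paper leaves implicit, and your check that the $E^u$ multiplier is formed with the forward cocycle matches the caveat stated in the proof of Lemma~\ref{lem:analytic}.
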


\begin{proof}
By Lemma~\ref{lem:analytic}, hypothesis \textup{(H2)} holds for $(A,\ell)$. All
hypotheses of Theorem~\ref{thm:A} are then verified ($\mathrm{DC}$ gives ergodicity), and
the exclusion follows.
\end{proof}

The same lemma discharges \textup{(H2)} on a charge-zero line, where
Theorem~\ref{thm:B} says nothing; combined with Proposition~\ref{prop:winding} this
disposes of the case $m=0$, $k\neq0$, and with Theorem~\ref{thm:Aex} it settles the
remaining case.

\begin{theorem}[analytic--Diophantine existence]\label{thm:Bex}
Let $(U_1,U_2,A)$ be strip-admissible with values in $U(2)$, $GL(2,\C)$
\textup{(Definition~\ref{def:stripadm})}, admitting a dominated splitting
$E^s\oplus E^u$ \textup{(Definition~\ref{def:dom})}, over
$\tau\in\mathrm{DC}(c,\nu)$ \textup{(Definition~\ref{def:DC})}. Let
$L\in\{E^s,E^u\}$ with
\[
m(L)=0\quad\text{(equivalently }c_1(L)=0\text{)}\qquad\text{and}\qquad k(L)=0 .
\]
Then $L$ carries a nonzero measurable eigensection. Explicitly, in the real-analytic
gauge $e_{\mathrm{an}}$ of Lemma~\ref{lem:analytic}, with $v_1,v_2$ the real-analytic
solutions of Lemma~\ref{lem:solve} for $h=\log\abs q$ and $h=g$,
\[
F_0=\eu^{-(v_1+2\pi\ii v_2)}\,e_{\mathrm{an}},\qquad
\lambda_0=\exp\Bigl(\int_{\T^2}\log\abs q\,d\mu\Bigr)\,
\exp\Bigl(2\pi\ii\int_{\T^2}g\,d\mu\Bigr),
\]
and $F_0$ is real analytic, nowhere vanishing, lies in $\mathcal H_U$, and
$\abs{\lambda_0}=\eu^{\lambda_L}$, the exponential of the Lyapunov exponent of $L$
\textup{(Lemma~\ref{lem:domcons}(3))}.
\end{theorem}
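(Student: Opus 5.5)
The plan is to reduce everything to the abstract existence theorem, Theorem~\ref{thm:Aex}. The analytic input comes from Lemma~\ref{lem:analytic}, and the charge-zero logarithm from Lemma~\ref{lem:log}. First, apply Lemma~\ref{lem:analytic} to $L$; for $L=E^u$ the gauge comes from Lemma~\ref{lem:inverse}, but the multiplier is still taken with the forward cocycle $A$. This gives a real-analytic normalized unit section $e_{\mathrm{an}}$ of charge $m(L)=0$, a real-analytic zero-free multiplier $q$, and real-analytic $\log\abs q$ and $g$ on $\T^2$. It also gives real-analytic solutions $v_1,v_2\colon\T^2\to\R$ of
\[
v_1(w)-v_1(Sw)=\log\abs{q(w)}-\langle\log\abs q\rangle,\qquad
v_2(w)-v_2(Sw)=g(w)-\langle g\rangle .
\]
Since $m=0$ and $k=0$, Lemma~\ref{lem:log} applies in this gauge. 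Hence $q$ is $\Z^2$-periodic and $h:=\log\abs q+2\pi\ii g$ is a real-analytic global logarithm of $q$. This uses the specific phase $g$ of Lemma~\ref{lem:mult}; any other choice differs by an integer, which changes $h$ only by $2\pi\ii\Z$ and leaves $\eu^{\langle h\rangle}$ unchanged.

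Second, set $\Psi:=-(v_1+2\pi\ii v_2)$, which is real analytic and bounded on $\T^2$. A one-line computation gives
\[
\Psi(Sw)-\Psi(w)=\bigl(v_1(w)-v_1(Sw)\bigr)+2\pi\ii\bigl(v_2(w)-v_2(Sw)\bigr)=h(w)-\langle h\rangle
\]
everywhere. So hypothesis (E2) of Definition~\ref{def:E2} holds with a continuous $\Psi$. Theorem~\ref{thm:Aex} then says that $F_0=\eu^{\Psi}e_{\mathrm{an}}=\eu^{-(v_1+2\pi\ii v_2)}e_{\mathrm{an}}$ is a nonzero eigensection carried by $\ell$, nowhere zero and in $\mathcal H_U$. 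Its eigenvalue is
\[
\lambda_0=\eu^{\langle h\rangle}=\exp\langle\log\abs q\rangle\,\exp\bigl(2\pi\ii\langle g\rangle\bigr).
\]
Real analyticity of $F_0$ follows because it is a product of the real-analytic scalar $\eu^{\Psi}$ with the real-analytic section $e_{\mathrm{an}}$. Also $\abs{F_0}=\eu^{-v_1}$ is bounded above and below.

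Third, for the modulus, note that $\abs{\lambda_0}=\exp\langle\log\abs q\rangle$. Here $\abs{q(w)}=\norm{A(w)|_{L(w)}}$ is independent of the chosen unit section, so $\langle\log\abs q\rangle$ is exactly the quantity $\lambda_s$ or $\lambda_u$ of Lemma~\ref{lem:domcons}(3). This gives $\abs{\lambda_0}=\eu^{\lambda_L}$.

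No step is a genuine obstacle, since the heavy lifting (the Stein gauge and the small divisors) is already in Lemma~\ref{lem:analytic}. The only points needing care are bookkeeping ones. One must check the sign convention of Lemma~\ref{lem:solve} against (E2), which the computation above does. One must also check that for $E^u$ the cohomological data are computed with the forward multiplier, so that the eigensection solves $A F_0=\lambda_0 F_0\circ S$ rather than the inverse equation.
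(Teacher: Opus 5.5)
Your proposal is correct and follows essentially the same route as the paper's proof: Lemma~\ref{lem:analytic} supplies the real-analytic gauge and the two cohomological solutions, and Lemma~\ref{lem:log} supplies the global branch $h=\log\abs q+2\pi\ii g$. Then $\Psi=-(v_1+2\pi\ii v_2)$ verifies \textup{(E2)}, Theorem~\ref{thm:Aex} produces $F_0$ and $\lambda_0=\eu^{\langle h\rangle}$, and $\abs{\lambda_0}=\eu^{\lambda_L}$ follows from Lemma~\ref{lem:domcons}(3). Your additional remarks are correct bookkeeping that the paper leaves implicit: that $\abs q$ does not depend on the unit section, and that the integer ambiguity in $g$ does not affect $\eu^{\langle h\rangle}$.
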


\begin{proof}
Lemma~\ref{lem:analytic} supplies $e_{\mathrm{an}}$ and the real analyticity of $q$,
$\log\abs q$ and $g$; since $m=0$, \eqref{eq:K3} reads
$e_{\mathrm{an}}(w+e_j)=U_j(w)e_{\mathrm{an}}(w)$ and $q$ is a real-analytic zero-free
function on $\T^2$ (Lemma~\ref{lem:log}). Since also $k=0$, Lemma~\ref{lem:log} gives the
global real-analytic branch $h=\log\abs q+2\pi\ii g$ with $\eu^h=q$. Put
$\Psi:=-(v_1+2\pi\ii v_2)$, real analytic; by Lemma~\ref{lem:solve},
\[
\Psi(Sw)-\Psi(w)
=\bigl(\log\abs q-\langle\log\abs q\rangle\bigr)+2\pi\ii\bigl(g-\langle g\rangle\bigr)
=h(w)-\langle h\rangle ,
\]
so \textup{(E2)} holds with a bounded $\Psi$ and Theorem~\ref{thm:Aex} applies, giving
$F_0=\eu^{\Psi}e_{\mathrm{an}}$ real analytic and nowhere vanishing with eigenvalue
$\lambda_0=\eu^{\langle h\rangle}$. Finally
$\abs{\lambda_0}=\eu^{\langle\log\abs q\rangle}=\eu^{\lambda_L}$ by
Lemma~\ref{lem:domcons}(3).
\end{proof}

\begin{remark}[what was used where]\label{rem:whereused}
Domination and analyticity enter only through
Theorems~\ref{thm:stripline}--\ref{thm:gauge} (construction and analytic
normalization of the line and its phase data) and Lemma~\ref{lem:solve} (the
small-divisor solve);
the obstruction itself is Theorem~\ref{thm:A}. The Diophantine condition enters only
in Lemma~\ref{lem:solve}. In particular any other route to (H2) (for a specific
cocycle, e.g.\ by inspection) yields the same conclusion without domination; see
Example~\ref{ex:both}.
\end{remark}

\section{Carried lines, examples, and limitations}\label{sec:corollaries}

\begin{lemma}[classification of carrying lines]\label{lem:carried}
Let $(U_1,U_2,A)$ be continuous data as in Section~\ref{sec:setting} admitting a
dominated splitting $E^s\oplus E^u$ \textup{(Definition~\ref{def:dom})}, with $S$
ergodic (no analyticity or Diophantine hypothesis). Let $F$ be a measurable
eigensection, $F\neq0$ on a positive-measure set, with eigenvalue $\lambda$. Then
$\lambda \neq 0$, $F\neq0$ a.e., and either $F(w)\in E^s(w)$ for a.e.\ $w$ or
$F(w)\in E^u(w)$ for a.e.\ $w$.
\end{lemma}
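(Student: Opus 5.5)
The plan is to decompose $F$ along the splitting and use domination to show that both components cannot survive. The case $\lambda=0$ is immediate from invertibility of $A(w)$, as in Definition~\ref{def:cocycle}.

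For $\lambda\neq0$, pointwise transversality lets us write uniquely $F=F_s+F_u$ with $F_s(w)\in E^s(w)$ and $F_u(w)\in E^u(w)$. The associated projections are continuous and sewing-equivariant, so $F_s,F_u$ are measurable sections. Invariance of both lines together with uniqueness of the decomposition gives $A F_s=\lambda F_s\circ S$ and $AF_u=\lambda F_u\circ S$ a.e. Thus $F_s$ and $F_u$ are again solutions of \eqref{eq:eigen2} with the same $\lambda$, each carried by an invariant line.

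Next fix continuous unit sections $e_s,e_u$ as in Lemma~\ref{lem:charge}, and write $F_i=f_ie_i$. Lemma~\ref{lem:scalar}(C3) applies to each component that is nonzero on a positive-measure set, so such a component is nonzero a.e. Hence if, say, $F_u$ vanishes on a positive-measure set, then $F_u=0$ a.e. and $F=F_s$ is carried by $E^s$. In that case $F\neq0$ a.e., again by (C3).

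It remains to exclude the case where both $f_s$ and $f_u$ are nonzero a.e. Consider the ratio $r:=\abs{f_s}/\abs{f_u}$. It is a $\Z^2$-periodic measurable function on $\T^2$ (by (C1) for each line), and it is positive and finite a.e. Taking moduli in (C2) for the $\mathcal N$-block gives
\[
r(S^{\mathcal N}w)=\frac{\norm{A^{(\mathcal N)}(w)|_{E^s}}}{\co\bigl(A^{(\mathcal N)}(w)|_{E^u}\bigr)}\,r(w)=D_{\mathcal N}(w)\,r(w),
\]
because restricted maps between lines have norm equal to co-norm. By Lemma~\ref{lem:domcons}(1), $\log r(S^{kN}w)-\log r(w)\le 2k\log\kappo$. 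Here I write $\log\kappo$ informally for $\log\kappa_0<0$, so this right-hand side tends to $-\infty$.

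This contradicts Poincaré recurrence. Choose $c$ with $E:=\{\abs{\log r}\le c\}$ of positive measure. The map $S^N$ preserves $\mu$, so a.e. point of $E$ returns to $E$ infinitely often under $S^{N}$. Yet $\log r(S^{kN}w)\le -c-1$ on $E$ for all large $k$, which is impossible. Ergodicity is not needed in this step; recurrence suffices. Therefore one component vanishes a.e., and the other gives the claimed carrying line.

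The main obstacle is the bookkeeping in the decomposition step. One must check that the projections onto $E^s$ along $E^u$ commute with the sewing, so that $F_s$ and $F_u$ are genuine sections. This follows because $U_j$ maps $E^s(w)$ and $E^u(w)$ to $E^s(w+e_j)$ and $E^u(w+e_j)$ respectively. One must also check that $\abs{f_i}$ descends to $\T^2$. Both are routine given unitarity, but they are where care is needed.
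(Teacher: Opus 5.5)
Your proof is correct. Its skeleton matches the paper's. You split $F$ along $E^s\oplus E^u$ and use ergodicity to show each component vanishes either a.e.\ or almost nowhere; you do this by applying Lemma~\ref{lem:scalar}(C3) to each piece, while the paper redoes that invariance argument for the zero sets of the frame coefficients $c_s,c_u$. Both then exclude the mixed case by a ratio of moduli and Poincar\'e recurrence.

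The genuine difference is how you show the ratio diverges. The paper works with one-step multipliers, $\log\abs{c_u/c_s}(S^nw)-\log\abs{c_u/c_s}(w)=\sum_{j<n}\log\abs{q_u/q_s}(S^jw)$. It gets divergence a.e.\ from the averaged Lyapunov gap of Lemma~\ref{lem:domcons}(3) via the pointwise ergodic theorem, which uses ergodicity so that the Birkhoff limit is the Haar average. You instead pass to block times $kN$ and use the pointwise bound $D_{kN}\le\kappa_0^{2k}$ of Lemma~\ref{lem:domcons}(1). This gives the deterministic estimate $\log r(S^{kN}w)\le\log r(w)+2k\log\kappa_0$ at every point. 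So you need only Poincar\'e recurrence for $S^N$, with no ergodic theorem and, as you note, no ergodicity in that step.

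What the paper's route buys in exchange is a weaker input for the final step. It consumes only $\lambda_u>\lambda_s$, so it would exclude mixed eigensections for any continuous transversal pair of invariant lines with distinct Lyapunov exponents, whereas yours uses uniform domination.

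A minor point: in your displayed bound the symbol should read $\log\kappa_0$.
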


\begin{proof}
As in Lemma~\ref{lem:scalar}(C3), $\{F=0\}$ is $\Z^2$- and $S$-invariant modulo null
(sewing; invertibility of $A$ with $\lambda\neq0$---and $\lambda=0$ would force
$F=0$ a.e.), hence null. Let $e_s,e_u$ be continuous normalized unit sections of the
two line fields (Lemma~\ref{lem:charge}) with multipliers $q_s,q_u$
(Lemma~\ref{lem:mult}), and write $F=c_se_s+c_ue_u$ with measurable coefficients
$c_s,c_u$. Inserting into \eqref{eq:eigen2} and comparing components in the frame at
$Sw$,
\[
c_s(Sw)=\lambda^{-1}q_s(w)c_s(w),\qquad c_u(Sw)=\lambda^{-1}q_u(w)c_u(w)
\qquad\text{a.e.}
\]
So $\{c_s=0\}$ and $\{c_u=0\}$ are $S$-invariant modulo null; moreover the sewing
gives $c_i(w+e_j)=\sigma^{(i)}_j(w)^{-1}c_i(w)$ a.e.\ with unimodular
$\sigma^{(i)}_j$, so both sets are $\Z^2$-invariant modulo null and descend to
$\T^2$; by ergodicity each has measure $0$ or $1$. If
$\mu\{c_u=0\}=1$, $F$ is carried by $E^s$; if $\mu\{c_s=0\}=1$, by $E^u$. Otherwise
$c_s\neq0\neq c_u$ a.e.; put $m^\sharp:=c_u/c_s$, measurable with
$\abs{m^\sharp}\in(0,\infty)$ a.e.\ and
$\abs{m^\sharp}$ descending to $\T^2$ (the automorphy factors of $c_s,c_u$ are
unimodular). Then
$\log\abs{m^\sharp(S^nw)}=\log\abs{m^\sharp(w)}+\sum_{j<n}\log\abs{q_u/q_s}(S^jw)$
for a.e.\ $w$,
and by Lemma~\ref{lem:domcons}(3) and the pointwise ergodic theorem
\cite{Walters1982} the sums tend to $+\infty$ a.e.; but by Poincar\'e recurrence
\cite{Walters1982} a.e.\ point of a positive-measure set
$\{\abs{\log\abs{m^\sharp}}\le M\}$ returns to it infinitely often---a contradiction.
\end{proof}

\begin{corollary}[operator-level statement]\label{cor:C}\leavevmode\par
Assume the strip-admissible rank-two data, dominated splitting, and Diophantine translation
of Theorem~\ref{thm:B}. Then:
\begin{enumerate}
\item every measurable eigensection of $T_A$ is carried a.e.\ by $E^s$ or by $E^u$; this
clause uses only the hypotheses of Lemma~\ref{lem:carried}, not the analytic, Diophantine,
or Chern/charge assumptions of Theorem~\ref{thm:B};
\item if $c_1(E^s)\neq0$ and $c_1(E^u)\neq0$, then $T_A$ has no measurable
eigensections at all; in particular the point spectrum of $T_A$ on $\mathcal H_U$ is
empty;
\item if exactly one of the classes vanishes, every measurable eigensection is carried
a.e.\ by the topologically trivial line.
\end{enumerate}
\end{corollary}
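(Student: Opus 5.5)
The three clauses follow from results already proved, so the plan is to assemble them in order, with Lemma~\ref{lem:carried} doing the structural work and Theorem~\ref{thm:B} supplying the obstruction.

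\emph{Clause (1).} Let $F$ be a measurable eigensection, so $F\neq0$ on a set of positive measure. The Diophantine condition $\mathrm{DC}(c,\nu)$ implies that $1,\alpha,\beta$ are rationally independent, hence $S$ is ergodic (Definition~\ref{def:DC}). Strip-admissible data restrict on $\R^2$ to continuous data as in Section~\ref{sec:setting}: the $U_j$ are unitary there, $A$ is invertible, and \eqref{eq:K1}, \eqref{eq:K2} hold. With the dominated splitting this is exactly the hypothesis set of Lemma~\ref{lem:carried}, which gives $F\in E^s$ a.e.\ or $F\in E^u$ a.e. None of this uses analyticity, the Diophantine estimate beyond ergodicity, or any charge assumption, which justifies the parenthetical remark in the statement.

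\emph{Clause (2).} Suppose $F$ is a measurable eigensection with eigenvalue $\lambda$. By (1) it is carried by some $L\in\{E^s,E^u\}$. By hypothesis $c_1(L)\neq0$, so Theorem~\ref{thm:B} applied to $L$ and this $\lambda$ forces $F=0$ a.e., a contradiction. Hence no measurable eigensection exists. Any eigenvector of $T_A$ in $\mathcal H_U$ is a measurable eigensection, because $T_AF=\lambda F$ is equivalent to \eqref{eq:eigen2} (Definition~\ref{def:cocycle}), so the point spectrum on $\mathcal H_U$ is empty.

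\emph{Clause (3).} Say $c_1(E^s)=0\neq c_1(E^u)$; the other case is symmetric. By (1) an eigensection is carried by $E^s$ or by $E^u$. Theorem~\ref{thm:B} excludes $E^u$, so it is carried by the trivial line $E^s$.

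\emph{Main point of care.} The only step needing attention is that Lemma~\ref{lem:carried} is stated for continuous data. One has to check that restricting the strip-admissible data to the real slice gives such data, and that DC supplies the ergodicity that lemma requires. Both checks are immediate from the definitions. The real work is already contained in Theorem~\ref{thm:B} and Lemma~\ref{lem:carried}.
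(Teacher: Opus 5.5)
Your proposal is correct and follows the paper's own proof, which simply combines Lemma~\ref{lem:carried} with Theorem~\ref{thm:B} applied to each line of nonzero class. Your explicit checks are exactly the hypotheses that lemma consumes: the real-slice restriction of strip-admissible data gives continuous data as in Section~\ref{sec:setting}, and $\mathrm{DC}$ gives ergodicity.
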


\begin{proof}
Combine Lemma~\ref{lem:carried} with Theorem~\ref{thm:B} applied to each line of
nonzero class.
\end{proof}

\begin{remark}[Whitney budget]\label{rem:whitney}
Write
\[
m_i:=m(E^i)=c_1(E^i)[\T^2],\qquad i\in\{s,u\},
\]
and choose normalized unit sections $e_s,e_u$ as in Lemma~\ref{lem:charge}. Since
$E^s\oplus E^u=V$, Lemma~\ref{lem:degree} and the Whitney sum formula give
$m_s+m_u=c_1(V)[\T^2]$; equivalently, this can be read off from
$G:=\det[e_s\,e_u]$, whose automorphy multiplies the two normalized cocycles. (In the
same gauge there is an analogous determinant-winding identity, the two-frequency version
of a familiar one-frequency identity; cf.\ \cite[Lemma~6.4]{AJS2014}. That
determinant-winding identity is not used later, whereas the Whitney identity is used in
Corollary~\ref{cor:D}.) Thus on a bundle with
$c_1(V)[\T^2]=1$, case (2) of Corollary~\ref{cor:C} occurs exactly when
$(m_s,m_u)\notin\{(0,1),(1,0)\}$, and case (3) otherwise.
\end{remark}

\subsection*{The dichotomy}

Theorem~\ref{thm:B}, Proposition~\ref{prop:winding} and Theorem~\ref{thm:Bex} together
determine exactly which lines carry eigensections. We first record that there are no lines
other than the two.

\begin{lemma}[the splitting exhausts the invariant lines]\label{lem:onlytwo}
Let $n=2$ and let $(U,A)$ be continuous data admitting a dominated splitting
\textup{(Definition~\ref{def:dom})} with $S$ ergodic. Then every continuous invariant line
equals $\ell_s$ or $\ell_u$.
\end{lemma}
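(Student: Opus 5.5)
Let $\ell$ be a continuous invariant line, and suppose $\ell\neq\ell_s$ and $\ell\neq\ell_u$. The plan is to derive a contradiction from the dynamics of the coordinate ratio in the splitting frame, in the spirit of the proof of Lemma~\ref{lem:carried}.

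\emph{Step 1: reduce to a line transverse to both splitting lines almost everywhere.} The coincidence sets $Z_s:=\{w:\ell(w)=\ell_s(w)\}$ and $Z_u:=\{w:\ell(w)=\ell_u(w)\}$ are closed. They are $\Z^2$-invariant, since all three fields obey the same sewing automorphy \eqref{eq:invline} with invertible $U_j$. They are also $S$-invariant, since $A(w)$ is a bijection mapping each of the three lines at $w$ to the corresponding line at $Sw$. So they descend to closed $S$-invariant subsets of $\T^2$. An ergodic translation of $\T^2$ is minimal, because every orbit is dense by rational independence. Hence each descended set is empty or all of $\T^2$, and in the latter case $\ell$ is $\ell_s$ or $\ell_u$ everywhere. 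We may therefore assume $\ell(w)$ differs from both $\ell_s(w)$ and $\ell_u(w)$ for every $w$.

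\emph{Step 2: set up the coordinate ratio.} Take continuous normalized unit sections $e_s,e_u$ (Lemma~\ref{lem:charge}) with multipliers $q_s,q_u$ (Lemma~\ref{lem:mult}). Take a continuous unit section $e$ of $\ell$, obtained as in the proof of Lemma~\ref{lem:charge} by lifting to the contractible $\R^2$. Write $e=c_se_s+c_ue_u$. By Step 1, $c_s$ and $c_u$ are nowhere zero, and the ratio $r:=c_u/c_s$ is continuous and zero-free on $\R^2$. Since $A e(w)$ spans $\ell(Sw)$, comparing components in the frame at $Sw$ gives
\[
\abs{r(Sw)}=\abs{r(w)}\,\frac{\abs{q_u(w)}}{\abs{q_s(w)}} .
\]
The sewing changes $c_s$ and $c_u$ only by unimodular factors, so $\abs r$ is $\Z^2$-periodic. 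It is therefore a continuous positive function on compact $\T^2$, bounded above and below.

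\emph{Step 3: contradict domination.} Iterating Step 2 gives
\[
\log\abs{r(S^nw)}-\log\abs{r(w)}=\sum_{j<n}\log\frac{\abs{q_u}}{\abs{q_s}}(S^jw).
\]
The left side is bounded uniformly in $n$ and $w$. For the right side, first check that $\abs{q_u}$ and $\abs{q_s}$ are the line norms $\norm{A|_{E^u}}$ and $\norm{A|_{E^s}}$. Then use multiplicativity along invariant lines together with Lemma~\ref{lem:domcons}(1): for $n=kN$ the sum equals $-\log D_{kN}(w)$, up to a comparison of $\co$ with $\norm{}$ on one-dimensional spaces, where the two coincide. This quantity is at least $k\log\kappa_0^{-2}$, which tends to $\infty$. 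That is a contradiction, so $\ell$ must equal $\ell_s$ or $\ell_u$.

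The main obstacle is Step 1's use of minimality, which upgrades ``almost everywhere distinct'' to ``everywhere distinct'' so that $\abs r$ is bounded. If that step were problematic, the fallback is the measure-theoretic argument of Lemma~\ref{lem:carried}. It works because the coincidence sets have measure $0$ or $1$, and on their complement Poincar\'e recurrence applied to $\{\abs{\log\abs r}\le M\}$ already contradicts the divergence of the Birkhoff sums. A coincidence set of full measure then equals everything by continuity, since it is closed.
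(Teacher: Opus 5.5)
Your proof is correct. It uses the same ingredients as the paper's proof but organizes the final step differently.

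The paper applies minimality only to the coincidence set with $\ell_s$. If $\ell$ avoids $\ell_s$ everywhere, it writes $\ell=\C(ae_s+be_u)$ with $\abs{a/b}\le C$. This bound needs a uniform comparison between the chart $[a:b]$ and the spherical metric, which comes from the positive transversality minimum. The paper then uses $\abs{q_s^{(\mathcal N)}/q_u^{(\mathcal N)}}=D_{\mathcal N}\le\kappa_0^{2k}$ to get $\sup_{\T^2}d_u\circ S^{\mathcal N}\le C'\kappa_0^{2k}$. Bijectivity of $S^{\mathcal N}$ then gives $d_u\equiv0$, so $\ell=\ell_u$ follows from a one-sided attraction argument rather than a contradiction.

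You apply minimality to both coincidence sets. That lets you replace the chart--metric comparison by the bare fact that $\abs r$ is a continuous zero-free function on the compact torus, hence bounded above and below. The same exact multiplicativity of $D_{kN}$ then gives the contradiction $\abs{r(S^{kN}w)}\ge\kappa_0^{-2k}\abs{r(w)}\to\infty$.

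Both routes rest on the same three facts:
\begin{enumerate}
\item descent to $\T^2$ through the common unitary sewing;
\item minimality of the ergodic translation acting on a closed invariant set;
\item exact multiplicativity along the two invariant lines.
\end{enumerate}
Your version is symmetric in $s,u$ and metric-free. The paper's invokes minimality once and is the standard cone-attraction argument, at the cost of the chart comparison. Your measure-theoretic fallback is also sound, including the remark that a closed set of full measure is all of $\T^2$, but it is not needed.
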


\begin{proof}
Let $\ell$ be a continuous invariant line and put
$d_s(w):=\operatorname{dist}(\ell(w),\ell_s(w))$,
$d_u(w):=\operatorname{dist}(\ell(w),\ell_u(w))$ in the metric \eqref{eq:metric}. All three
line fields transform under $w\mapsto w+e_j$ by the \emph{same} map $U_j(w)$, unitary at
real points and hence a spherical isometry, so $d_s,d_u$ are $\Z^2$-invariant and descend
to continuous functions on $\T^2$. (This reduction is not cosmetic: by
Remark~\ref{rem:ucfails} one may not argue on $\R^2$ by compactness.)

If $Z:=\{d_s=0\}\neq\emptyset$, then $Z$ is closed, and $S$-invariant because
$\ell(w)=\ell_s(w)$ forces $\ell(Sw)=A(w)\ell(w)=\ell_s(Sw)$. An ergodic translation of
$\T^2$ is minimal: $H:=\overline{\{n\tau:n\in\Z\}}$ is a closed subgroup. If $H$ were
proper, a nonzero character would annihilate it, giving $\nu\cdot\tau\in\Z$. Hence
$H=\T^2$ and every orbit is dense, so $Z=\T^2$ and $\ell=\ell_s$.

Otherwise $d_s\ge\varepsilon>0$ on $\T^2$ by compactness. Write
$\ell=\C\bigl(a\,e_s+b\,e_u\bigr)$ in the frame of normalized unit sections of the two
lines; then $b\neq0$ everywhere, and on $\{d_s\ge\varepsilon\}$ the chart $[a:b]$ is
uniformly comparable to the metric \eqref{eq:metric}, because the frame is uniformly
transversal: $x\mapsto\operatorname{dist}(E^s(x),E^u(x))$ is positive and continuous,
and unitary sewing makes it $\Z^2$-periodic, so it descends to the compact torus and has
a strictly positive minimum $\varepsilon_1$. In particular $\abs{a/b}\le C$ uniformly. With
$\mathcal N=kN$ and $q_s,q_u$ the multipliers of the two lines, invariance gives
\[
\ell(S^{\mathcal N}w)=\C\Bigl(a\,q_s^{(\mathcal N)}(w)\,e_s(S^{\mathcal N}w)
+b\,q_u^{(\mathcal N)}(w)\,e_u(S^{\mathcal N}w)\Bigr),\qquad
\Bigl\lvert\frac{q_s^{(\mathcal N)}}{q_u^{(\mathcal N)}}\Bigr\rvert=D_{\mathcal N}(w)
\le\kappa_0^{2k}
\]
at \emph{every} real $w$, by Lemma~\ref{lem:domcons}(1) and the multiplicativity of norms
and co-norms of maps between lines. Converting back through the same uniform comparison,
$\sup_{\T^2}d_u\circ S^{\mathcal N}\le C'\kappa_0^{2k}$; but $S^{\mathcal N}$ is a
bijection of $\T^2$ and $d_u$ descends, so $\sup_{\T^2}d_u\le C'\kappa_0^{2k}$ for every
$k$, whence $d_u\equiv0$ and $\ell=\ell_u$.
\end{proof}

\begin{theorem}[character twists and classification on a charge-free, winding-free line]
\label{thm:class}
Let $(U_1,U_2,A)$ be continuous data as in Section~\ref{sec:setting}, and let $L$ be a
continuous invariant line, of arbitrary charge, carrying a nonzero measurable eigensection
$F_0$ with eigenvalue $\lambda_0$. Then:
\begin{enumerate}
\item for every $\nu\in\Z^2$, $F_\nu:=\eu^{2\pi\ii\nu\cdot w}F_0$ is an eigensection
carried by $L$ with eigenvalue $\lambda_\nu:=\lambda_0\eu^{-2\pi\ii\nu\cdot\tau}$;
\item if, in addition, $m(L)=0$ and the hypotheses of Theorem~\ref{thm:Bex} hold, with
$F_0,\lambda_0$ as constructed there (in particular, $k(L)=0$), then conversely every
nonzero measurable eigensection $F$ carried by the line field of $L$, with eigenvalue
$\lambda$, satisfies $\lambda=\lambda_\nu$ and $F=cF_\nu$ a.e.\ for a unique $\nu\in\Z^2$
and some constant $c\neq0$.
\end{enumerate}
Under the hypotheses of part~\textup{(2)}, the set of eigenvalues realised on $L$ is the coset
$\Lambda_L=\lambda_0\{\eu^{-2\pi\ii\nu\cdot\tau}:\nu\in\Z^2\}$, a dense subset of the
circle $\abs\lambda=\eu^{\lambda_L}$; each is geometrically simple within $L$, i.e.\ the
space of measurable eigensections carried by $L$ at that eigenvalue is one-dimensional;
and every measurable eigensection carried by $L$ agrees a.e.\ with a real-analytic
nowhere-vanishing one.
\end{theorem}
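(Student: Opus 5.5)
Part (1) is a direct computation. The factor $\eu^{2\pi\ii\nu\cdot w}$ is $\Z^2$-periodic and unimodular, so $F_\nu$ satisfies the sewing relation \eqref{eq:section} whenever $F_0$ does. It is carried by $\ell$ because it is a scalar multiple of $F_0$, and $\abs{F_\nu}=\abs{F_0}$, so $F_\nu$ is nonzero on the same positive-measure set. Multiplying $A(w)F_0(w)=\lambda_0F_0(Sw)$ by $\eu^{2\pi\ii\nu\cdot w}=\eu^{-2\pi\ii\nu\cdot\tau}\eu^{2\pi\ii\nu\cdot(w+\tau)}$ gives $A(w)F_\nu(w)=\lambda_0\eu^{-2\pi\ii\nu\cdot\tau}F_\nu(Sw)$. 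No charge hypothesis is needed here.

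For part (2) I will scalarize in the real-analytic gauge $e_{\mathrm{an}}$ of Lemma~\ref{lem:analytic}, with $m=0$. Write $F_0=f_0e_{\mathrm{an}}$ with $f_0=\eu^{\Psi}$ and $\Psi$ real analytic (Theorem~\ref{thm:Bex}), and let $F=fe_{\mathrm{an}}$ be any nonzero measurable eigensection carried by $\ell$ with eigenvalue $\lambda$. By Lemma~\ref{lem:scalar}, $f$ is $\Z^2$-periodic (this is where $m=0$ enters), $f\neq0$ a.e., and $q f=\lambda f\circ S$; also $q f_0=\lambda_0 f_0\circ S$. Put $r:=f/f_0$. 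This is a measurable, a.e.\ nonzero function on $\T^2$, because $f_0$ is bounded above and below. Dividing the two scalar equations gives
\[
r(Sw)=\frac{\lambda_0}{\lambda}\,r(w)\qquad\text{a.e.}
\]
Its modulus $\abs r$ is then $S$-invariant up to the constant factor $\abs{\lambda_0/\lambda}$.

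The key step is to show $\abs{\lambda}=\abs{\lambda_0}$ and then to identify $r$ as a character. For the modulus, pick $M$ with $B:=\{\abs{\log\abs r}\le M\}$ of positive measure. Since $\log\abs{r(S^nw)}=\log\abs{r(w)}+n\log\abs{\lambda_0/\lambda}$, Poincar\'e recurrence forces $\log\abs{\lambda_0/\lambda}=0$; this is the same argument as in Lemma~\ref{lem:carried}. Then $\abs r$ is $S$-invariant, hence a.e.\ a positive constant by ergodicity, so $r/\abs r$ is a unimodular $L^2$ eigenfunction of the translation with eigenvalue $\lambda_0/\lambda$. Comparing Fourier coefficients, $\hat r_\mu(\eu^{2\pi\ii\mu\cdot\tau}-\lambda_0/\lambda)=0$ for all $\mu$, so some $\hat r_\nu\neq0$ with $\lambda_0/\lambda=\eu^{2\pi\ii\nu\cdot\tau}$. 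Rational independence makes this $\nu$ unique and kills every other coefficient. Hence $r=c\,\eu^{2\pi\ii\nu\cdot w}$, so $\lambda=\lambda_\nu$ and $F=cF_\nu$ a.e.

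The closing assertions then follow:
\begin{itemize}
\item The eigenvalue set on $L$ is exactly $\Lambda_L$: the inclusion $\supseteq$ is part (1), and $\subseteq$ is part (2).
\item The coset $\Lambda_L$ is dense in the circle $\abs\lambda=\abs{\lambda_0}=\eu^{\lambda_L}$ (Theorem~\ref{thm:Bex}), because $\{\nu\cdot\tau\}$ is dense mod $1$ by irrationality of $\alpha$.
\item Each eigenvalue is geometrically simple within $L$, since uniqueness of $\nu$ gives a one-dimensional eigenspace.
\item Every eigensection carried by $L$ is a.e.\ a real-analytic, nowhere-vanishing one: $cF_\nu=c\,\eu^{2\pi\ii\nu\cdot w}\eu^{\Psi}e_{\mathrm{an}}$.
\end{itemize}

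The main obstacle is the modulus step. No integrability is assumed on $F$, so the Fourier argument cannot be applied to $r$ directly. Recurrence is what reduces the problem to a bounded unimodular function, and I would set that step up carefully, with the nullsets handled on $\T^2$.
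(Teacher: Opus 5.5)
Your proposal is correct and follows essentially the same route as the paper. Part (1) is the direct twist computation using $\Z^2$-periodicity of $\eu^{2\pi\ii\nu\cdot w}$. Part (2) uses the quotient $r=f/f_0$ with $r\circ S=(\lambda_0/\lambda)\,r$, Poincar\'e recurrence to force $\abs\lambda=\abs{\lambda_0}$, ergodicity to make $\abs r$ constant, and a Fourier argument on the resulting unimodular eigenfunction to identify a single character.
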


\begin{proof}
(1) Since $\eu^{2\pi\ii\nu\cdot w}$ is $\Z^2$-periodic,
$(V_\nu F)(w):=\eu^{2\pi\ii\nu\cdot w}F(w)$ defines a linear bijection on measurable
sewn sections, preserves sections carried by $L$, and restricts to a unitary operator on
$\mathcal H_U$. Directly on measurable sewn sections,
\[
T_AV_\nu=\eu^{-2\pi\ii\nu\cdot\tau}V_\nu T_A,
\]
so $F_\nu=V_\nu F_0$ has eigenvalue $\lambda_\nu$.

(2) Define, before using these scalar coefficients,
\[
f_0:=\ip{e_{\mathrm{an}}}{F_0},\qquad
f_\nu(w):=\eu^{2\pi\ii\nu\cdot w}f_0(w).
\]
By Lemma~\ref{lem:scalar}, $f:=\ip{e_{\mathrm{an}}}{F}$ satisfies (C1)--(C3); $m=0$
makes $f$ and $f_0$ measurable functions on $\T^2$, with $f$ nonzero a.e.\ and $f_0$
nowhere zero. Set $r:=f/f_0$; then $r$ is measurable, nonzero a.e., and
$r(Sw)=\zeta\,r(w)$ a.e.\ with $\zeta:=\lambda_0/\lambda$.

If $\abs\zeta\neq1$, then $u:=\log\abs r$ is measurable and finite a.e.\ with
$u(S^nw)=u(w)+n\log\abs\zeta$ for every $n\ge1$ and a.e.\ $w$; choosing $M$ with
$\mu\{\abs u\le M\}>0$, Poincar\'e
recurrence returns a.e.\ point of that set to it infinitely often, contradicting
$\abs{u(S^nw)}\to\infty$ (this is the argument of Lemma~\ref{lem:carried}). So
$\abs\zeta=1$ and $\abs\lambda=\abs{\lambda_0}$.

Then $\abs r$ is measurable, $S$-invariant and descends to $\T^2$ (again $m=0$), hence
a.e.\ equal to a constant $c>0$ by ergodicity. Write $r=c\varphi$ with $\abs\varphi=1$
a.e.\ and $\varphi\circ S=\zeta\varphi$. Being bounded, $\varphi\in L^2(\T^2)$, and
comparing Fourier coefficients gives
$\hat\varphi_{\nu'}\bigl(\eu^{2\pi\ii\nu'\cdot\tau}-\zeta\bigr)=0$ for all $\nu'$. Some
coefficient is nonzero, so $\zeta=\eu^{2\pi\ii\nu\cdot\tau}$ for that $\nu$; rational
independence of $1,\alpha,\beta$ makes $\nu'\mapsto\eu^{2\pi\ii\nu'\cdot\tau}$ injective,
so exactly one mode survives and $\varphi=\hat\varphi_\nu\eu^{2\pi\ii\nu\cdot w}$. Hence
$\lambda=\lambda_0\eu^{-2\pi\ii\nu\cdot\tau}$ and $f=c\hat\varphi_\nu f_\nu$. Density of
$\{\eu^{-2\pi\ii\nu\cdot\tau}\}$ in $S^1$ follows because its closure is a closed subgroup,
finite of order $M$ only if $M\alpha,M\beta\in\Z$.
\end{proof}

\begin{theorem}[dichotomy]\label{thm:dich}\leavevmode\par
Let $(U_1,U_2,A)$ be strip-admissible with values in $U(2)$, $GL(2,\C)$, admitting a
dominated splitting $E^s\oplus E^u$, over $\tau\in\mathrm{DC}(c,\nu)$. Let $L$ be a
continuous invariant line with charge $m=c_1(L)[\T^2]$ and multiplier winding $k$.
By Lemma~\ref{lem:onlytwo}, $L\in\{E^s,E^u\}$. Then
\[
L\ \text{carries a nonzero measurable eigensection}
\quad\Longleftrightarrow\quad m=0\ \text{ and }\ k=0 ,
\]
and when both vanish the point spectrum realised on $L$ is exactly the dense coset
$\Lambda_L$ of Theorem~\ref{thm:class}.
\end{theorem}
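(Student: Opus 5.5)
The proof assembles results already established; I would organise it by the two directions and the final spectral description. First, Lemma~\ref{lem:onlytwo} gives $L\in\{E^s,E^u\}$, since DC implies ergodicity (Definition~\ref{def:DC}). This puts $L$ within the scope of Lemma~\ref{lem:analytic}, Theorem~\ref{thm:B} and Theorem~\ref{thm:Bex}. I would also note at the outset that $m$ and $k$ are intrinsic to $L$. The charge $m$ is intrinsic by Lemma~\ref{lem:charge} and Lemma~\ref{lem:degree}. The winding $k$ is gauge-independent by the computation in Lemma~\ref{lem:gauge}, so it may be computed in the real-analytic gauge $e_{\mathrm{an}}$.

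For the direction ``$\Rightarrow$'' I would argue by contraposition and split into two cases. If $m\neq0$, then $c_1(L)\neq0$ by Lemma~\ref{lem:degree}, and Theorem~\ref{thm:B} excludes any nonzero measurable eigensection for every $\lambda$. If $m=0$ but $k\neq0$, Lemma~\ref{lem:analytic} (which makes no charge assumption) supplies (H2) for $(A,\ell)$. Proposition~\ref{prop:winding} then applies verbatim: its proof reruns Theorem~\ref{thm:A} with $\chi(s)=\eu^{2\pi\ii k\cdot s}$ nonconstant. Ergodicity again comes from DC. The case $\lambda=0$ is excluded by invertibility in both cases.

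For ``$\Leftarrow$'', with $m=0$ and $k=0$, Theorem~\ref{thm:Bex} directly produces the real-analytic, nowhere-vanishing eigensection $F_0=\eu^{-(v_1+2\pi\ii v_2)}e_{\mathrm{an}}$ carried by $L$, with eigenvalue $\lambda_0$. For the spectral statement, Theorem~\ref{thm:class}(1) shows that every $\lambda_0\eu^{-2\pi\ii\nu\cdot\tau}$ is realised on $L$. Theorem~\ref{thm:class}(2), whose hypotheses are exactly those of Theorem~\ref{thm:Bex} together with $m=0$, shows that no other eigenvalue is realised on $L$. Density and the radius $\eu^{\lambda_L}$ are part of the conclusion of Theorem~\ref{thm:class}.

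The only step requiring care, and the one I expect to be the main obstacle, is consistency of conventions between the results being combined. The $k$ in the statement must be the same winding vector that enters Proposition~\ref{prop:winding} and Theorem~\ref{thm:Bex}. For $L=E^u$, the gauge comes from the inverse dynamics of Lemma~\ref{lem:inverse}, while the multiplier is formed with the forward cocycle $A$. Gauge independence of $k$ (Lemma~\ref{lem:gauge}) settles the first point. Lemma~\ref{lem:analytic}'s explicit remark that the multiplier is taken with $A$ settles the second. Once this is checked, the theorem is a direct combination of the cited results.
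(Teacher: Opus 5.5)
Your proposal is correct and follows the paper's proof essentially verbatim. For ($\Leftarrow$) you use Theorem~\ref{thm:Bex}, with Theorem~\ref{thm:class} for the description of $\Lambda_L$; for ($\Rightarrow$) you use Theorem~\ref{thm:B} when $m\neq0$, and Lemma~\ref{lem:analytic} (which is charge-free) together with Proposition~\ref{prop:winding} when $m=0$ and $k\neq0$.

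Your extra consistency checks are sound and only make explicit what the paper leaves implicit. These are the gauge independence of $k$ via Lemma~\ref{lem:gauge}, and the use of the forward multiplier in the inverse-dynamics gauge on $E^u$.
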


\begin{proof}
($\Leftarrow$) Theorem~\ref{thm:Bex}, with Theorem~\ref{thm:class} for the description.
($\Rightarrow$) If $m\neq0$, Theorem~\ref{thm:B}. If $m=0$ and $k\neq0$, then
Lemma~\ref{lem:analytic} supplies \textup{(H2)} --- it assumes nothing about the charge ---
and Proposition~\ref{prop:winding} applies.
\end{proof}

\begin{corollary}[operator level; sharpening of Corollary~\ref{cor:C}]\label{cor:D}
Under the hypotheses of Theorem~\ref{thm:dich}, write $(m_s,k_s)$, $(m_u,k_u)$ for the
invariants of the two lines. Then
\[
\sigma_{\mathrm{pt}}(T_A)
=\bigsqcup_{i\in\{s,u\}\,:\ (m_i,k_i)=(0,0)}\Lambda_{E^i},
\]
so $T_A$ has an eigenvalue if and only if at least one line has both invariants zero. All
eigenvalues are geometrically simple in $\mathcal H_U$, i.e.
$\dim\ker(T_A-\lambda)=1$ for every eigenvalue $\lambda$. Within each qualifying line,
geometric simplicity is
Theorem~\ref{thm:class}; eigenvalues contributed by distinct qualifying lines cannot
coincide because their radii differ,
$\eu^{\lambda_s}<\eu^{\lambda_u}$ \textup{(Lemma~\ref{lem:domcons}(3))}. By the
Whitney budget $m_s+m_u=c_1(V)[\T^2]$ \textup{(Remark~\ref{rem:whitney})}, a bundle with
$c_1(V)\neq0$ admits \emph{at most} one qualifying line. Hence on such a bundle
\[
\sigma_{\mathrm{pt}}(T_A)\neq\emptyset
\iff \text{one of }E^s,E^u\text{ has }m=0\ \text{\emph{and}}\ k=0 ,
\]
and if a particular line is independently known to be topologically trivial, this reduces
to testing $k=0$ on that line. The budget bounds the number of charge-free lines from
above and never produces one: see Remark~\ref{rem:notrivial}.
\end{corollary}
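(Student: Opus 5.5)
The statement is assembled from earlier results, so the plan is bookkeeping plus one new ingredient: comparing eigenvalues across the two lines.

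First step: decompose point spectrum by carrying line. Let $\lambda$ be an eigenvalue with eigensection $F\in\mathcal H_U$. By Lemma~\ref{lem:carried}, $F$ is carried a.e.\ by $E^s$ or by $E^u$. By Theorem~\ref{thm:dich}, the carrying line $E^i$ must have $(m_i,k_i)=(0,0)$. Theorem~\ref{thm:class}(2) then gives $\lambda\in\Lambda_{E^i}$. Conversely, each $\lambda\in\Lambda_{E^i}$ for a qualifying line is realised by $F_\nu=\eu^{2\pi\ii\nu\cdot w}F_0$. Here $F_0$ is the real-analytic section of Theorem~\ref{thm:Bex}, and $F_\nu$ lies in $\mathcal H_U$ because $\abs{F_\nu}=\abs{F_0}$ is bounded. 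This proves the displayed union formula. Disjointness of the union follows from Theorem~\ref{thm:class}: $\Lambda_{E^i}$ lies on the circle of radius $\eu^{\lambda_i}$. Lemma~\ref{lem:domcons}(3) gives $\lambda_u-\lambda_s>0$, so the two circles differ.

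Second step: geometric simplicity in $\mathcal H_U$. This is the step needing care. Suppose $F,G\in\ker(T_A-\lambda)$ are nonzero. Each is carried entirely by one line, by Lemma~\ref{lem:carried}. Both must be carried by the same line, since otherwise $\lambda$ would lie on two different circles. Within that line, Theorem~\ref{thm:class}(2) gives $F=cF_\nu$ and $G=c'F_{\nu'}$. Both have eigenvalue $\lambda$, and $\nu\mapsto\eu^{-2\pi\ii\nu\cdot\tau}$ is injective by rational independence, so $\nu=\nu'$ and $F,G$ are proportional.

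The one subtle point is a general element of the kernel, which need not be carried by a single line. I handle it by applying Lemma~\ref{lem:carried} to every nonzero element of the kernel. Any linear combination of $F$ and $G$ is again an eigensection with eigenvalue $\lambda$, so it is carried by one line. Moreover, the same-line argument above applies to each pair, so the whole kernel is carried by one qualifying line. Theorem~\ref{thm:class} then says that line's $\lambda$-eigenspace is one-dimensional, which gives $\dim\ker=1$.

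Third step: the bundle-level consequences. Remark~\ref{rem:whitney} gives $m_s+m_u=c_1(V)[\T^2]$. If $c_1(V)\neq0$, then $m_s=m_u=0$ is impossible, so at most one line qualifies. The equivalence $\sigma_{\mathrm{pt}}(T_A)\neq\emptyset\iff$ some line has $m=k=0$ is then read directly off the union formula. If a line is known to be trivial, its charge is zero, so only $k=0$ remains to check.

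I expect no real obstacle. The only point needing care is the second step: checking that Lemma~\ref{lem:carried} applied to every element of $\ker(T_A-\lambda)$ forces one common carrying line, before invoking per-line simplicity.
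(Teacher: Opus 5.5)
Your proposal is correct and follows the paper's proof, which combines Lemma~\ref{lem:carried}, Theorem~\ref{thm:dich} and Theorem~\ref{thm:class} with the radius comparison from Lemma~\ref{lem:domcons}(3) and the Whitney budget, exactly as you do; your explicit check that $F_\nu\in\mathcal H_U$ because $F_0$ is bounded is a detail the paper leaves implicit. The ``subtle point'' in your second step is not actually an issue: Lemma~\ref{lem:carried} applies to every nonzero measurable eigensection, so your pairwise argument already covers arbitrary nonzero $F,G\in\ker(T_A-\lambda)$, and the extra paragraph is redundant but harmless.
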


\begin{proof}
Combine Lemma~\ref{lem:carried}, Theorem~\ref{thm:dich} and Theorem~\ref{thm:class}.
\end{proof}

\begin{remark}[the Chern budget does not produce a charge-free line]\label{rem:notrivial}
On a bundle with $c_1(V)[\T^2]=1$ the identity $m_s+m_u=1$ forbids \emph{two} charge-free lines
but does not supply one: $(m_s,m_u)=(-1,2)$ is equally admissible, and then no invariant
line is topologically trivial. The distinction is not academic. Take
\[
U_1\equiv I,\qquad
U_2(w)=\diag\bigl(\eu^{-4\pi\ii w_1},\ \eu^{2\pi\ii w_1}\bigr),\qquad
A(w)=\diag\bigl(t\,\eu^{-4\pi\ii\alpha w_2},\ \eu^{2\pi\ii\alpha w_2}\bigr),\quad t>1 .
\]
These are entire, strip-admissible and equivariant, with dominated splitting
$E^u=\C(1,0)$, $E^s=\C(0,1)$ and $D_1=1/t$; and $c_1(V)[\T^2]=1$. But \eqref{eq:K3} gives
$m(E^u)=2$ and $m(E^s)=-1$, so \emph{neither} line is topologically trivial, while
$p\equiv1$ and hence $k=0$ on both. Theorem~\ref{thm:dich} correctly returns an empty
point spectrum. The moral for Corollary~\ref{cor:D} and for the table of
Section~\ref{sec:zak}: that a charge-free line exists must be \emph{verified}, never
inferred from $c_1(V)$.
\end{remark}

\subsection*{Examples and limitations}

The following elementary examples delimit the theorem. All live on sewing systems with
$U_1\equiv I$ and diagonal $U_2$, for which \eqref{eq:K1} is immediate, and use
$\tau$ ergodic (for Example~\ref{ex:dominated}, Diophantine if Theorem~\ref{thm:B} is
to be quoted; its conclusion there can also be checked by hand).

\begin{example}[elliptic, unitary, rich point spectrum on a nontrivial bundle]
\label{ex:elliptic}
Take $n=2$, $U_2(w)=\diag(1,\eu^{-2\pi\ii w_1})$, so that $V=L_0\oplus L_1$ with
$c_1(L_0)[\T^2]=0$, $c_1(L_1)[\T^2]=1$, $c_1(V)[\T^2]=1$ (the topological type of the vector-Zak bundle
of Section~\ref{sec:zak}). The cocycle $C(w):=\diag\bigl(1,\ \eu^{-2\pi\ii\alpha
w_2}\bigr)$ is entire, bounded on every strip, unitary at real points, equivariant
(both diagonal entries satisfy the required quasi-periodicity: the first is periodic,
the second obeys $c(w+e_2)=\eu^{-2\pi\ii\alpha}c(w)$), and both Lyapunov exponents
vanish. The sections $F_{\nu}(w)=(\eu^{2\pi\ii\nu\cdot w},0)$, $\nu\in\Z^2$, are
analytic eigensections carried by $L_0$ with eigenvalues $\eu^{-2\pi\ii\nu\cdot\tau}$,
a dense subgroup of the circle. Hence: \emph{bundle-level} topology
($c_1(V)\neq0$), unitarity, and any arithmetic hypothesis on $\tau$ do NOT obstruct
point spectrum. The obstruction of Theorem~\ref{thm:A} is irreducibly line-resolved.
\end{example}

\begin{example}[dominated, analytic, point spectrum on the trivial line only]
\label{ex:dominated}
On the same bundle take $A_t(w):=\diag\bigl(t,\ \eu^{-2\pi\ii\alpha w_2}\bigr)$ with
$t>1$. This is strip-admissible, with dominated splitting $E^u=L_0$, $E^s=L_1$
($D_1=1/t$), and its two Lyapunov exponents are $\log t$ and $0$, hence their gap is
$\log t>0$. The eigensections
$F_\nu=(\eu^{2\pi\ii\nu\cdot w},0)$ persist, with eigenvalues
$t\,\eu^{-2\pi\ii\nu\cdot\tau}$, all carried by the trivial line $E^u$. On
$E^s=L_1$ ($c_1(E^s)[\T^2]=1$) the normalized data are computable by hand: $e=(0,1)$,
$q(w)=\eu^{-2\pi\ii\alpha w_2}$, hence $p\equiv1$, $k=0$, $g=0$, and (H2) is trivial;
Theorem~\ref{thm:A} excludes eigensections on $E^s$ (for Diophantine $\tau$ this is
also an instance of Theorem~\ref{thm:B}). The theorem is thus sharp as a
\emph{line-resolved} statement: eigensections do exist, but only on the trivial line.
\end{example}

\begin{example}[both lines nontrivial: empty point spectrum]\label{ex:both}
Take
\[
U_2(w)=\diag\bigl(\eu^{-2\pi\ii w_1},\ \eu^{2\pi\ii w_1}\bigr),\qquad
A(w)=\diag\bigl(2\,\eu^{-2\pi\ii\alpha w_2},\ \eu^{2\pi\ii\alpha w_2}\bigr),
\]
so that $V\cong L_1\oplus L_{-1}$ with $c_1(V)[\T^2]=0$; the data are
strip-admissible, dominated ($D_1=1/2$), with $c_1(E^u)[\T^2]=1$ and
$c_1(E^s)[\T^2]=-1$. In both
lines $p\equiv1$, $k=0$, $g=0$, so (H2) is trivial and Theorem~\ref{thm:A} applies to
each; by Lemma~\ref{lem:carried}, $T_A$ has \emph{no} measurable eigensections, for
every ergodic $\tau$ (no Diophantine condition needed here). This shows
Corollary~\ref{cor:C}(2) is not vacuous, and that the mechanism can force empty point
spectrum even on a topologically trivial ambient bundle.
\end{example}

\begin{remark}[vector-Zak equal-exponent construction]
An equal-exponent unitary example with the features of Example~\ref{ex:elliptic} can
also be built on the vector-Zak sewing itself (with its non-diagonal $U_2$), using the
explicit smooth unit section of \cite{FPVV2026}. Since Example~\ref{ex:elliptic} makes
the same point self-containedly, we do not use this auxiliary construction.
\end{remark}

\subsection*{Sharpness: the Diophantine hypothesis cannot be removed}

Theorem~\ref{thm:dich} is a statement about the strip-admissible dominated regime over
$\tau\in\mathrm{DC}(c,\nu)$. Within it, vanishing of the two invariants is necessary and
sufficient. It is not sufficient in general: the following shows that
$\mathrm{DC}$ genuinely enters the existence direction, and --- consistently with
Remark~\ref{rem:asym} --- that what fails is the \emph{modulus} equation, not
\textup{(H2)}.

\begin{theorem}[$\mathrm{DC}$ is not removable]\label{thm:sharp}
There is a continuous unitary sewing system on $\T^2$ with $c_1(V)[\T^2]=1$, a strip-admissible
analytic cocycle $A$ with values in $GL(2,\C)$ admitting a dominated splitting, and an
ergodic (Liouville) $\tau$, such that $L=E^u$ has $m(L)=0$ and $k(L)=0$ and satisfies
\textup{(H2)} trivially, yet $L$ carries no nonzero measurable eigensection.
\end{theorem}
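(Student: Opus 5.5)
The plan is to realise the counterexample inside the diagonal framework of Examples~\ref{ex:elliptic}--\ref{ex:dominated}, so that charge, winding, (H2) and domination can all be read off by hand. The only freedom we use is the modulus of the multiplier on the trivial line.

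\emph{The data.} Take $U_1\equiv I$ and $U_2(w)=\diag(1,\eu^{-2\pi\ii w_1})$, so $V=L_0\oplus L_1$ and $c_1(V)[\T^2]=1$. Set
\[
A(w)=\diag\bigl(t\,\eu^{a(w)},\ \eu^{-2\pi\ii\alpha w_2}\bigr),
\]
where $a\colon\T^2\to\R$ is real analytic with $\langle a\rangle=0$, and $t>\sup\eu^{-a}$ is chosen so that $D_1=\sup 1/(t\eu^{a})<1$. Then $A$ is strip-admissible and equivariant, with dominated splitting $E^u=L_0$ and $E^s=L_1$. On $E^u$ the normalized section is $e=(1,0)$, so $m=0$ and $q=t\eu^{a}>0$. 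Hence $p\equiv1$, $k=0$ and $g=0$, and (H2) holds trivially.

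\emph{Reduction to a modulus coboundary.} Suppose $F$ is a nonzero measurable eigensection carried by $E^u$ with eigenvalue $\lambda$. By Lemma~\ref{lem:scalar}, $f=\ip eF$ is periodic, $f\neq0$ a.e., and $t\eu^{a}f=\lambda f\circ S$. Taking logarithms of moduli, $u:=\log\abs f$ is measurable, finite a.e., and satisfies
\[
u\circ S-u=a-c,\qquad c:=\log(\abs\lambda/t).
\]
Dividing the iterated identity by $n$ gives $(u\circ S^n-u)/n\to0$ in measure, because $u\circ S^n$ has the same distribution as $u$. Birkhoff's theorem gives $\frac1n\sum_{j<n}a\circ S^j\to\langle a\rangle=0$, so $c=0$. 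It therefore suffices to build a Liouville $\tau$ and a real-analytic mean-zero $a$ that is \emph{not} a measurable coboundary. The phase part imposes nothing, which matches Remark~\ref{rem:asym}.

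\emph{Construction of $\tau$ and $a$ (the main step).} We build $\tau$ and integer sequences $\nu_j\in\Z^2$ and $n_j\in\N$ inductively. We arrange $\delta_j:=\abs{1-\eu^{2\pi\ii\nu_j\cdot\tau}}$ to be super-exponentially small, while $\norm{n_j\nu_j\cdot\tau}_{\R/\Z}\approx\tfrac12$ and $\norm{n_j\nu_i\cdot\tau}_{\R/\Z}$ is tiny for all $i<j$. This is achievable by successive perturbation of $\tau$ within nested intervals; the limit must keep $1,\alpha,\beta$ rationally independent, which we enforce at each stage. We then set
\[
a:=\sum_j c_j\bigl(\phi_j-\phi_j\circ S\bigr),\qquad \phi_j(w)=\cos(2\pi\nu_j\cdot w),
\]
with $c_j\to\infty$ but $c_j\delta_j\le\eu^{-\abs{\nu_j}}$. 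The Fourier coefficients of $a$ then decay exponentially, so $a$ is real analytic with mean zero.

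\emph{The contradiction and the main obstacle.} Formally, for every $n$,
\[
\sum_{i<n}a\circ S^i=\sum_i c_i(\phi_i-\phi_i\circ S^n).
\]
At $n=n_j$ the $j$-th term equals roughly $2c_j\phi_j$, which has modulus $\gtrsim c_j$ on a set of measure bounded below. This has to be compared with the measurable-solution side: $u\circ S^{n_j}-u$ is tight in distribution. The hard part is controlling the remaining terms, so the decisive quantitative bookkeeping is in the choice of $n_j$:
\begin{itemize}
\item for $i<j$ the terms are small because $n_j\nu_i\cdot\tau\approx0$;
\item for $i>j$ the terms are small because they are bounded by $c_in_j\delta_i$, which is small provided $\delta_i$ is chosen after $n_j$.
\end{itemize}
Granting this, $\sum_{i<n_j}a\circ S^i$ is not tight, which contradicts $u\circ S^{n_j}-u$ being tight. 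So no eigensection exists on $E^u$, although $m=k=0$ and (H2) holds there.
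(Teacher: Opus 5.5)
Your proposal is essentially the paper's proof. It uses the same diagonal data on $V=L_0\oplus L_1$, with a real-analytic mean-zero fluctuation in the modulus of the $L_0$-entry. It makes the same reduction: tightness of $u\circ S^n-u$ together with Birkhoff's theorem forces $\abs\lambda=t$, after which the Birkhoff sums of the fluctuation would have to be tight in measure. It reaches the same contradiction, through non-tight Birkhoff sums at times $n_j\sim1/\norm{\nu_j\cdot\tau}_{\R/\Z}$ over a Liouville frequency.

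Writing $a$ directly as the formal coboundary $\sum_jc_j(\phi_j-\phi_j\circ S)$ is a tidy variant of the paper's $r=\sum_j\eu^{-Q_j}\cos(2\pi Q_jw_1)$. That function has the same form, with phase-shifted $\phi_j$ and $c_j$ comparable to $B_j$. In your version the Birkhoff sums telescope exactly, so no Dirichlet-kernel estimates are needed. Moreover, your $c_j$ are free parameters rather than the tail-dependent ratios $B_j=\varepsilon_j/\delta_j$, so the paper's envelope bookkeeping with $M_i$ disappears.

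The one step you leave unargued is the control of the modes $i<j$. You require $\norm{n_j\nu_i\cdot\tau}_{\R/\Z}$ to be tiny for all $i<j$, simultaneously with $\norm{n_j\nu_j\cdot\tau}_{\R/\Z}\approx\tfrac12$, and you only assert that nested intervals achieve this. It can be done. Take all $\nu_i=(Q_i,0)$, and at stage $j$:
\begin{itemize}
\item pick $P/R$ inside the current interval with $R\gg Q_i$;
\item set $\nu_j=(R,0)$ and $\alpha=P/R+1/(2kR^2)$ with $k$ huge;
\item set $n_j=kR$.
\end{itemize}
Then $\norm{n_j\nu_j\cdot\tau}_{\R/\Z}=\tfrac12$ and $\norm{n_j\nu_i\cdot\tau}_{\R/\Z}\le Q_i/(2R)$.

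But this step is unnecessary. Simply impose $c_j\ge100j\bigl(1+\sum_{i<j}c_i\bigr)$ and use the trivial bound $\abs{\phi_i-\phi_i\circ S^n}\le2$. This is exactly how the paper disposes of the earlier blocks, via $B_j>100j\bigl(1+\sum_{i<j}B_i\bigr)$ and $\abs{D_n(x)}\le(2\norm x_{\R/\Z})^{-1}$. With that change, the remaining requirements are strict inequalities, hence open conditions on $\tau$ that survive all later nested choices:
\begin{itemize}
\item $c_j\delta_j<\eu^{-\abs{\nu_j}}$;
\item $n_j\sum_{i>j}c_i\delta_i$ small;
\item $\norm{n_j\nu_j\cdot\tau}_{\R/\Z}$ bounded away from $0$.
\end{itemize}
Your argument is then complete.
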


\begin{proof}
Take the sewing of Examples~\ref{ex:elliptic}--\ref{ex:dominated}: $U_1\equiv I$,
$U_2(w)=\diag(1,\eu^{-2\pi\ii w_1})$, so $V=L_0\oplus L_1$ and
$c_1(V)[\T^2]=1$. Let
$r\colon\T^2\to\R$ be real analytic, depending on $w_1$ only, with $\langle r\rangle=0$,
and set
\[
A(w)=\diag\bigl(t\,\eu^{r(w)},\ \eu^{-2\pi\ii\alpha w_2}\bigr),\qquad
t>\eu^{\norm r_\infty}.
\]
Equivariance \eqref{eq:K2} holds (the first diagonal entry must be $\Z^2$-periodic, the
second must satisfy $a_2(w+e_2)=\eu^{-2\pi\ii\alpha}a_2(w)$); the data are
strip-admissible; and $D_1=\eu^{-r}/t<1$, so $E^u=L_0$ and $E^s=L_1$. On $L_0$ the section
$e=(1,0)$ satisfies \eqref{eq:K3} with $m=0$, and $q=t\eu^{r}>0$, so $p\equiv1$, $k=0$,
$g\equiv0$: \textup{(H2)} is trivially true.

The parameters $\alpha,\beta$ and the function $r$ are chosen below; fix those ultimate
choices. They make $1,\alpha,\beta$ rationally independent, so the resulting translation
$S$ is ergodic before the pointwise ergodic theorem is invoked.

Suppose $L_0$ carried a nonzero measurable eigensection with eigenvalue $\lambda$. By
Lemma~\ref{lem:scalar} there is a measurable $\Z^2$-periodic $f\neq0$ a.e.\ with
$qf=\lambda f\circ S$, so $u:=\log\abs f$ is measurable and finite a.e.; no assumption
that $u\in L^1$ is made or needed. After enlarging the exceptional null set by all of its
forward and backward $S$-iterates, work on one $S$-invariant conull set on which
\[
u\circ S^n-u=nc+\sum_{j<n}r\circ S^j\qquad(n\ge1),
\qquad c:=\log(t/\abs\lambda).
\]
For any measurable $u$ the family $\{u\circ S^n-u\}_{n\ge1}$ is tight in measure, since
$u\circ S^n$ and $u$ are equidistributed:
$\mu\{\abs{u\circ S^n-u}>2M\}\le2\mu\{\abs u>M\}$. Thus, writing
$X_n:=u\circ S^n-u$, tightness implies $X_n/n\to0$ in measure: for any $\eta>0$, choose
$M$ so that $\sup_n\mu\{\abs{X_n}>M\}<\eta$, and then take $n$ with $M/n<\eta$.
Since $r\in L^1$ has zero mean and $S$ is ergodic,
the pointwise ergodic theorem \cite{Walters1982} gives
$\tfrac1n\sum_{j<n}r\circ S^j\to0$ a.e., so the displayed iterated identity gives
$X_n/n\to c$ a.e.\ and hence in measure. Uniqueness of limits in measure forces $c=0$.
Hence the Birkhoff sums
$S_nr:=\sum_{j<n}r\circ S^j=u\circ S^n-u$ would be tight in measure.

It remains to choose $r$ and $\alpha$ for which they are not. Let $p_j/Q_j$ denote the
convergents of a continued fraction for $\alpha$, and put $\varepsilon_j:=\eu^{-Q_j}$.
Define
\[
r(w)=\sum_{j\ge1}\varepsilon_j\cos(2\pi Q_jw_1).
\]
Then $r$ extends holomorphically and boundedly to
$\abs{\operatorname{Im}w_1}<\rho$ for every $\rho<1/2\pi$, so it is real analytic and the
data are strip-admissible. Put
\[
\delta_j:=\norm{Q_j\alpha}_{\R/\Z},\qquad
B_j:=\frac{\varepsilon_j}{\delta_j}.
\]
The standard convergent estimates give
\[
\frac1{Q_{j+1}+Q_j}<\delta_j<\frac1{Q_{j+1}},
\qquad
\varepsilon_jQ_{j+1}<B_j<\varepsilon_j(Q_{j+1}+Q_j).
\]
Once $Q_{i+1}$ has been fixed, define the tail-independent envelope
\[
M_i:=\varepsilon_i(Q_{i+1}+Q_i).
\]
Thus $B_i<M_i$ for every later completion of the continued-fraction tail. At stage $j$,
$Q_j$ and the earlier envelopes are already fixed, while the next partial quotient can
make $Q_{j+1}$ arbitrarily large. Choose it so that
\begin{equation}\label{eq:liougrowth}
\text{(a)}\ \ Q_{j+1}\ge Q_j+5,
\qquad\qquad
\text{(b)}\ \ \varepsilon_jQ_{j+1}
>100\,j\Bigl(1+\sum_{i<j}M_i\Bigr) .
\end{equation}
After fixing $Q_{j+1}$, define $M_j$ as above. The lower bound for $B_j$ and the envelopes
for the earlier terms give
\[
B_j>100\,j\Bigl(1+\sum_{i<j}M_i\Bigr)
>100\,j\Bigl(1+\sum_{i<j}B_i\Bigr).
\]
Moreover every inequality imposed at an earlier stage persists under every later tail
choice. Condition~\textup{(b)} at stage $i$ is tail-independent once $Q_{i+1}$ is fixed;
the derived inequality for $B_i$ persists because $B_i$ stays above the fixed lower
bound $\varepsilon_iQ_{i+1}$, while each earlier $B_h$ on its right stays below its
already fixed envelope $M_h$. This is the required persistence mechanism.

In particular $B_j>100j$, so $B_j\to\infty$, and
$\delta_j=\varepsilon_j/B_j=o(Q_j^{-N})$ for every fixed $N$. Hence $\alpha$ is
Liouville, since for the convergent $p_j/Q_j$,
\[
\Bigl\lvert\alpha-\frac{p_j}{Q_j}\Bigr\rvert=\frac{\delta_j}{Q_j}
< Q_j^{-n}
\]
for every fixed $n$ and all large $j$. For $\nu'_j=(Q_j,0)$,
\[
\norm{\nu'_j\cdot\tau}_{\R/\Z}=\delta_j=o(Q_j^{-N})
\]
for every fixed $N$, so these frequencies violate every $\mathrm{DC}(c,\nu)$ condition.
Finally choose $\beta$ outside the rational span of $1,\alpha$; then
$1,\alpha,\beta$ are rationally independent and the translation $S$ is ergodic. Write
$D_n(x)=\sum_{i<n}\eu^{2\pi\ii ix}$, so that
\[
S_nr(w)=\sum_i\varepsilon_i\operatorname{Re}
\bigl[\eu^{2\pi\ii Q_iw_1}D_n(Q_i\alpha)\bigr],
\]
and recall $\abs{D_n(x)}=\abs{\sin\pi nx}/\abs{\sin\pi x}$ and
$\abs{D_n(x)}\le\min\{n,(2\norm x_{\R/\Z})^{-1}\}$. With
$n_j:=\lfloor1/(4\delta_j)\rfloor$ one has $n_j\delta_j\le\tfrac14$ always, and
$n_j\delta_j>\tfrac14-\delta_j\ge\tfrac18$ once $\delta_j\le\tfrac18$. Since
$\abs{\sin\pi x}\le\pi\abs x$ and $\sin$ increases on $[0,\pi/4]$,
\begin{equation}\label{eq:mainterm}
\varepsilon_j\abs{D_{n_j}(Q_j\alpha)}
=B_j\,\delta_j\,\frac{\abs{\sin\pi n_j\delta_j}}{\abs{\sin\pi\delta_j}}
\ \ge\ B_j\,\frac{\sin(\pi/8)}{\pi}\ >\ \frac{B_j}{10},
\end{equation}
because $\sin(\pi/8)/\pi=0.1218\ldots>\tfrac1{10}$. \emph{The weaker constant is the
correct one here: the interval $n_j\delta_j\in[\tfrac18,\tfrac14]$ does not yield
$B_j/5$.} For the other two blocks,
\[
\sum_{i<j}\varepsilon_i\abs{D_{n_j}(Q_i\alpha)}\le\tfrac12\sum_{i<j}B_i
\le\tfrac{B_j}{200j}\le\tfrac{B_j}{200},
\qquad
\sum_{i>j}\varepsilon_i n_j\le\tfrac{\varepsilon_{j+1}}{2\delta_j}\le\tfrac{B_j}{200},
\]
the first using $\abs{D_n(x)}\le(2\norm x_{\R/\Z})^{-1}$ together with the consequence
of \eqref{eq:liougrowth}(b) just proved, the second $\abs{D_n}\le n$ together with
$\varepsilon_{j+1}\le\eu^{-5}\varepsilon_j<\varepsilon_j/100$, which is
\eqref{eq:liougrowth}(a). So $S_{n_j}r$ is a single cosine
$a_j\cos(2\pi Q_jw_1+\theta_j)$ with $a_j>B_j/10$, plus an error of modulus at most
$B_j/100$. The set where $\abs{\cos(2\pi Q_jw_1+\theta_j)}\ge\tfrac12$ has measure
$\tfrac23$ (the integrand is $1/Q_j$-periodic in $w_1$ and $Q_j\in\N$), and there
\[
\abs{S_{n_j}r}\ \ge\ \frac{B_j}{20}-\frac{B_j}{100}\ =\ \frac{B_j}{25}\ >\ \frac{B_j}{50}.
\]
Hence $\mu\{\abs{S_{n_j}r}\ge B_j/50\}\ge\tfrac23$ for all large $j$, while
$B_j\to\infty$. The family $\{S_nr\}$ is therefore not tight, no such $u$ exists, and
$L_0$ carries no measurable eigensection.
\end{proof}

\begin{remark}\label{rem:sharpmoral}
The obstruction in Theorem~\ref{thm:sharp} is arithmetic, not topological, and it lives
entirely in the modulus: the \emph{mean} of $\log\abs q$ obstructs nothing --- it merely
determines $\abs\lambda=\eu^{\lambda_L}$, as the preceding tightness argument shows is
forced in this example ---
whereas its \emph{fluctuation} carries the extra hypothesis that
Remark~\ref{rem:asym} isolates. The example also complements
the concluding open problem from the opposite side: it exhibits data satisfying \textup{(H2)}
whose charge-free line still carries nothing.
\end{remark}

\section{Vector-Zak bundles and time-frequency motivation}\label{sec:zak}

This section records the setting that motivated the theorems. It contains no new results
about linear independence of time-frequency systems.

For $z=(x,\omega)\in\R^2$ let $\pi(z)f(t)=\eu^{2\pi\ii\omega(t-x/2)}f(t-x)$ be the Weyl
time-frequency shift on $L^2(\R)$. Heil, Ramanathan and Topiwala conjectured
\cite{HRT1996} that every finite set of time-frequency shifts of a nonzero $f\in
L^2(\R)$ is linearly independent; the conjecture is true for lattice subsets
\cite{Linnell1999}, for all four-point sets with real-valued generators
\cite{GuanOkoudjou2026}, for four-point sets with ultimately positive generators
\cite{MalikiosisPoursalidis2025}, and for a Lean-certified family of four-point sets
with symplectic covolume exceeding one \cite{Oussa2026}. In a recent counterexample
preprint, Faulhuber, Petersen, van~Velthoven and Voigtlaender construct twelve
time-frequency shifts of a nonzero Schwartz function with a linear dependence
\cite{FPVV2026}. Oussa gives a computer-assisted four-point construction over the same
vector-Zak translation $\tau^*$ used below \cite{Oussa4pt2026}; Dai, Deng, Shi, Wu and
Yang independently give a four-point construction over a different, biquadratic base
translation \cite{DDSWY2026}. The three cited versions are recent and unrefereed at this
writing.

The mechanism of \cite{FPVV2026} is a \emph{vector-valued Zak transform}: for
configurations in (a translate of) the lattice $\Z\times\tfrac12\Z$, the dependency
equation for a finite Weyl polynomial becomes an eigensection equation
\[
B(z)\,F(Tz)=\lambda\,F(z),\qquad Tz=z-\tau,
\]
for a $2\times2$ matrix symbol $B$ acting on \emph{vector-Zak functions}: measurable
$F\colon\R^2\to\C^2$ with the sewing automorphy
\[
F(z+e_1)=U_1(\omega)F(z),\qquad
F(z+e_2)=U_2F(z),
\]
with
\[
U_1(\omega)=\eu^{\pi\ii\omega}\diag(1,-1),\qquad
U_2=\begin{pmatrix}0&1\\1&0\end{pmatrix},
\]
cf.\ \cite[\S3, (3.16)--(3.17)]{FPVV2026}; the substitution $w:=Tz$,
$A(w):=B(Sw)$ converts this into the $S$-picture \eqref{eq:eigen2} used here. The pair
$(U_1,U_2)$ is a continuous (entire, strip-bounded) unitary sewing system in the sense
of Definition~\ref{def:sewing}. Its determinant multipliers are
$\det U_1=-\eu^{2\pi\ii\omega}$, $\det U_2=-1$, and the computation of
Lemma~\ref{lem:degree} (applied to the determinant bundle, with the mirror-normalized
connection form $-2\pi\ii w_1dw_2$) gives
\[
c_1(V)[\T^2]=1 .
\]
For the counterexample constructions in \cite{FPVV2026,Oussa4pt2026} the base
translation is $\tau^*=(2^{1/3}-1,\,2^{2/3}-1)$, which satisfies
$\mathrm{DC}(1/14,2)$ by Lemma~\ref{lem:cubic}; the construction of
\cite{DDSWY2026} instead runs over
$\bigl(\tfrac13+10^{-12}\sqrt2,\ \tfrac13+10^{-12}\sqrt3\bigr)$, which is
Diophantine of exponent $3$ by a norm-form bound in $\Q(\sqrt2,\sqrt3)$
\cite[Lemma~5.1]{DDSWY2026}, hence also in the class of
Definition~\ref{def:DC}. In all three papers the symbols are finite sums of
exponentials $c\,\eu^{\ii a\cdot z}$ with real frequency vectors $a$---entire and
bounded on every strip, hence strip-admissible together with the sewing whenever
$\det$ is zero-free on the real slice (``off-band'' symbols).

In this setting our results say: \emph{for an off-band symbol with a dominated
splitting over $\tau^*$, every measurable eigensection of the transfer operator---%
equivalently, every vector-Zak solution of the dependency equation---is carried,
almost everywhere, by a topologically trivial invariant line} (Theorem~\ref{thm:B} and
Corollary~\ref{cor:C}; by $m_s+m_u=c_1(V)[\T^2]=1$, at most one of the two lines can be
trivial). When neither line is trivial, the transfer operator has empty point
spectrum. We emphasize what is \emph{not} claimed: nothing here decides linear
independence of any time-frequency configuration---dependencies can and do occur
through trivializable carrying lines (this is precisely how the eigensections behind
the counterexamples \cite{FPVV2026,Oussa4pt2026,DDSWY2026} arise; in
\cite{DDSWY2026} the triviality of the carrying line is explicitly
certified), the correspondence between
dependencies and eigensections involves further identifications for which we refer
to \cite{FPVV2026}, and the elliptic and non-dominated regimes are untouched.
The contribution of the present paper to this picture is the topological and
cohomological confinement of possible carrying lines, and---by
Theorem~\ref{thm:dich}---the fact that this confinement is exact: on an off-band
dominated symbol over $\tau^*$, an invariant line carries an eigensection precisely when
its Chern class and its multiplier winding both vanish.

\paragraph{Scope of the comparison with recent HRT work.}
The recent counterexample preprints give three constructive results:
Faulhuber--Petersen--van Velthoven--Voigtlaender construct a twelve-point linear
dependence from a smooth vector-Zak eigensection and exact coefficient data
\cite{FPVV2026}; Oussa constructs a four-point counterexample with a validated
certificate \cite{Oussa4pt2026}; and Dai--Deng--Shi--Wu--Yang independently construct
a four-point counterexample using an interval-certified topologically trivial dominated
line and a smooth eigensection \cite{DDSWY2026}. These conclusions rest on the proofs
and computational certificates in the cited works; the present manuscript neither
reproduces nor independently certifies them. The cited HRT counterexample and
linear-independence results remain external, and no Zak/HRT correspondence theorem is
formalized here. These existence constructions are mathematically different from the
obstruction and classification results proved in this paper.

\begin{openproblem}
Does there exist a continuous (or smooth, or analytic-with-Liouville-$\tau$) example
satisfying all hypotheses of Theorem~\ref{thm:A} except \textup{(H2)}, whose charged
line \emph{does} carry a nonzero measurable eigensection? By
Theorem~\ref{thm:B} such an example requires failure of domination, of
strip-admissibility, or of $\mathrm{DC}$. In the one-frequency, charge-free analogue,
degree-nonzero $C^1$ circle cocycles never admit such solutions for any irrational
frequency \cite{GLL1991,ILR1993}, so genuinely two-frequency or low-regularity
mechanisms would be needed. Theorem~\ref{thm:sharp} answers the mirror image of this
question: it exhibits analytic dominated data over a Liouville $\tau$ satisfying
\textup{(H2)} whose charge-free, winding-free line carries no eigensection, the failure
being in the modulus equation rather than in \textup{(H2)}.
\end{openproblem}

\section{Disclosure and acknowledgments}\label{sec:disclosure}

The author thanks Vignon Oussa for helpful discussions concerning his work on HRT
counterexamples. The author also thanks Javad Mashreghi for organizing IWOTA 2026---the
International Workshop on Operator Theory and its Applications, held at Universit\'{e}
Laval in Qu\'{e}bec City---and for the opportunity to discuss these ideas during the
meeting.

\subsection*{Scope of the Lean formalization}\label{sec:formalization}

Selected results have been formalized and kernel-checked in Lean. The conventional
first-Chern-class-to-charge identification and the Stein/Cousin-II input remain external,
the cited HRT counterexamples and the Zak/HRT correspondence lie outside the scope of the
Lean development, and no claim of complete formal verification is made. The private Lean
source and exact dependency record are available from the author upon reasonable request.

\section*{Declaration of generative AI and AI-assisted technologies in the manuscript
preparation process}

During the preparation of this work, the author used AI tools to assist with language
refinement, consistency checking, literature organization, and the development and review
of portions of the Lean formalization. The author reviewed and edited all resulting material
and takes full responsibility for the content of the article.

\begingroup
\raggedright
\bibliographystyle{amsplain}
\bibliography{references}
\endgroup

\medskip
\noindent\textsc{Ahmadreza Azimifard}\\
\noindent Harmonic Research \& Technologies, LLC.\\
\noindent New York, NY, USA\\
\noindent\textit{Email:} afard@harmonicrt.com

\end{document}